\setlist[1]{leftmargin=*}
\setlist[enumerate,1]{label=(\alph*)}
\setlist[enumerate,2]{label=\normalfont{(\roman*)}, ref=\normalfont{(\alph{enumi}.\roman*)}}
\newcommand{\A}{\mathbb{A}}
\newcommand{\C}{\mathbb{C}}
\newcommand{\D}{\mathbb{D}}
\renewcommand{\P}{\mathbb{P}}
\newcommand{\R}{\mathbb{R}}
\newcommand{\Z}{\mathbb{Z}}
\newcommand{\cA}{\mathcal{A}}
\newcommand{\cF}{\mathcal{F}}
\newcommand{\cH}{\mathcal{H}}
\newcommand{\cM}{\mathcal{M}}
\newcommand{\cO}{\mathcal{O}}
\renewcommand{\ll}{\mathscr{l}}
\newcommand{\kk}{\mathscr{k}}
\renewcommand{\epsilon}{\varepsilon}
\renewcommand{\phi}{\varphi}
\renewcommand{\theta}{\vartheta}
\renewcommand{\tilde}{\widetilde}
\renewcommand{\hat}{\widehat}
\renewcommand{\bar}{\overline}
\renewcommand{\leq}{\leqslant}
\renewcommand{\geq}{\geqslant}
\renewcommand{\to}{\longrightarrow}
\newcommand{\into}{\hookrightarrow}
\newcommand{\de}{\coloneqq} %:=
\newcommand{\Spec}{\operatorname{Spec}}
\newcommand{\Aut}{\operatorname{Aut}}
\newcommand{\Autalg}{\mathrm{Aut}_{\mathrm{alg}}}
\newcommand{\Authol}{\mathrm{Aut}_{\mathrm{hol}}}
\newcommand{\End}{\operatorname{End}}
\newcommand{\Aff}{\operatorname{Aff}}
\newcommand{\Int}{\operatorname{Int}} %interior
\newcommand{\sspan}{\operatorname{span}}
\newcommand{\Nn}{\Z_{\geq 0}}
\renewcommand{\d}{\partial}
\newcommand{\id}{\mathrm{id}}
\newcommand{\Gl}{\mathrm{GL}}
\theoremstyle{plain}
\newtheorem{thm}{Theorem}[section]
\newtheorem{cor}[thm]{Corollary}
\theoremstyle{definition}
\newtheorem{dfn}[thm]{Definition}
\newtheorem{lem}[thm]{Lemma}
\newtheorem{prop}[thm]{Proposition}
\newtheorem{ex}[thm]{Example}
\newtheorem{rem}[thm]{Remark}
\theoremstyle{remark}
\newtheorem{claim}{Claim}
\newtheorem*{claim*}{Claim}
\newtheorem*{rem*}{Remark}
\numberwithin{equation}{section}
\def\subsection{\@startsection{subsection}{3}
	\z@{.5\linespacing\@plus.7\linespacing}{.5\linespacing}
	{\bfseries\itshape}} \makeatother
\makeatletter \renewenvironment{proof}[1][\proofname]{
	\par\pushQED{\qed}\normalfont
	\topsep6\p@\@plus6\p@\relax
	\trivlist\item[\hskip\labelsep\bfseries#1\@addpunct{.}]
	\ignorespaces}{
	\popQED\endtrivlist\@endpefalse} \makeatother
\begin{document}
	
%Topmatter
\title[Linearization of holomorphic families of algebraic automorphisms of $\C^{2}$] 
{Linearization of holomorphic families of algebraic automorphisms of the affine plane}% \\ \today}

\author{Shigeru Kuroda}
\address{Department of Mathematical Sciences\\
	Tokyo Metropolitan University\\
	1-1 Minami-Osawa, Hachioji, Tokyo, 192-0397, Japan}
\email{kuroda@tmu.ac.jp}
\thanks{The first author was partially supported by JSPS KAKENHI Grant Number 18K03219}

\author{Frank Kutzschebauch}
\address{Departement Mathematik\\
	Universit\"at Bern\\
	Sidlerstrasse 5, CH--3012 Bern, Switzerland}
\email{frank.kutzschebauch@math.unibe.ch}
\thanks{The second author was partially supported by Schweizerischer Nationalfonds Grant 200021-116165 }

\author{Tomasz Pe\l ka}
	\address{Basque Center for Applied Mathematics, Alameda de Mazarredo 14, 48009 Bilbao, Spain}
\email{tpelka@bcamath.org}

\keywords{linearization of group actions, Oka properties, affine plane}
\subjclass[2010]{Primary: 14R20,  32M05; Secondary: 14R10, 32M17, 32Q56}
%\date{\today}
\bibliographystyle{amsalpha}
%End topmatter
\begin{abstract} 
	Let $G$ be a reductive group. We prove that  a family  of polynomial actions of $G$ on $\C^2$, holomorphically parametrized by an open Riemann surface, is linearizable. As an application, we show that a particular class of reductive group actions on $\C^3$ is linearizable. The main step of our proof is to establish a certain restrictive  Oka property  for groups  of equivariant algebraic automorphisms of $\C^2$.
\end{abstract}

\maketitle

%\setcounter{tocdepth}{1}
%\tableofcontents

\section{Introduction}

Let $G$ be a reductive subgroup of the algebraic (or holomorphic) automorphism group of $\C^n$. The \emph{Linearization Problem} asks if there are algebraic (or holomorphic) coordinates on $\C^n$ in which $G$ acts linearly. If this is the case, we say that $G$ is \emph{algebraically} (or \emph{holomorphically}) \emph{linearizable}. Both algebraic and holomorphic versions of Linearization Problem gained much attention since 1970s, and their complete solutions still remain elusive: for a survey, see e.g.\ \cite{Kraft-challenging_problems,KS_reductive,Ku,DK} and references therein.
\smallskip

For a general $G$ and $n$, the answer to this problem is negative. In the algebraic case, for any non-abelian, semi-simple $G$ there are counterexamples with $n\geq 4$ \cite{Knop}; first ones constructed by Schwarz \cite{Schwarz89}. They all come from non-trivial $G$-vector bundles over representations. However, if $G$ is abelian,  all such $G$-bundles are trivial \cite{Masuda}, so a method to construct counterexamples is yet to be found. 

A positive answer to the algebraic Linearization Problem is known for $n\leq 2$ \cite{Kambayashi_linearization} and for $n=3$ with $G$ infinite \cite{KR_families-of-group-actions}. The former follows from a classical structure theorem \cite{van_der_Kulk}, cf.\ Lemma \ref{lem:generic_linearization}, while the latter relies on a deep result \cite{KKMLR_Cst-linearizable} settling the case $G=\C^{*}$. For $n=3$ and $G$ finite, the algebraic Linearization Problem is open. But when the $G$-action fixes a variable, a positive answer follows from  \cite[Theorem D]{KR_families-of-group-actions}. Our Theorem \ref{thm:main} and Corollary \ref{cor:C3} are holomorphic counterparts of this result.%ese results. %below is a  holomorphic version of that result.
\smallskip

The holomorphic Linearization Problem is, in a way, even more subtle than the algebraic one. In case $n=2$, it has a positive answer for $G=\C^{*}$ \cite{Suzuki_C2}, but for finite $G$ it is widely open \cite[Remark 4.7]{DK}, see Example \ref{ex:C2-nonlinearizable}. Moreover, all $G$-bundles over representations are holomorphically trivial \cite{HK}, so the algebraic counterexamples do not generalize to this setting. Nonetheless, Derksen and the second author \cite{DK} found counterexamples for any $G$ and any $n\geq N_{G}$, where $N_{G}$ is a constant depending on $G$. As in the algebraic case, the smallest dimension where a counterexample is known is $n=4$, for $G=\Z_2$. 
\smallskip

In this article, we explore what happens between these two categories. Namely, we study algebraic $G$-actions which vary in holomorphic families. We make it precise in Definition \ref{def:family} below. 

Throughout the article, we denote by $\Autalg(\C^n)$ and $\Authol(\C^n)$ the groups of algebraic and holomorphic automorphisms of $\C^n$, respectively; and treat $\Autalg(\C^n)$ as a subgroup of $\Authol(\C^n)$. A map $\alpha$ from a complex manifold $X$ to $\Authol(\C^n)$ is \emph{holomorphic} %(or \emph{meromorphic}) 
if so is the associated map
\begin{equation*}
	X\times \C^n\ni (x,z)\mapsto \alpha(x)(z)\in \C^n,
\end{equation*}
sending $(x,z)$ to the value of $\alpha(x)\in \Authol(\C^n)$ at the point $z\in \C^n$.

\begin{dfn}\label{def:family}
Let $X$ be a complex manifold and let $G$ be a reductive group.  A \emph{holomorphic family of algebraic $G$-actions on $\C^n$ parametrized by $X$} is a holomorphic $G$-action on $X\times \C^{n}$ of the form
\begin{equation*}
\bar{\nu}\colon G\times (X\times \C^n)\ni (g,(x,z))\mapsto (x,\nu(g,x)(z))\in X\times \C^n,
\end{equation*}
for some holomorphic $\nu\colon G\times X\to\Autalg(\C^{n})$. We say that a holomorphic map $\psi\colon X\to \Authol(\C^n)$ \emph{linearizes} $\bar{\nu}$ if there is a linear representation $\rho: G \to \Gl_n (\C)$ such that, for all $g\in G$ and $x\in X$
\begin{equation}\label{eq:psi}
	\psi (x) \circ \nu(g, x) \circ \psi (x)^{-1} = \rho (g)
\end{equation}
We say that $\psi$ \emph{algebraically linearizes} $\bar{\nu}$ if $\psi$ satisfies \eqref{eq:psi} and $\psi(X)\subseteq \Autalg(\C^n)$. If there exists a holomorphic $\psi$ which (algebraically) linearizes $\bar{\nu}$, we say that $\bar{\nu}$ is (\emph{algebraically}) \emph{linearizable}.
\end{dfn}

Clearly, if a holomorphic family $\bar{\nu}$ of algebraic $G$-actions is linearizable, then every individual member $\nu (G, x)\subseteq \Autalg (\C^n) $ is linearizable, too. It is therefore natural to consider first those families for which this necessary condition is automatically satisfied. This is the case for $n=2$, when all reductive group actions are algebraically linearizable \cite[Corollary 4.4]{Kambayashi_linearization}.
\smallskip

This motivates the following theorem, which is the main result of the article.

\begin{thm}\label{thm:main} 
	Let $X$ be an open Riemann surface, let $G$ be a reductive group, and let $\bar{\nu}$ be a holomorphic family of algebraic $G$-actions on $\C^2$ parametrized by $X$.% Then 
	\begin{enumerate}
		\item\label{item:G-any} The family $\bar{\nu}$ is linearizable.
		\item\label{item:G-non-cyclic} If $G$ is not cyclic, then $\bar{\nu}$ is algebraically linearizable.
	\end{enumerate}
\end{thm}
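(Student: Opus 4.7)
The strategy is to reduce to a Cousin-type problem for a sheaf of $G$-equivariant automorphism groups on the Stein base $X$, and to solve it via an Oka principle. Since $G$ is reductive and $\nu$ varies holomorphically on the (without loss of generality connected) manifold $X$, the algebraic $G$-actions $\nu(\cdot,x)$ are all conjugate in $\Autalg(\C^{2})$ to a single linear representation $\rho\colon G\to \Gl_{2}(\C)$: this follows by applying Kambayashi's theorem at one point together with rigidity of representations of reductive groups, propagated along the holomorphically varying family.

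I would first establish a \emph{parametric} version of Kambayashi's linearization: every $x_{0}\in X$ admits a neighborhood $U$ and a holomorphic $\psi_{U}\colon U\to \Autalg(\C^{2})$ with $\psi_{U}(x)\circ \nu(g,x)\circ \psi_{U}(x)^{-1}=\rho(g)$ for all $g\in G$ and $x\in U$. This relies on the van der Kulk amalgamated product structure $\Autalg(\C^{2})=\Aff\ast_{B}\mathrm{Jonq}$, the classification of reductive subgroups of each factor, and a deformation argument ensuring that a pointwise linearization extends holomorphically in the parameter. Given such local linearizations over an open cover $\{U_{i}\}$ of $X$, the transition maps $\phi_{ij}\coloneqq \psi_{i}\circ\psi_{j}^{-1}$ on $U_{i}\cap U_{j}$ are holomorphic maps into the centralizer $C\coloneqq \{\varphi\in \Autalg(\C^{2})\colon \varphi\circ\rho(g)=\rho(g)\circ\varphi\ \text{for all}\ g\in G\}$, that is, into the group of $G$-equivariant algebraic automorphisms of the linear model; they form a $1$-cocycle whose triviality is equivalent to algebraic linearizability of $\bar{\nu}$. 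For holomorphic linearization, one replaces $C$ by the larger group $C^{\mathrm{hol}}\subset \Authol(\C^{2})$.

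The final step is an Oka principle: over the Stein space $X$, such a cocycle is trivial once the structure group satisfies a suitable Oka-type property. For part \eqref{item:G-non-cyclic}, with $G$ non-cyclic, I expect the centralizer $C$ to be a finite-dimensional linear algebraic group — the rigidity of a faithful non-cyclic reductive action on $\C^{2}$ ought to rule out non-affine equivariant algebraic automorphisms — so Grauert's classical Oka theorem applies and yields triviality of the cocycle inside $C$ directly. For part \eqref{item:G-any} one either works in the enlarged group $C^{\mathrm{hol}}$, or establishes a weaker Oka-type statement for $C$ itself that suffices to trivialize the cocycle holomorphically.

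\textbf{Main obstacle.} The hardest step is establishing the Oka-type property for the group of $G$-equivariant algebraic automorphisms of $(\C^{2},\rho)$, as flagged in the abstract. When $G$ is cyclic this group is infinite-dimensional — it contains, for example, equivariant triangular automorphisms in suitable coordinates — so classical Grauert theory does not apply, and one must construct an Oka principle essentially by hand, presumably by exhibiting enough complete $G$-equivariant vector fields to run a spray or approximation argument in the spirit of Anders\'en--Lempert theory adapted to the equivariant and algebraic setting.
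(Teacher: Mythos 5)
Your overall architecture --- linearize locally, observe that the ambiguity lives in the centralizer $\Autalg^{G}(\C^{2})$, and globalize by an Oka principle for that group --- is close in spirit to the paper, and your identification of the Oka property for $\Autalg^{G}(\C^{2})$ as the crux is exactly right. For non-cyclic $G$ your plan is essentially workable, since the centralizer is indeed a connected finite-dimensional Lie group (Lemma \ref{lem:Aut_Lie-group}) and Grauert's theorem applies. But there are two genuine gaps. First, your local step is not a soft deformation argument. The paper linearizes once and for all over the field $\cM(X)$ (Lemma \ref{lem:generic_linearization}), obtaining a global \emph{meromorphic} $\psi$, and the real content is making $\psi$ holomorphic at each pole; this is a linearization problem over the DVR $\cO_{X,x}$, where no van der Kulk decomposition is available, and it is solved by the Sathaye--Kraft--Russell argument (Lemma \ref{lem:KR}), whose key ingredient is the Abhyankar--Moh--Suzuki theorem applied to the image curve of the reduction of $\psi$ modulo the maximal ideal. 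Your proposal hides this entire step inside ``a deformation argument ensuring that a pointwise linearization extends holomorphically in the parameter,'' which is precisely the hard part.

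Second, for cyclic $G$ your proposed engine fails. The paper shows in Section \ref{sec:remarks} that there are not enough complete $G$-equivariant vector fields --- e.g.\ for $\Z_3$ acting by $(z_1,z_2)\mapsto(\zeta z_1,\zeta^2 z_2)$ the equivariant shears and overshears do not span, and the equivariant density property fails --- so an Anders\'en--Lempert-type approximation or spray argument cannot be run. The actual proof of Proposition \ref{prop:Oka_explicit} instead uses the Friedland--Milnor polydegree stratification of $\Autalg^{G}(\A^{2}_{\kk})$ and establishes only a \emph{basic} Oka property with approximation and jet interpolation, restricted to maps of bounded degree from one-dimensional $X$. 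This is weaker than what a \v{C}ech-cocycle trivialization would require: trivializing a cocycle is essentially a parametric problem, and the paper explicitly does not know whether parametric Oka properties hold for $\Autalg^{G}(\C^{2})$ (that would need a van der Kulk-type structure theorem over non-field rings). The paper avoids the cocycle formulation entirely: it removes the finitely many poles inside each compact of an exhaustion, uses approximation on the previous compact to make the corrections converge, and passes to a limit in $\Authol(\C^{2})$ --- which is also exactly why, for cyclic $G$, the conclusion is only holomorphic rather than algebraic linearizability (cf.\ Remarks \ref{rem:deg_alpha} and \ref{rem:alg_open}), a distinction your proposal does not account for.
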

\begin{rem}
	The question if $\bar{\nu}$ is algebraically linearizable for cyclic $G$ remains open, see Remark \ref{rem:alg_open}.% for a discussion.
\end{rem}

\begin{rem} 
	It is also an open question if one can generalize Theorem \ref{thm:main} to $\C^{n}$ for $n\geq 3$, or to higher-dimensional parameter spaces $X$. Our proof relies both on the explicit description of $\Autalg(\C^{2})$, and on extension properties for meromorphic functions on $X$, neither of which is available in higher dimensions.
\end{rem}

Recall that the Linearization Problem for actions of finite groups on $\C^3$ is open. For a particular class of such actions, a positive answer follows from Theorem \ref{thm:main}\ref{item:G-any} applied to $X=\C^1$: 
%
%More precisely, we have the following corollary.%, which is a holomorphic version of the Corollary to Theorem D in \cite{KR_families-of-group-actions}.

\begin{cor}\label{cor:C3}
	Let $G\subseteq \Authol(\C^3)$ be a reductive group. Assume that the projection $\C^3\to\C^2$ is $G$-equivariant, and the induced $G$-action on its fibers is algebraic. Then
%	\begin{enumerate}
%		\item\label{item:cor_G-any} The subgroup $G\subseteq \Authol(\C^3)$ is holomorphically linearizable.
%		\item\label{item:cor_G-non-cyclic} If $G\subseteq \Autalg(\C^3)$ and $G$ is not cyclic, then $G$ is algebraically linearizable.
%	\end{enumerate}
$G$ is holomorphically 
linearizable.
\end{cor}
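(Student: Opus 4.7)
The plan is to apply Theorem~\ref{thm:main}\ref{item:G-any} with $X=\C^{1}$ after exhibiting the $G$-action on $\C^{3}$ as a holomorphic family of algebraic $G$-actions on $\C^{2}$ parametrized by an open Riemann surface isomorphic to $\C$. Since $\pi\colon\C^{3}\to\C^{2}$ is a $G$-equivariant holomorphic $\A^{1}$-bundle, one writes the action in adapted coordinates $(p,z)\in\C^{2}\times\C$ as $g\cdot(p,z)=(\phi_{g}(p),\,a_{g}(p)z+b_{g}(p))$ with $\phi_{g}\in\Authol(\C^{2})$ and holomorphic $a_{g},b_{g}\colon\C^{2}\to\C$, $a_{g}$ nowhere zero.

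First, I put the affine bundle into normal form. Using reductivity of $G$ and the Stein (and contractible) character of $\C^{2}$, an equivariant averaging argument yields a $G$-invariant holomorphic section $s\colon\C^{2}\to\C^{3}$ of $\pi$, and translating the fiber coordinate by $s$ kills $b_{g}$. Since $H^{1}(\C^{2},\cO^{*})=0$, the multiplicative cocycle $\{a_{g}\}$ defines a trivial $G$-linearized holomorphic line bundle on $\C^{2}$, and a further equivariant rescaling of the fiber coordinate reduces $a_{g}$ to a character $\chi\colon G\to\C^{*}$. Thus we may assume $g\cdot(p,z)=(\phi_{g}(p),\chi(g)z)$, and it remains to linearize the holomorphic base action $\phi_{g}$ on $\C^{2}$.

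Second, I construct a $G$-equivariant projection $\varpi\colon\C^{3}\to\C$ with trivial $G$-action on the target and whose generic fibers are $G$-invariant copies of $\C^{2}$ carrying algebraic $G$-actions. After placing a $G$-fixed point of $\phi_{g}$ at the origin of $\C^{2}$ (which exists by the reductive fixed-point theorem on Stein manifolds), one uses the $\chi$-semi-invariance of $z$ together with a $G$-invariant polynomial in $z$ to produce $\varpi$; the hypothesis that the fiber action of $\pi$ is algebraic, combined with the fiber-linear normal form from the first step, ensures that the $G$-actions on the fibers of $\varpi$ are algebraic and depend holomorphically on the parameter in $X=\C$. Applying Theorem~\ref{thm:main}\ref{item:G-any} to this family yields a holomorphic linearization, which, combined with the normal form of the first step, gives a holomorphic linearization of the entire $G$-action on $\C^{3}$.

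The main obstacle is the second step: constructing $\varpi$ so that the $G$-actions on its fibers are genuinely algebraic, not merely holomorphic. Packaging the a priori purely holomorphic base action $\phi_{g}$ into a family of algebraic $G$-actions on $\C^{2}$ over $\C$---precisely in order to have Theorem~\ref{thm:main} at our disposal---is the crux of the argument, and it is where the algebraicity assumption on the fiber action of $\pi$ is essential; the first step is equivariant Oka-principle boilerplate and the third is merely an application of the main theorem.
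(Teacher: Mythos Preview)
You have misread the direction of the projection. From the surrounding text---the paper says the corollary follows from Theorem~\ref{thm:main}\ref{item:G-any} ``applied to $X=\C^{1}$'', and calls it the holomorphic counterpart of the Kraft--Russell ``fixes a variable'' result---the intended hypothesis is a $G$-equivariant projection $\C^{3}\to\C^{1}$ with trivial $G$-action on the base, so the fibers are $\C^{2}$ (the ``$\C^{2}$'' in the displayed statement is a typo for $\C$). With that reading the proof is a one-liner: the $G$-action on $\C^{3}=\C\times\C^{2}$ is by definition a holomorphic family of algebraic $G$-actions on $\C^{2}$ parametrized by $X=\C$, Theorem~\ref{thm:main}\ref{item:G-any} produces a holomorphic $\psi\colon\C\to\Authol(\C^{2})$ satisfying \eqref{eq:psi}, and $(x,z)\mapsto(x,\psi(x)(z))$ conjugates the action to the linear one $(g,(x,z))\mapsto(x,\rho(g)z)$.

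Under your reading (base $\C^{2}$, fiber $\C$) the hypothesis ``the fiber action is algebraic'' is vacuous, since any holomorphic automorphism of $\C$ is affine; that alone should be a warning sign. More importantly, your Step~3 cannot be carried out. After the normal form of Step~2 the base action $\phi_{g}$ on $\C^{2}$ is an arbitrary reductive holomorphic action, and a $G$-invariant function built from the fiber variable $z$ (say $\varpi=z^{k}$) has level sets that are copies of $\C^{2}$ on which $G$ acts exactly by $\phi_{g}$---still holomorphic, not algebraic. So your ``crux'' reduces the problem to linearizing a holomorphic reductive action on $\C^{2}$, which is precisely the open problem the paper highlights in the introduction and in Example~\ref{ex:C2-nonlinearizable}. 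In short, the gap is not technical: with the projection going to $\C^{2}$ the statement is not known, and your outline does not supply the missing idea.
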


\begin{rem*}
 If $G$ in Corollary \ref{cor:C3} is a subgroup of $\Autalg(\C^3)$, then by \cite{KR_families-of-group-actions}, $G$ is algebraically linearizable.
\end{rem*}

We will now outline the proof of Theorem \ref{thm:main}. 

It is known that an action of a reductive group $G$ on the affine plane $\A^{2}_{\kk}$, over an arbitrary field $\kk$, is linearizable, see \cite[4.4]{Kambayashi_linearization}, \cite[\S 7, Corollary 1]{Kraft-challenging_problems} or Lemma \ref{lem:generic_linearization}. %If $\nu\colon G\times X\to \Autalg(\C^{2})$ is a holomorphic map which gives a family of $G$-actions, then a
Applying this result to the field $\kk=\cM(X)$ of meromorphic functions on $X$, one gets a \emph{meromorphic} map $\psi\colon X\to \Autalg(\C^{2})$ satisfying \eqref{eq:psi} for all $g\in G$ and $x\in X\setminus \{\mbox{poles of }\psi\}$; see Section \ref{sec:prelim_maps} for precise definitions.

To prove Theorem \ref{thm:main}, we need to remove the poles of $\psi$.
This is done in Section \ref{sec:proof} by replacing $\psi$ with $\psi\circ\alpha$ for a suitable meromorphic map $\alpha\colon X\to \Autalg(\C^{2})$ such that for all $x\in X$ and $g\in G$, $\alpha(x)\circ \nu(g,x)=\nu(g,x)\circ \alpha(x)$. A germ of such $\alpha$ at a pole of $\psi$ is given by Lemma \ref{lem:KR}, whose proof is based on \cite[Lemma 3.3]{KR_families-of-group-actions}. To extend such germs to a global $\alpha$, we use the following proposition. 

\begin{prop}[{see Proposition \ref{prop:Oka_explicit}}]\label{prop:Oka} 
	Let $G\subseteq \Autalg(\C^{2})$ be a reductive subgroup. Put $\Autalg^{G}(\C^2)=\{\phi\in \Autalg(\C^{2}): \forall_{g\in G}\colon\, g\circ \phi=\phi\circ g\}$. Then the basic Oka property with approximation and jet interpolation holds for those maps from connected open Riemann surfaces  to $\Autalg^G(\C^{2})$ which are of bounded degree.
\end{prop}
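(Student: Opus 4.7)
The plan is as follows. First, by Kambayashi's linearization theorem for reductive group actions on $\C^2$, after an algebraic change of coordinates we may assume that $G\subseteq \Gl_2(\C)$ acts linearly on $\C^2$. The conjugation action of $G$ on $\Autalg(\C^2)$ is then algebraic, and $\Autalg^G(\C^2)$ is its fixed-point subscheme. Next, van der Kulk's theorem exhibits $\Autalg(\C^2)$ as the amalgamated free product $\Aff_2(\C)*_{B} J$ of the affine group and the Jonqui\`eres group along their intersection $B$. This produces a natural filtration $\Autalg(\C^2)=\bigcup_{d}\Autalg_{\leq d}(\C^2)$ by the length of the amalgamated word, in which each $\Autalg_{\leq d}(\C^2)$ is a finite-dimensional quasi-affine algebraic variety. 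Intersecting with $\Autalg^G(\C^2)$ yields finite-dimensional algebraic subvarieties $\Autalg^G_{\leq d}(\C^2)$, and every bounded-degree holomorphic map $X\to\Autalg^G(\C^2)$ factors through one such stratum.

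The second step is to analyze each stratum via Bass--Serre theory applied to the amalgamation tree. Since $G$ is reductive, it either fixes a vertex of the tree (so that, after an inner automorphism, $\Autalg^G(\C^2)$ embeds in the centralizer of $G$ inside $\Aff_2(\C)$ or inside $J$) or stabilizes an axis. A case analysis according to the classification of reductive subgroups of $\Gl_2(\C)$---a torus, a finite cyclic group, a finite binary polyhedral group, an orthogonal group, $\mathrm{SL}_2$, or $\Gl_2$ itself---then yields an explicit description of $\Autalg^G(\C^2)$ as a combination of tori, unipotent factors, and finite quotients glued along a subtree of the van der Kulk tree. The crucial consequence is that each $\Autalg^G_{\leq d}(\C^2)$ is a smooth quasi-affine variety on which the unipotent $\C$-subgroups of $\Autalg(\C^2)$ commuting with $G$ furnish a dominating family of sprays.

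Finally, because the source $X$ is a $1$-dimensional Stein space, the Oka--Forstneri\v c machinery---in particular the basic Oka property with approximation and jet interpolation for algebraically elliptic (equivalently, subelliptic) manifolds---applies directly to maps $X\to\Autalg^G_{\leq d}(\C^2)$ and produces the desired approximation on a compact $\cO(X)$-convex subset together with prescribed jet interpolation at finitely many points, through a bounded-degree map. The main obstacle I anticipate is the case in which $G$ is cyclic and compatible with the Jonqui\`eres structure, where $\Autalg^G(\C^2)$ retains a genuine amalgamated product structure: the stratification by $d$ and the construction of $G$-equivariant sprays on each stratum then require careful bookkeeping to ensure both that the local gluing respects the amalgamation and that the resulting global map remains of controlled degree.
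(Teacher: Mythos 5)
Your opening moves match the paper's: linearize $G$ via Kambayashi so that $G\subseteq\Gl_2(\C)$, invoke van der Kulk, and reduce to finite-dimensional pieces indexed by (poly)degree. But there are two genuine gaps. First, the assertion that a bounded-degree holomorphic map $X\to\Autalg^G(\C^2)$ ``factors through one such stratum'' is where the real difficulty is buried, and as stated it is false in the sense needed: boundedness of degree only confines the map to a finite union of polydegree strata, and at isolated points of $X$ the polydegree drops (leading coefficients of the Friedland--Milnor normal form vanish, or the amalgamated word collapses). The union $\Autalg^G_{\leq d}(\C^2)$ of strata is not a manifold, so the Oka machinery for (sub)elliptic manifolds does not apply to it directly; only the individual stratum $\cA_d^G$ of fixed polydegree is nice (the paper shows it is an $\cF_d^G$-bundle over $\P^1\times\P^1$, or a fiber thereof, with $\cF_d^G\cong(\C^*)^{M'}\times\C^{N'}$). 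The paper's proof is organized precisely around this degeneration: it writes the map in terms of its meromorphic Friedland--Milnor coefficients, deforms those coefficients by Mittag-Leffler/Weierstrass on the Riemann surface, controls the pole orders so that the recomposed automorphism stays holomorphic (this is Lemma \ref{lem:perturbing_composition}, and is the reason the result is restricted to one-dimensional $X$), and then handles interpolation points of differing polydegree by composing several homotopies, each the identity to high order at the other points. None of this bookkeeping is optional, and your final paragraph acknowledges but does not supply it.

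Second, the proposed source of ellipticity --- ``unipotent $\C$-subgroups of $\Autalg(\C^2)$ commuting with $G$ furnish a dominating family of sprays'' --- would fail in exactly the hard (cyclic) case. The paper's Section \ref{sec:remarks} gives a concrete obstruction: for $G\cong\Z_3$ acting with weights $(1,2)$, the $G$-equivariant shears and overshears do not span the $G$-equivariant vector fields, and by \cite[Theorem 5.4]{KLL} even the Lie algebra generated by \emph{all} complete $G$-equivariant algebraic fields misses some equivariant fields. So equivariant one-parameter unipotent subgroups cannot dominate, and an Anders\'en--Lempert-style spray argument is unavailable. The strata are Oka for a different, explicit reason (they are $(\C^*)^M\times\C^N$-bundles over Oka bases, by Lemmas \ref{lem:G} and \ref{lem:AG-is-Oka}), and the approximation statement is obtained not from abstract ellipticity of a target manifold but from the classical Oka property for sections of line bundles applied coefficient-by-coefficient. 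You would need to replace the spray step by this explicit description, and then still confront the degeneration issue above.
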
 
For a definition of Oka properties and related notions see Section \ref{sec:Oka} or \cite[5.15]{Forstneric_book}. Proposition \ref{prop:Oka} is an equivariant version of the case $\mathscr{G}=\Autalg(\C^2)$ of \cite[Theroem 1.2]{FL_Oka-for-groups}. Its proof, given in Section \ref{sec:Oka_proof}, relies on an explicit description of the space of germs of holomorphic maps to $\Autalg^{G}(\C^2)$, given in Lemma \ref{lem:AG-is-Oka}. The latter generalizes \cite{FM_Dynamical-properties}, and makes use of the structure theorem \cite{van_der_Kulk} for $\Autalg(\C^2)$.

If $G$ is not cyclic, Lemma \ref{lem:AG-is-Oka} implies that $\Autalg^{G}(\C^{2})$ is a connected Lie group, see Lemma \ref{lem:Aut_Lie-group}. Using the usual Oka property for $\Autalg^{G}(\C^{2})$, it is easy to construct $\alpha$ which removes all poles of $\psi$ at once. This way, we prove Theorem \ref{thm:main}\ref{item:G-non-cyclic}. 

If $G$ is cyclic then $\Autalg^{G}(\C^{2})$ is more complicated. To prove Theorem \ref{thm:main}\ref{item:G-any}, we use Proposition \ref{prop:Oka} to remove poles one by one, and get $\psi$ by by passing to the limit. Therefore, our $\psi$ takes values in $\Authol(\C^{2})$ instead of $\Autalg(\C^{2})$.
%For a non-cyclic reductive $G$ there are no such poles as seen from the proof of Lemma \ref{lem:KR}. %The proof of Theorem \ref{thm:main} is given in Section \ref{sec:proof}.

\section{Preliminaries}\label{sec:prelim}

\subsection{The space of holomorphic maps to \texorpdfstring{$\Authol(\C^n)$}{Authol} and \texorpdfstring{$\Autalg(\C^n)$}{Autalg}}\label{sec:prelim_maps}
We will now recall some basic properties of the spaces of holomorphic maps from a Stein space $X$ to $\Autalg(\C^n)$ or $\Authol(\C^{n})$, and fix notation for a remaining part of the article.

As usual, we denote by $\cO(X)$ the ring of holomorphic functions on $X$, and by $\cO_{X,x}$ the ring of holomorphic germs at a point $x\in X$. We write $\cM(X)$ and $\cM_{X,x}$ for the fields of meromorphic functions on $X$ and germs at $x\in X$: they are fields of fractions of $\cO(X)$ and $\cO_{X,x}$, respectively.
\smallskip

For a ring $R$, we denote by $R^{[n]}$ the polynomial algebra in $n$ variables, and by  $\A^{n}_{R}=\Spec R^{[n]}$ the affine $n$-space over $R$, with coordinates $z\de (z_1,\dots,z_n)$. We denote the endo- and automorphism groups of $\A^{n}_{R}$ by $\End(\A^n_R)$ and $\Aut(\A^n_R)$, respectively. Thus in our notation, $\Autalg(\C^n)=\Aut(\A^{n}_{\C})$.

For $m=(m_1,\dots,m_n)\in \Z^n$  we write $z^{m}= z_1^{m_1}\cdot\dots \cdot z_n^{m_n}$ and $|m|=m_1+\dots+m_{n}$. Every $\alpha\in \Aut(\A^{n}_{R})$ can be written as $\alpha(z)=\sum_{m\in \Nn^{n}}r_{m}z^{m}$ for some $r_m\in R^{n}$. Its \emph{degree} is $\deg\alpha\de \max\{|m|:r_{m}\neq 0\}$. We will often skip the indexing set $\Nn^n$ and write simply $\alpha(z)=\sum_{m}r_{m}z^{m}$.
\smallskip

As in the introduction, we say that a map $\psi\colon X\to \Authol(\C^{n})$ is \emph{holomorphic} if the associated map 
\begin{equation*}
	\hat{\psi}\colon X\times \C^{n}\ni (x,z)\mapsto \psi(x)(z)\in \C^{n}
\end{equation*}
is holomorphic. A \emph{meromorphic} map $\psi\colon X\to \Authol(\C^n)$ is a holomorphic map $\psi\colon U\to \Authol(\C^n)$ from a Zariski open subset $U$ of $X$, such that $\hat{\psi}\colon U\times \C^n\to \C^n$ as above extends to a meromorphic map $X\times \C^n\to \C^n$. A \emph{pole} of $\psi$ is the image of a pole of $\hat{\psi}$ under the projection to $X$. Clearly, the set of poles of $\psi$ is a divisor in $X$, disjoint from $U$, and $\psi$ is holomorphic away from its poles.

Holomorphic (or meromorphic) maps to $\Autalg(\C^n)$ are the same as polynomial automorphisms of $\C^n$ with holomorphic (respectively, meromorphic) coefficients. More precisely, we have the following lemma.

\begin{lem}\label{lem:coeff_holo} Let $\psi\colon X\to \Authol(\C^n)$ be meromorphic. Write $\hat{\psi}(x,z)=\sum_{\iota}a_{\iota}(x)z^{\iota}$ for $(x,z)\in X\times \C^{n}$. Then for every $\iota\in \Nn^n$, $a_{\iota}\colon X\to \C^n$ is meromorphic, holomorphic away from the poles of $\psi$.
\end{lem}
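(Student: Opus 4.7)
My plan is to express each Taylor coefficient $a_{\iota}$ via Cauchy's integral formula applied in the $z$-variable and then to control its behavior across the pole divisor $P\subset X$ of $\psi$, using the observation that the pole divisor of $\hat\psi$ in $X\times\C^{n}$ must be ``vertical'', i.e.\ contained in $P\times\C^{n}$ (since $\hat\psi$ is holomorphic on $(X\setminus P)\times\C^{n}$). Off $P$, the partial map $\hat\psi(x,\cdot)$ is entire on $\C^{n}$, so for any polyradius $r>0$ the Cauchy formula
\[
	a_{\iota}(x)=\frac{1}{(2\pi i)^{n}}\oint_{T_{r}}\frac{\hat\psi(x,z)}{z^{\iota+\mathbf{1}}}\,dz_{1}\cdots dz_{n}
\]
holds over the torus $T_{r}=\{|z_{j}|=r_{j}\}$. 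Since the integrand depends holomorphically on $x\in X\setminus P$, this displays $a_{\iota}$ as holomorphic there.

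To extend $a_{\iota}$ meromorphically across a point $x_{0}\in P$, I would pick a connected neighborhood $V\ni x_{0}$ on which $P\cap V=\{h=0\}$ for some $h\in\cO(V)$. At each $(x_{0},z_{0})\in V\times\C^{n}$, writing $\hat\psi=F/G$ with $F,G$ coprime holomorphic germs, the inclusion $\{G=0\}\subseteq\{h\circ\pi=0\}$ combined with the analytic Nullstellensatz gives $G\mid(h\circ\pi)^{m_{z_{0}}}$ for some $m_{z_{0}}\geq 0$. Fixing a polyradius $r>0$, compactness of $T_{r}$ would then produce a single integer $m$ for which $(h\circ\pi)^{m}\hat\psi$ extends holomorphically to an open neighborhood of $\{x_{0}\}\times T_{r}$; after shrinking $V$ I may assume this neighborhood contains $V\times T_{r}$. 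Applying the Cauchy formula over $T_{r}$ then shows
\[
	h(x)^{m}\,a_{\iota}(x)=\frac{1}{(2\pi i)^{n}}\oint_{T_{r}}\frac{h(x)^{m}\,\hat\psi(x,z)}{z^{\iota+\mathbf{1}}}\,dz,\qquad x\in V\setminus P,
\]
with a holomorphic right-hand side on all of $V$. Consequently $a_{\iota}$ is meromorphic on $V$ with poles contained in $P\cap V$, and gluing such local extensions across $P$ yields the desired global meromorphic $a_{\iota}\colon X\to\C^{n}$.

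The main difficulty I anticipate is the uniform bound on pole orders in the previous step: a priori the multiplicities $m_{z_{0}}$ could grow with $z_{0}\in T_{r}$, in which case no single power of $h$ would clear the pole inside the Cauchy integrand. Compactness of the torus $T_{r}$ together with the analytic Nullstellensatz is exactly what overcomes this obstacle, and both ingredients rest on the vertical structure of the pole divisor of $\hat\psi$, i.e.\ on the hypothesis that $\psi$ is holomorphic on the complement of $P$.
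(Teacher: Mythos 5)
Your proposal is correct and follows essentially the same route as the paper, which simply invokes the Cauchy integral formula $a_{\iota}(x)=\tfrac{1}{(2\pi i)^{n}}\int_{(\d\D)^{n}}\hat{\psi}(x,\zeta)\zeta^{-\iota-1}\,d\zeta$ over the distinguished boundary. Your additional work --- using the vertical structure of the polar divisor, the analytic Nullstellensatz, and compactness of the torus to clear denominators by a single power of a local equation $h$ for $P$ --- merely makes explicit the meromorphic continuation across $P$ that the paper leaves to the reader.
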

\begin{proof}
	This result follows from the Cauchy integral formula $a_{\iota}(x)=\tfrac{1}{(2\pi i)^{n}}\int_{(\d \D)^{n}} \tfrac{\hat{\psi}(x,\zeta)}{\zeta^{\iota+1}}d\zeta$.%, see \cite[2.6.2]{Jarnicki-Jakobczak_scv}.
\end{proof}

For two meromorphic maps $\phi,\psi\colon X\to \Authol(\C^{n})$ we write $\phi\circ \psi$ for a meromorphic map $X\ni x\mapsto \phi(x)\circ \psi(x)\in\Authol(\C^{n})$. Moreover, we denote by $\id$ the constant map $X\ni x\mapsto \id_{\C^{n}}\in\Authol(\C^{n})$.
\smallskip

The space $\Authol(\C^n)$, equipped with a usual compact-open topology, admits a complete metric:
\begin{equation}\label{eq:metric}
d(\alpha,\beta)=\sum_{j=1}^{\infty}2^{-j}(\min\{\sup_{|{z}|\leq j}||\alpha({z})-\beta({z})||,1\}+\min\{\sup_{|{z}|\leq j}||\alpha^{-1}({z})-\beta^{-1}({z})||,1\});\ \alpha,\beta\in\Authol(\C^n)
\end{equation}
cf.\ \cite[4.1]{Kal-Ku_AL-theory} or \cite[p.\ 108]{Forstneric_book}. Clearly, a holomorphic map $X\to\Authol(\C^n)$ is continuous in this topology. Hence the set of all such maps is equipped with a compact-open topology, too. The following lemma summarizes some basic, well-known facts about this set.

\begin{lem}\label{lem:topology}%[Properties of compact-open topology]
	Let $\cH$ denote the space of holomorphic maps from a Stein space $X$ to $\Authol(\C^n)$.%, equipped with compact open topology.
	\begin{enumerate}
		\item\label{item:composition} The map $\cH\times \cH\ni(\phi,\psi)\mapsto \phi\circ \psi \in \cH$ is continuous.
		\item\label{item:coefficients} For $\psi\in \cH$, write $\hat{\psi}(x,z)=\sum_{\iota}a_{\iota}(x)z^{\iota}$. Then $\cH\ni \psi\mapsto (a_{\iota})\in \prod_{\iota}\cO(X)^{n}$ is a homeomorphism onto its image. In other words, holomorphic maps $X\to\Authol(\C^n)$ depend continuously on their coefficients.
		\item\label{item:compacts} Let $K_{0}\subseteq K_{1}\subseteq\dots$ be an exhaustive family of compact subsets of $X$, i.e.\ $X=\bigcup_{j}K_{j}$. Then $\cH$ is completely metrizable by
		\begin{equation*}
		\mathrm{d}(\alpha,\beta)=\sum_{j=0}^{\infty} 2^{-j}\min\{d_{K_j}(\alpha,\beta),1\},
		\end{equation*}
		where for a compact set $K$ we write $d_{K}(\alpha,\beta)=\sup_{x\in K}d(\alpha(x),\beta(x))$, for $d$ as in \eqref{eq:metric}.
	\end{enumerate}
\end{lem}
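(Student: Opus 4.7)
The three parts are standard properties of function spaces; I would handle them in order.

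For (a), it suffices to prove that composition is continuous on $(\Authol(\C^n),d)$ and then transfer pointwise in $x\in X$ with uniformity on compacts. On $\Authol(\C^n)$, assume $\alpha_k\to\alpha$ and $\beta_k\to\beta$ in $d$. Uniform convergence of $\beta_k$ to $\beta$ on any compact $K\subseteq\C^n$ traps $\bigcup_k\beta_k(K)$ in a larger compact $K'$; uniform continuity of $\alpha$ on $K'$ together with $\alpha_k\to\alpha$ uniformly on $K'$ gives $\alpha_k\circ\beta_k\to\alpha\circ\beta$ uniformly on $K$. The metric $d$ also tracks inverses, so the same argument applied to $(\alpha_k\circ\beta_k)^{-1}=\beta_k^{-1}\circ\alpha_k^{-1}$ handles the second summand in \eqref{eq:metric}. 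Extending to $\cH$ is then just passing to $\sup_{x\in K}$ over compacts $K\subseteq X$.

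For (b), continuity of the forward map $\psi\mapsto(a_\iota)$ is essentially the content of Lemma \ref{lem:coeff_holo}: the Cauchy integral formula expresses $a_\iota$ as an integral of $\hat\psi$ over a polytorus, so $\hat\psi_k\to\hat\psi$ uniformly on compacts of $X\times\C^n$ yields $a_\iota^{(k)}\to a_\iota$ uniformly on compacts of $X$ for every $\iota$. Injectivity follows from uniqueness of Taylor expansions. The main obstacle is backward continuity on the image: naive coefficient-wise convergence in $\prod_\iota\cO(X)^n$ is strictly weaker than uniform-on-compacts convergence of the Taylor series in general, so one has to exploit the additional structure of the image --- concretely, Cauchy estimates of the form $|a_\iota(x)|\leq M_R(x)R^{-|\iota|}$ with $M_R$ locally bounded on $X$ --- to control the tails of the series $\sum_\iota a_\iota^{(k)}(x)z^\iota$ uniformly in $(x,z)$ on compacts of $X\times\C^n$. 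Combined with the Weierstrass convergence theorem, this upgrades coefficient-wise convergence on the image to convergence in $\cH$.

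For (c), completeness of $(\Authol(\C^n),d)$ is the classical statement: a $d$-Cauchy sequence has both $\alpha_k$ and $\alpha_k^{-1}$ converging uniformly on compacts to some $\alpha,\beta$, and a Hurwitz-type argument shows $\alpha\in\Authol(\C^n)$ with $\alpha^{-1}=\beta$. Granted this, $\cH$ is the space of continuous maps from $X$ into a complete metric space, topologized by the weighted sup-metric along the exhaustion $\{K_j\}$; this is the standard construction of a translation-invariant complete metric on $C(X,Y)$ for $Y$ complete, and completeness follows by a diagonal argument. Finally, holomorphy of the limit in $x$ is Weierstrass, so a $\mathrm{d}$-Cauchy sequence in $\cH$ indeed has its limit in $\cH$.
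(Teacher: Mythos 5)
Parts (a) and (c) of your proposal are correct; they amount to writing out the details behind the paper's two citations to Engelking (continuity of composition when the intermediate space is locally compact, and the identification of the compact-open topology with uniform convergence on compacts, together with completeness). Likewise the forward half of (b) --- continuity and injectivity of $\psi\mapsto(a_{\iota})$ via the Cauchy integral formula of Lemma \ref{lem:coeff_holo} --- is exactly the paper's argument, which in fact says nothing beyond this.

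Your treatment of the backward direction of (b) has a genuine gap, and it cannot be repaired in the stated generality. The Cauchy estimate $|a_{\iota}(x)|\leq M_{R}(x)R^{-|\iota|}$ holds with $M_{R}(x)=\sup_{|z|\leq R}\|\hat{\psi}(x,z)\|$ for each \emph{fixed} $\psi$, but convergence of the coefficient tuples in the product topology gives no uniform bound on $M_{R}^{(k)}$ along a sequence, so the tails of $\sum_{\iota}a^{(k)}_{\iota}(x)z^{\iota}$ cannot be controlled this way. Indeed, the shears $\psi_{k}(z_1,z_2)=(z_1+z_2^{k},z_2)$, viewed as constant elements of $\cH$ for $n=2$, satisfy $(a^{(k)}_{\iota})_{\iota}\to(a_{\iota}(\id))_{\iota}$ in $\prod_{\iota}\cO(X)^{n}$, since each fixed coefficient is eventually that of the identity, while the $j=1$ term of \eqref{eq:metric} already gives $d(\psi_{k}(x),\id_{\C^2})\geq 1$ for all $k$ and $x$; so the inverse of $\psi\mapsto(a_{\iota})$ is not continuous on the image. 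The claim becomes true --- and this is the only form in which it is used later, e.g.\ in the proofs of Proposition \ref{prop:Oka_explicit} and Lemma \ref{lem:ingredients}, cf.\ Lemma \ref{lem:coefficients}\ref{item:bounded_degree} --- after restricting to families of bounded degree: then the sum is a fixed finite one, coefficient-wise convergence immediately gives locally uniform convergence of $\hat{\psi}_{k}$, and the remaining point (convergence of the inverses, which the metric $d$ also tracks) follows from the fact that inversion is a morphism on the variety of polynomial automorphisms of degree $\leq N$, or, in the concrete situations of Section \ref{sec:Oka_proof}, from the explicit Jung--van der Kulk factorizations, whose factors invert continuously. You should either impose the bounded-degree hypothesis or retain only the forward direction of (b).
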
 
\begin{proof}
	Part \ref{item:composition} follows from the fact that $X$ and $\C^n$ are locally compact, see \cite[3.4.2]{Engelking}. Part \ref{item:coefficients} is once again a consequence of Cauchy integral formula, see Lemma \ref{lem:coeff_holo}. The formula in \ref{item:compacts} says that the compact-open topology is the topology of uniform convergence on compacts \cite[8.2.6]{Engelking}
\end{proof}

%We now restrict our attention to maps to $\Autalg(\C^n)$. First, we show that they are the same as algebraic automorphisms of $\C^n$ with holomorphic coefficients. %A stronger version of this statement is given by Lemma \ref{lem:coefficients}\ref{item:iso_of_schemes} below. %For a ring $R$, we write $R^{[n]}=R[z]=R[z_{1},\dots, z_{n}]$ and $\A^{n}_{R}=\Spec R^{[n]}$ for an affine $n$-space over $\Spec R$. In particular, $\Aut(\A^n_\C)=\Autalg(\C^n)$. 

\begin{lem}\label{lem:coefficients}Let $X$ be an irreducible Stein space.
	\begin{enumerate}
		\item\label{item:bounded_degree} Let $\psi\colon X\to \Autalg(\C^n)$ be holomorphic. Then $\psi$ is \emph{of bounded degree}, that is, there is $N>0$ such that $\deg \psi(x)\leq N$ for all $x\in X$.
		\item\label{item:iso_of_schemes} The space of holomorphic (respectively, meromorphic) maps $X\to \Autalg(\C^{n})$ is naturally isomorphic to $\Aut(\A^{n}_{\cO(X)})$ (respectively, $\Aut(\A^{n}_{\cM(X)})$) as ind-groups.%, filtered by the degree. %as $\cO(X)$-modules (respectively, $\cM(X)$-modules).
	\end{enumerate}
\end{lem}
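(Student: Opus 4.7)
\textbf{Part (a)} should follow by combining Lemma \ref{lem:coeff_holo} with Baire category. The plan is to expand $\hat\psi(x,z)=\sum_{\iota} a_\iota(x) z^\iota$; by Lemma \ref{lem:coeff_holo}, each coefficient $a_\iota$ lies in $\cO(X)^n$. For every $N\geq 0$, the set $X_N\de\bigcap_{|\iota|>N}\{a_\iota=0\}$ is then a closed analytic subset of $X$, and since each $\psi(x)$ is a genuine polynomial automorphism, $X=\bigcup_{N\geq 0} X_N$. Because $X$ is irreducible Stein, any proper analytic subset is nowhere dense; hence Baire's theorem applied to the completely metrizable space $X$ forces $X_N=X$ for some $N$, which is the desired uniform degree bound.

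\textbf{Part (b) in the holomorphic case} is then essentially a matter of unwinding definitions. Given holomorphic $\psi\colon X\to \Autalg(\C^n)$, part (a) together with Lemma \ref{lem:coeff_holo} yields finitely many coefficients $a_\iota\in \cO(X)^n$, assembling into polynomials $f_1,\dots,f_n\in \cO(X)[z]$, i.e.\ an $\cO(X)$-algebra endomorphism of $\cO(X)[z]$. To upgrade to an automorphism, I would observe that $x\mapsto \psi(x)^{-1}$ is itself holomorphic (the Jacobian of a polynomial automorphism is a nonzero constant, so the holomorphic implicit function theorem applies), hence of bounded degree by (a); the resulting $(g_1,\dots,g_n)\in \cO(X)[z]^n$ satisfies $f_i(g_1,\dots,g_n)=z_i$ and $g_i(f_1,\dots,f_n)=z_i$ pointwise in $x$, and therefore as identities in $\cO(X)[z]$. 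The reverse assignment sends $(f_1,\dots,f_n)\in \Aut(\A^n_{\cO(X)})$ to the holomorphic map $x\mapsto (f_1(x,\cdot),\dots,f_n(x,\cdot))$, and is visibly inverse to the first.

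\textbf{Part (b) in the meromorphic case} proceeds the same way after excising the pole divisor $D$: the complement $X\setminus D$ is still irreducible Stein, so (a) bounds the degree of $\psi|_{X\setminus D}$, and Lemma \ref{lem:coeff_holo} extends the finitely many nonzero coefficients to elements of $\cM(X)$. In both cases the correspondence preserves the degree filtration — the $N$-th piece on each side is the scheme of $n$-tuples of holomorphic (respectively meromorphic) coefficients satisfying the closed condition of defining a fiberwise automorphism of degree $\leq N$ — so it promotes to an isomorphism of ind-groups, compatible with composition and inversion. I expect the main obstacle to be part (a): the Baire argument depends crucially on irreducibility of $X$, since on a reducible base the degrees of $\psi$ on different components could be unbounded.
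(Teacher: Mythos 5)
Your proposal is correct and follows essentially the same route as the paper: expand the coefficients via Lemma \ref{lem:coeff_holo}, write $X=\bigcup_N X_N$ with each $X_N$ closed, and combine Baire category with the identity principle on the irreducible space $X$ to get a uniform degree bound; part (b) is then the same unwinding of definitions. The only difference is that you spell out details the paper leaves implicit in its one-line proof of (b) — notably that the inverse family $x\mapsto\psi(x)^{-1}$ is holomorphic (hence of bounded degree) and that the pointwise composition identities hold as identities over $\cO(X)$ resp.\ $\cM(X)$ — which is a harmless and correct elaboration.
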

\begin{proof}
	\ref{item:bounded_degree} Write $\hat\psi(x,z)=\sum_{\iota}a_{\iota}(x)z^{\iota}$ and $X_{N}\de\{x\in X:\deg \psi(x)\leq N\}=\bigcap_{|\iota|>N}a_{\iota}^{-1}(0)$. By Lemma \ref{lem:coeff_holo}, $a_{\iota}$ is holomorphic, so each $X_{N}\subseteq X$ is closed. Because $\psi(x)\in \Autalg(\C^n)$ for every $x\in X$, we have $X=\bigcup_{N\geq 1}X_{N}$. Now, Baire category theorem implies that for some $M\geq 1$, $X_{M}$ has nonempty interior, that is, $\Int a_{\iota}^{-1}(0)\neq \emptyset$ for all $|\iota|>M$. Since $a_{\iota}$ is a holomorphic map from an irreducible space $X$, the condition  $\Int a_{\iota}^{-1}(0) \neq \emptyset$ implies that $a_{\iota}\equiv 0$. Therefore, $a_{\iota}\equiv 0$ for all $|\iota|>M$, which means that $\deg \psi(x)\leq M$ for all $x\in X$. Thus $X=X_{M}$, as claimed. 
	
	\ref{item:iso_of_schemes} Part \ref{item:bounded_degree} and Lemma \ref{lem:coeff_holo} imply that every such map can be written as a polynomial automorphism with holomorphic (resp.\ meromorphic) coefficients. This gives the required isomorphism.
\end{proof}

\subsection{Modifying germs of maps to \texorpdfstring{$\Autalg(\C^n)$}{Autalg}}\label{sec:germs}

In Section \ref{sec:KR} we will recall the method of Kraft-Russell to remove a single pole of $\psi$ in \eqref{eq:psi}. For this, we will need to work locally, with the space of germs of meromorphic maps $X\to \Autalg(\C^{n})$. By Lemma \ref{lem:coefficients}\ref{item:iso_of_schemes}, it can be identified with $\Aut(\A^{n}_{\cM_{X,x}})$. This leads to the following setting.

Let $R$ be an equicharacteristic zero discrete valuation ring with field of fractions $\kk$, maximal ideal generated by $t\in R$, and residue class field $\kappa=R/(t)$. For $\alpha\in R$ we denote by $\bar{\alpha}$ its residue mod $t$.

Denote by $v$ the $t$-adic valuation on $\kk$, and extend it to $\End(\A^n_{\kk})$ by $v(\alpha)=\min \{r\in \Z:t^{-r}\alpha\in \End(\A^{n}_{R})\}$. The following observation will allow us to replace $\alpha\in \End(\A^{2}_{\kk})$ from Lemma \ref{lem:KR} by any $\beta$ which agrees with $\alpha$ up to a sufficiently high order.

\begin{lem}\label{lem:perturbing_composition}
	Fix $\alpha_{1},\dots,\alpha_{m}\in \End(\A^{n}_{\kk})$ such that $\alpha_{m}\circ \dots\circ \alpha_{1}\in \End(\A^{n}_{R})$. Then there is $r\in \Nn$ such that for every $\beta_{1},\dots, \beta_{m}\in \End(\A^{n}_{\kk})$  satisfying $v(\alpha_{j}-\beta_{j})\geq r$, we have $\beta_{m}\circ\dots\circ\beta_{1}\in \End(\A^{n}_{R})$.
\end{lem}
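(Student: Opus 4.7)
The plan is to view iterated composition as a polynomial map on coefficient spaces of bounded-degree endomorphisms, and then invoke continuity of polynomial maps in the $t$-adic topology together with the fact that $\End^{\leq D}(\A^n_R)$ is $t$-adically open inside $\End^{\leq D}(\A^n_\kk)$.

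First, I would fix a uniform degree bound $D$ for all the $\alpha_j$ (and implicitly for the $\beta_j$, available from the ambient polynomial-automorphism setting in the paper). Identifying $\End^{\leq D}(\A^n_\kk)$ with the coefficient affine space $\kk^M$, iterated composition
\[
\mu\colon \bigl(\End^{\leq D}(\A^n_\kk)\bigr)^m\to \End^{\leq D^m}(\A^n_\kk),\qquad (\gamma_1,\dots,\gamma_m)\mapsto \gamma_m\circ\dots\circ\gamma_1,
\]
becomes a polynomial map in the coefficients: each output coefficient is a \emph{universal} polynomial $F$ with integer coefficients in the input coefficients (this is just the expansion of the nested substitution).

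Next, I would substitute $\beta_j=\alpha_j+\epsilon_j$, with $v(\epsilon_j)\geq r$, and expand $\mu(\beta_1,\dots,\beta_m)-\mu(\alpha_1,\dots,\alpha_m)$ coefficient-by-coefficient using the multivariable Taylor formula for $F$ over $\Z$. Each term of the expansion contains at least one factor from some $\epsilon_j$—contributing $t$-adic valuation $\geq r$—while the remaining factors are monomials in the (fixed) coefficients of the $\alpha_i$'s, whose valuations are uniformly bounded below by some integer $-N$ depending only on $(\alpha_1,\dots,\alpha_m)$ and $D$. Summing finitely many such terms gives the estimate $v\bigl(\mu(\beta_1,\dots,\beta_m)-\mu(\alpha_1,\dots,\alpha_m)\bigr)\geq r-N$.

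Choosing any $r\geq N$ therefore places the difference in $\End(\A^n_R)$; combined with the hypothesis $\mu(\alpha_1,\dots,\alpha_m)\in\End(\A^n_R)$, this yields $\mu(\beta_1,\dots,\beta_m)\in\End(\A^n_R)$, as required. The main subtlety is the uniform degree bound on the $\beta_j$: without it, a high-degree $t$-adically ``small'' perturbation could still produce large negative-valuation contributions to the composition (e.g.\ $\alpha_1=t^{-1}z,\,\alpha_2=tz$ with $\beta_2=\alpha_2+t^{r}z^{k}$ for $k\gg r$). In the paper's applications this bound is automatic, which is what justifies reducing to the finite-dimensional polynomial picture in the first step.
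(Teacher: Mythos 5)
Your proof is correct and follows essentially the same route as the paper's: both express the iterated composition as universal integer polynomials in the coefficients, expand the perturbed composition term by term, and bound the valuation of each correction from below by $r$ minus a uniform quantity (the paper's $sdv_{0}$) controlled by the poles of the $\alpha_{j}$. Your caveat about a degree bound on the $\beta_{j}$ is well taken --- the paper's own proof silently restricts to perturbations supported on the fixed finite monomial set $I$ of the $\alpha_{j}$ (so the statement should be read with that implicit restriction, which holds in all of its applications), exactly as your $\alpha_{1}=t^{-1}z$, $\beta_{2}=tz+t^{r}z^{k}$ example demonstrates.
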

\begin{proof}
	Put $v_{0}=\max\{0,-\min_{j}v(\alpha_{j})\}$. In other words, writing all coefficients of $\alpha_1,\dots,\alpha_m$ as Laurent series in $t$, $v_{0}$ is the maximal order of their poles. Write $\alpha_{j}(z)=\sum_{\iota\in I}a_{j,\iota}z^{\iota}$ for some $a_{j,\iota}\in \kk^{n}$ and a finite set $I\subseteq \Nn^{n}$ (independent of $j$). Put $U=\{1,\dots,m\}\times I\times \{1,\dots, n\}$ and $s=\#U$. By induction on $m$, one easily shows that there is a finite set $V\subseteq \Nn^{n}$ and $p_{\nu}\in (\Z^{[s]})^{n}$, $\nu\in V$, with the following property: for any $g_{j,\iota}\in \kk^{n}$, the composition $\gamma_{m}\circ\dots\circ \gamma_{1}$ of $\gamma_{j}(z)\de \sum_{\iota\in I}g_{j,\iota}z^{\iota}$ equals $z\mapsto \sum_{\nu \in V}p_{\nu}(g)z^{\nu}$. 
	
	Put $d=\max_{\nu\in V, i\in \{1,\dots,n\}}\deg (p_{\nu})_{i}$ and $r=sdv_{0}\geq 0$. Fix  $g_{j,\iota}\in \kk^{n}$ with $v(g_{j,\iota})\geq r$ for all $j\in \{1,\dots, m\}$, $\iota\in I$. We claim that $p_{\nu}(a+g)\in R^{n}$ for all $\nu\in V$. 
	Write the $i$-th coordinate of $p_{\nu}\in \Z[y_1,\dots,y_s]^{n}$ as 
	$p_{\nu}(y)_{i}=\sum_{\iota\in \Nn^{s}}c_{\iota}y^{\iota}$, $c_{\iota}\in \Z$, where $i\in \{1,\dots, n\}$. Then 
	\begin{equation*}  p_{\nu}(a+g)_{i}=\sum_{\iota\in \Nn^{s}}c_{\iota}\prod_{u\in U}(a_{u}+g_{u})^{\iota_u}=\sum_{\iota\in \Nn^{s}}c_{\iota}\prod_{u\in U}(\sum_{k=0}^{\iota_{u}}\tbinom{\iota_{u}}{k}a_{u}^{\iota_{u}-k}g_{u}^{k})=p_{\nu}(a)_{i}+\sum_{j\in J}\mu_{j} \eta_{j},\quad\eta_{j}\de\prod_{u\in U}a_{u}^{k_{u,j}}g_{u}^{l_{u,j}} 
	\end{equation*}
	for some finite $J$, $\mu_{j}\in \Z$ and $k_{u,j},l_{u,j}\in \{0,\dots, d\}$ such that for every $j\in J$, there is $u\in U$ such that $l_{u,j}\neq 0$. Thus $v(\eta_{j})=\sum_{u\in U}(k_{u,j}v(a_{u})+l_{u,j}v(g_{u}))\geq -sdv_{0}+r=0$. It follows that $\sum_{j}\mu_{j}\eta_{j}\in R$.
	
	By assumption, $p_{\nu}(a)_{i}\in R$, since it is a coefficient of $\alpha_{m}\circ\dots\circ\alpha_{1}\in\End(\A^{n}_{R})$. Therefore, $p_{\nu}(a+g)_{i}\in R$, so $p_{\nu}(a+g)\in R^{n}$, as claimed. Thus for $\beta_{j}=\alpha_{j}+\gamma_{j}$ we have  $\beta_{m}\circ \dots\circ\beta_{1}\in\End(\A^{n}_{R})$ as required.
\end{proof}

\subsection{Linearizing group actions on the affine plane}\label{sec:prelim_lin}

%Our goal is to linearize families of holomorphic actions on $\C^2$, which by Lemma \ref{lem:coefficients}\ref{item:iso_of_schemes} are the same as elements of $\Aut(\A^{2}_{\cO(X)})$. Therefore, we study $\Aut(\A^{2}_{R})$ for more general $\kappa$-algebras $R$, with quotient field $\kk$. This will be applied to $\kappa=\C$ and  $R=\cO(X)$, $\kk=\cM(X)$, or locally, to $R=\cO_{X,z}$, $\kk=\cM_{X,z}$.

Given a ring extension $R\subseteq S$, we identify $\Aut(\A^{n}_{R})$ with a subgroup of $\Aut(\A^n_{S})$ via base change $\Spec S\to \Spec R$. We treat $\Gl_{n}(R)$ as a subgroup of $\Aut(\A^{n}_{R})$, too. 

Let $\kappa$ be a field, let $G$ be a reductive, $\kappa$-linear group, and let $R$ be a ring containing $\kappa$. Assume that $G\subseteq \Aut(\A^{n}_{R})$. We say that $\psi\in \Aut(\A^{n}_{R})$ \emph{linearizes $G$ over $R$} if
\begin{equation}\label{eq:lin_R}
\mbox{there is a representation } \rho\colon G\to\Gl_{n}(\kappa) \mbox{ such that }\psi\circ g \circ \psi^{-1}=\rho(g) \mbox{  for every } g\in G.
\end{equation}
We say that $G$ is \emph{linearizable over $R$} if there is such $\psi$. Note that for $\kappa=\C$ and a Stein space $X$, $G$ is linearizable over $\cO(X)$ if and only if the family $G\times (X\times \C^n)\ni (g,(x,z))\mapsto (x,g(x)\cdot z)$  is algebraically linearizable in the sense of Definition \ref{def:family}.
\smallskip

We now restrict our attention to $n=2$. Then the Jung--van der Kulk theorem \cite{Jung,van_der_Kulk}, see \cite[Theorem 3.3]{Nagata_Aut-C^2}, asserts that for any field $\kk$, $\Aut(\A^{2}_{\kk})$ is an amalgamated product of the affine group $\Aff\de \Aff_{2}(\kk)=\kk_{+}\rtimes \Gl_{2}(\kk)$ and the group of \emph{elementary automorphisms}
\begin{equation*}
E=\{(z_1,z_2)\mapsto (\alpha z_1+p(z_2),\beta z_2 +\beta'): \alpha,\beta\in \kk^{*},\beta'\in \kk,\ p\in \kk^{[1]}\}.
\end{equation*}
For a subgroup $G\subseteq \Aut(\A^{2}_{\kk})$, we write $\Aut^{G}(\A^{2}_{\kk})=\{\phi\in \Aut(\A^{2}_{\kk}):\forall_{g\in G}\, \phi\circ g=g\circ \phi\}$.  If $H\subseteq \Aut(\A^{2}_{\kk})$, we denote by $H^{G}\de H\cap\Aut^{G}(\A^{2}_{\kk})$ the centralizer of $G$ in $H$: in particular, $\Aff^{G}= \Aff\cap \Aut^{G}(\A^{2}_{\kk})$, $E^{G}= E\cap \Aut^{G}(\A^{2}_{\kk})$, $\Gl_{2}^{G}(\kk)= \Gl_{2}(\kk)\cap \Aut^{G}(\A^{2}_{\kk})$, etc.
\smallskip

The following lemma is a well known consequence of van der Kulk theorem, see \cite[Corolary 4.4]{Kambayashi_linearization} or \cite[Proposition 2.2]{Kuroda_subgroups}. We sketch the argument in our setting following \cite[Lemma 3.1]{KR_families-of-group-actions}. The assumption that $G$ is split is automatically satisfied e.g.\ if $\kappa$ is algebraically closed \cite[17.16]{Milne_RG}, which will be the case when  $\kappa=\C$, $R=\cO(X)$ or $\cO_{X,x}$.

\begin{lem}\label{lem:generic_linearization}
	Let $\kappa\subseteq \kk$ be a field extension, and let $G$ be a split reductive $\kappa$-linear group. Assume that $G\subseteq \Aut(\A^{2}_{\kk})$. Then $G$ is linearizable over $\kk$.
\end{lem}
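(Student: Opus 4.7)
The plan is to use the Jung--van der Kulk theorem to reduce to the case $G\subseteq \Aff$ or $G\subseteq E$, and then to conjugate into $\Gl_{2}(\kappa)$ by a Levi--Mostow decomposition.

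First, by van der Kulk's theorem, $\Aut(\A^{2}_{\kk})=\Aff *_{\Aff\cap E} E$, which realizes $\Aut(\A^{2}_{\kk})$ as acting on the associated Bass--Serre tree $\cT$, whose vertex stabilizers are conjugates of $\Aff$ and of $E$. The uniqueness of the reduced form implies that an element of $\Aut(\A^{2}_{\kk})$ acting hyperbolically on $\cT$ has strictly increasing degrees under iteration. A consequence, going back to Serre and used in \cite[Lemma 3.1]{KR_families-of-group-actions}, is that any subgroup of $\Aut(\A^{2}_{\kk})$ of bounded degree fixes a vertex of $\cT$, and is therefore conjugate into $\Aff$ or into $E$. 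Since $G$ embeds in $\Aut(\A^{2}_{\kk})$ as a linear algebraic group, it has bounded degree (the bound being controlled by the degrees of the images of a set of algebraic generators). After conjugating by a suitable $\psi_{0}\in \Aut(\A^{2}_{\kk})$, I may therefore assume $G\subseteq H$ for some $H\in\{\Aff,E\}$.

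Second, both $\Aff$ and $E$ fit into a split exact sequence $1\to U\to H\to L\to 1$ with $U$ unipotent and $L$ a linear Levi factor: for $H=\Aff$, $U=\kk^{2}$ (translations) and $L=\Gl_{2}(\kk)$; for $H=E$, $U=\{z\mapsto(z_{1}+p(z_{2}),z_{2}+\beta')\}$ and $L=\{z\mapsto(\alpha z_{1},\beta z_{2})\}\cong(\kk^{*})^{2}\subseteq\Gl_{2}(\kk)$. Since $G$ is reductive, every morphism $G\to U$ is trivial, so the projection $G\to L$ is injective. In characteristic zero, any such lift is conjugate within $H$ to the inclusion $L\hookrightarrow H$ by the Levi--Mostow theorem (equivalently, by the vanishing of $H^{1}$ of a reductive group with values in a unipotent module), so there exists $\psi_{1}\in U$ with $\psi_{1} G \psi_{1}^{-1}\subseteq L\subseteq\Gl_{2}(\kk)$. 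Since $L$ is defined over $\kappa$ and $G$ is split reductive over $\kappa$, the resulting composition yields a representation $\rho\colon G\to \Gl_{2}(\kappa)$, and $\psi\de \psi_{1}\circ\psi_{0}\in\Aut(\A^{2}_{\kk})$ linearizes $G$ as required.

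The main obstacle is the first step: verifying that bounded-degree subgroups of $\Aut(\A^{2}_{\kk})$ are conjugate into $\Aff$ or $E$. This relies on a careful interplay between the degree function and the reduced-form length in the amalgam, since an unbounded cyclic subgroup would translate along an axis in $\cT$ with strictly increasing degrees; ruling this out for the whole of $G$ then gives a common fixed vertex. The Levi decomposition in the second step, by contrast, is classical and routine in characteristic zero.
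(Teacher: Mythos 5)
Your proposal follows essentially the same route as the paper: bounded degree plus Serre's fixed-point theorem for the amalgam $\Aff * E$ to conjugate $G$ into $\Aff$ or $E$, then reductivity to push it into $\Gl_{2}(\kk)$ (the paper states this step tersely; your Levi--Mostow argument is the standard justification), then splitness to descend to $\kappa$.

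One imprecision at the end: after conjugating $G$ into $L(\kk)\subseteq\Gl_{2}(\kk)$ you obtain a representation with entries in $\kk$, not in $\kappa$; the fact that $L$ is defined over $\kappa$ does not by itself place the image in $\Gl_{2}(\kappa)$. Splitness of $G$ over $\kappa$ gives that this $\kk$-representation is isomorphic \emph{over $\kk$} to the extension of scalars of some $\rho\colon G\to\Gl_{2}(\kappa)$, so one further conjugation by a matrix $\alpha\in\Gl_{2}(\kk)$ is needed, and $\alpha$ must be absorbed into your $\psi$ (the paper does this explicitly, citing \cite[20.7]{Milne_RG}). This is a routine fix, not a flaw in the strategy.
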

\begin{proof}
	Since $G\subseteq \Aut(\A^{2}_{\kk})$ is algebraic, it has bounded length as a subgroup of $\Aff * E$, which by \cite[Theorem 8]{Serre_trees} implies that $\phi^{-1}\circ G\circ \phi\subseteq \Aff$ or $E$ for some $\phi\in \Aut(\A^{2}_{\kk})$, see \cite[Corollary 1]{Kraft-challenging_problems}. Because $G$ is reductive, it follows that $\phi^{-1}\circ G\circ \phi\subseteq \Gl_{2}(\kk)$. Since by assumption $G$ is split over $\kappa$, by \cite[20.7]{Milne_RG} the representation $G\ni g\mapsto \phi^{-1}\circ g \circ \phi\in \Gl_{2}(\kk)$ is isomorphic, over $\kk$, to the extension of some representation $\rho\colon G\to \Gl_{2}(\kappa)$. More explicitly, there is $\alpha\in \Gl_{2}(\kk)$ such that $\alpha^{-1}\circ  (\phi^{-1}\circ g\circ \phi) \circ \alpha=\rho(g)\in \Gl_{2}(\kappa)$ for all $g\in G$. Thus $\psi\de \phi\circ \alpha$ linearizes $G$ over $\kk$.
\end{proof}

Once $\kk$ gets replaced by a ring $R$, no analogue of van der Kulk theorem is available, and Lemma \ref{lem:generic_linearization} becomes a difficult problem. It was solved by Sathaye \cite{Sathaye_DVR} if $R$ is a discrete valuation ring, and by Kraft and Russell \cite[Theorem D]{KR_families-of-group-actions} if $R$ is a coordinate ring of a factorial affine curve. More generally, \cite[Theorem 1.1]{Kuroda_subgroups} solves it in case $R$ is a PID or even UFD (under some assumptions).

%In our Theorem \ref{thm:main}, $R=\cO(X)$ is ring of holomorphic function on an open Riemann surface. A key step in our proof is a (slightly modified) argument of Kraft and Russell, given in Lemma \ref{lem:KR} below. %We note that Sathaye's method is crucial in both.
%Its more general version is proved in \cite[Theorem 1.1(i)]{Kuroda_subgroups}, see Lemma 3.2 loc.\ cit. for the key claim. 

\section{Removing poles after Kraft--Russell--Sathaye}\label{sec:KR}

In this section, we prove a result which, applied to $R=\cO_{X,x}$, will allow us to remove the pole at $x\in X$ of $\psi$ satisfying \eqref{eq:psi}. We follow the approach of Kraft and Russell \cite[Lemma 3.3]{KR_families-of-group-actions},  based on an algebraic lemma due to Sathaye \cite{Sathaye_DVR}. A similar argument is used in the proof of \cite[Theorem 1.1(i)]{Kuroda_subgroups}, see Lemma 3.1 loc.\ cit. for the key claim. 

We keep the notation introduced in Section \ref{sec:germs}.

\setcounter{claim}{0}
\begin{lem}%[{\cite[Lemma 3.3]{KR_families-of-group-actions}}]
	\label{lem:KR}
	Let $R$ be an equicharacteristic zero discrete valuation ring with field of fractions $\kk$ and residue field $\kappa\subseteq R$. Let $G$ be a reductive $\kappa$-linear subgroup of $\Aut (\A^{2}_{R})$. Assume that there is $\psi\in \Aut(\A^{2}_{\kk})$ which linearizes $G$ over $\kk$, see \eqref{eq:lin_R}. Then there is $\alpha\in \Aut^{G}(\A^{2}_{\kk})$ such that $\psi\circ \alpha\in \Aut(\A^{2}_{R})$.% and $\psi\circ \alpha$ linearizes $G$ over $R$.         
\end{lem}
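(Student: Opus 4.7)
The plan is to reformulate the problem as the existence of a $\rho(G)$-equivariant pair of generators for an $R$-subalgebra of $\kk[z_{1},z_{2}]$ naturally associated to $\psi$. Setting $\beta\de\psi\circ\alpha\circ\psi^{-1}$, one has $\alpha\in\Aut^{G}(\A^{2}_{\kk})$ if and only if $\beta\in\Aut^{\rho(G)}(\A^{2}_{\kk})$, so the task reduces to finding $\beta\in\Aut^{\rho(G)}(\A^{2}_{\kk})$ with $\beta\circ\psi\in\Aut(\A^{2}_{R})$. Writing $h_{i}\de(\psi^{-1})^{*}(z_{i})\in\kk[z_{1},z_{2}]$, let $A\de R[h_{1},h_{2}]=\{f\in\kk[z_{1},z_{2}]:\psi^{*}(f)\in R[z_{1},z_{2}]\}$; this is a polynomial $R$-algebra in two variables with $A\otimes_{R}\kk=\kk[z_{1},z_{2}]$. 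The condition $\beta\circ\psi\in\Aut(\A^{2}_{R})$ is equivalent to $\beta^{*}$ being an $R$-algebra isomorphism $R[z_{1},z_{2}]\to A$, and $\beta\in\Aut^{\rho(G)}(\A^{2}_{\kk})$ means this isomorphism intertwines the $\rho(G)$-actions on both sides. A direct computation using $g=\psi^{-1}\circ\rho(g)\circ\psi\in\Aut(\A^{2}_{R})$ shows that $A$ is stable under the linear $\rho(G)$-action on $\kk[z_{1},z_{2}]$: for $h=(\psi^{-1})^{*}(f)\in A$, one has $\rho(g)^{*}(h)=(\psi^{-1})^{*}(g^{*}(f))\in A$ since $g^{*}$ preserves $R[z_{1},z_{2}]$.

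The crucial step is then to exhibit a free $R$-submodule $V\subseteq A$ of rank two which is $\rho(G)$-stable, isomorphic over $R$ to the standard representation on $Rz_{1}+Rz_{2}$, and which generates $A$ as an $R$-algebra. Granted such $V$, one defines $\beta^{*}$ as the $R$-algebra isomorphism $R[z_{1},z_{2}]\to A$ sending $z_{1},z_{2}$ to a $\rho(G)$-equivariant basis of $V$, and both required properties of $\beta$ follow. To produce $V$, one invokes Sathaye's algebraic lemma \cite{Sathaye_DVR}, which supplies polynomial generators of $A$ over $R$ with good reduction modulo the uniformizer. Combined with reductivity of $G$, this allows a decomposition of $A$ into $\rho(G)$-isotypic components over $R$, inside which one locates the summand whose $\kk$-extension recovers the standard $\rho$-representation $\kk z_{1}+\kk z_{2}$.

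The main obstacle will be this last equivariant selection: \emph{a priori} the $\rho(G)$-action on $A$ does \emph{not} preserve the natural $R$-submodule $Rh_{1}+Rh_{2}$ coming from the algebra presentation $A=R[h_{1},h_{2}]$, so this submodule must be replaced by a different $\rho(G)$-stable $R$-form. Its existence rests on complete reducibility of $\rho(G)$-representations over $R$, together with the rigidity of representations of the split reductive group $G$ under the flat base change $\kappa\to R$ --- this is where the hypothesis that $G$ is $\kappa$-linear is used essentially, ensuring that $\rho(G)$ is defined over $\kappa\subseteq R$ and that the isomorphism between the standard representation over $\kappa$ and its $R$-form lifts from residue. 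The overall argument is a direct adaptation of \cite[Lemma 3.3]{KR_families-of-group-actions}, with the $\rho(G)$-equivariance carried throughout in parallel with the Sathaye-type algebraic analysis.
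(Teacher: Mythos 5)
Your reduction is correct as far as it goes: with $A=\{f\in\kk[z_1,z_2]:\psi^{*}f\in R[z_1,z_2]\}=R[h_1,h_2]$, the lemma is indeed equivalent to finding a $\rho(G)$-stable free rank-two $R$-submodule $V\subseteq A$ which generates $A$ as an $R$-algebra and whose basis realizes the standard representation. But this reformulation restates the lemma rather than reducing it, and the step you yourself flag as crucial is exactly where the proof is missing. The tools you name do not produce $V$: complete reducibility over $R$ (via the Reynolds operator) and rigidity of representations under the base change $\kappa\to R$ let you split any finite $\rho(G)$-stable free submodule equivariantly, but the isotypic component of the standard representation inside $A$ contains infinitely many copies of that representation, in all degrees, and nothing in your argument explains why one of them \emph{generates} $A$ as an $R$-algebra. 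This is not a technicality. Take $\psi^{-1}=(z_1+z_2^{2}/t,\ tz_2)$, which commutes with $\rho(g)=(z_1,-z_2)$ and satisfies the hypotheses of the lemma for $G=\Z/2$: here the linear part of $A$ is $R+Rt^{3}z_1+Rtz_2$, and $R[t^{3}z_1,tz_2]\subsetneq A$ because $z_1+z_2^{2}/t\notin R[t^{3}z_1,tz_2]$; the correct equivariant generating pair is $(z_1+z_2^{2}/t,\ tz_2)$, which involves a degree-two element. So the desired $V$ cannot be located by splitting the obvious filtration of $A$, and in general its generators sit in arbitrarily high degree. Likewise, Sathaye's criterion only certifies that an algebra is a polynomial ring (which you already know for $A$); it does not supply equivariant variables.

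The paper's proof supplies precisely the two ingredients your sketch omits. First, after normalizing $v(\psi)=0$, it analyzes the reduction mod $t$: the image curve $C=\bar{\psi}(\A^{2}_{R})\subseteq\A^{2}_{\kappa}$ is $\rho(G)$-invariant and satisfies $\kappa[C]\cong\kappa^{[1]}$ by Sathaye's Remark 2.1, whence by the Abhyankar--Moh--Suzuki theorem its equation is a coordinate; a careful choice of the complementary coordinate yields $\tau\in\Aut(\A^{2}_{\kappa})$ with $t\mid(\tau\circ\psi)^{*}z_1$ that simultaneously diagonalizes $\rho(G)$. Second, it runs a descending induction on the pole order $w(\psi)=w_1(\psi)+w_2(\psi)$, composing $\psi$ with the $G$-equivariant map $(\tau\circ\psi)^{-1}\circ\gamma\circ(\tau\circ\psi)$ for a suitable diagonal $\gamma$ and verifying that $w$ strictly drops. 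Without an analogue of both steps --- the equivariant Abhyankar--Moh--Suzuki input and the induction that improves the generating pair one order at a time --- your ``equivariant selection'' of $V$ remains an assertion equivalent to the lemma itself.
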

\begin{proof}
    By \eqref{eq:lin_R}, there is a representation $\rho\colon G\to \Gl_{2}(\kappa)$ such that $\psi\circ g \circ \psi^{-1}=\rho(g)$ for all $g\in G$.
    
	Define $\beta\in \Aut(\A^{2}_{\kk})$ by $\beta(z_1,z_2)=(t^{-v(\psi)}z_1,t^{-v(\psi)}z_2)$. Then $\beta\circ \rho(g)=\rho(g) \circ\beta$, so $\alpha\de \psi^{-1}\circ\beta\circ\psi\in \Aut^{G}(\A^{2}_{\kk})$. Replacing $\psi$ by $\psi\circ \alpha$, we can assume $v(\psi)=0$, so $\psi\in \End(\A^{2}_{R})$. Assume $\psi\not\in\Aut(\A^2_R)$.
	
%	Let $V\subseteq R[z_1,z_2]$ be a vector space over $\kappa$ spanned by $\psi^{*}z_1$ and $\psi^{*}z_2$. By \eqref{eq:lin_R}, $(V,\rho)$ is a representation of $G$ over $\kappa$. %; and  $\bar{\psi}\colon \A^{2}_{\kappa}\to \bar{V}$ is a $G$-equivariant morphism between affine spaces.
%	Since $v(\psi)=0$, the induced representation $\bar{V}=(\bar{V},\rho)$ is nontrivial, i.e.\ $\dim_{\kappa}(\bar{V})\geq 1$.

	\begin{claim*}\label{cl:N}
		There is $\tau\in \Aut(\A^{2}_{\kappa})$ such that $t|(\tau\circ \psi)^{*}z_1$ and for every $g\in G$,  $\tau\circ\rho(g)\circ \tau^{-1} \in \Gl_{2}(\kappa)$ is diagonal.
	\end{claim*}
	\begin{proof}
	Since $\psi\in \End(\A^2_{R})$, taking residues mod $t$ gives a proper map $\bar{\psi}\colon \A^{2}_{R}\to \A^{2}_{\kappa}$, which is equivariant with respect to the action of $G$ on the source and $\rho(G)$ on the target. The image $C\de \bar{\psi}(\A^{2}_{R})$ is $\rho(G)$-invariant, and $\dim C=1$: indeed, $\dim C>0$ since  $v(\psi)=0$, and $\dim C\neq 2$ since $\psi\not\in\Aut(\A^2_{R})$. Let $f\in \kappa^{[2]}$ be the generator of the ideal of $C$. Since $C$ is $\rho(G)$-invariant, the line $\kappa\cdot f\subseteq \kappa^{[2]}$ is $\rho(G)$-invariant, too, i.e.\ for every $g\in G$, $f\circ \rho(g)= \lambda(g)\cdot f$ for some homomorphism $\lambda\colon G\to \kappa^{*}$. 
	
	By \cite[Remark 2.1]{Sathaye_DVR}, $\kappa[C]=\kappa^{[2]}/(f)\cong \kappa^{[1]}$. The Abhyankar--Moh--Suzuki theorem \cite{AbhMoh_the_line_thm,Suzuki_AMSthm} implies that $f$ is a coordinate of $\kappa^{[2]}$, i.e. $\kappa^{[2]}=\kappa[f,h]$ for some $h\in \kappa^{[2]}$. Put $\tau\de (f,h)\in \Aut (\A ^2_{\kappa})$. Then $\tau^{*}z_{1}=f\in \ker \bar{\psi}^{*}$, so $t|\psi^{*}\tau^{*}z_1=(\tau\circ\psi)^{*}z_1$, as required. It remains to prove that $\tau\circ\rho(g)\circ\tau^{-1}$ is diagonal.% matrix.
	\smallskip
	
	Consider the case $\deg f=1$. Then $C$ is an eigenspace of the linear action of $\rho(G)$. Since $G$ is reductive, we can take $h$ to be a generator of another eigenspace, i.e.\ $\deg h=1$ and $h\circ \rho(g)=\mu(g) \cdot h$ for another homomorphism $\mu\colon G\to \kappa^{*}$. Thus $\tau \circ \rho(g)=(\lambda(g)f,\mu(g)h)=\delta(g)\circ \tau$ for a diagonal $\delta(g)=(\lambda(g),\mu(g))$, as needed. 
	
	Consider the case $\deg f>1$. Then \cite[Theorem 8.5, Chapter 6]{Cohn} shows that replacing, if necessary, $h$ by $h-s f^{d}$ for some $s\in \kappa^{*}$ and $d\in \Nn$ , we can assume $\deg h<\deg f$. Now for any $g\in G$, we have $\tau\circ\rho(g)=(\lambda(g)f,h_{g})$, where $h_{g}\de \rho(g)^{*}h$ has the same properties as $h$, that is, $\deg h_{g}=\deg h<\deg f$ and $\kappa[f,h]=\kappa[f,h_{g}]$. It follows that $h_{g}=\mu(g)(h)$ for some homomorphism $\mu\colon G\to \Aff_{1}(\kappa)$. Because $G$ is reductive, $\mu(G)$ is conjugate to a subgroup of $\kappa^{*}$. Replacing $h$ by some $h+c$, $c\in \kappa$ we can assume that in fact $\mu(G)\subseteq \kappa^{*}$. As before, we conclude that $\tau\circ\rho(g)=\delta(g)\circ \tau$ for a diagonal $\delta(g)\de (\lambda(g),\mu(g))$.
\end{proof}

	Let us now introduce the following notation. For $\phi\in \Aut(\A^{2}_{\kk})\cap \End(\A^2_{R})$ and $j\in \{1,2\}$ put  $w_{j}(\phi)=-v((\phi^{-1})^{*}z_j)$, that is, $w_{j}(\phi)$ is the minimal number $w$ such that $t^{w}z_{j}$ is contained in the pullback $\phi^{*}R[z_1,z_2]\subseteq R[z_1,z_2]$. The latter inclusion holds since $\phi\in \End(\A^2_{R})$. Note that if $\tau\in \Aut(\A^{2}_{R})$ then $w_{j}(\tau\circ \phi)=w_{j}(\phi)$. Put $w(\phi)=w_{1}(\phi)+w_{2}(\phi)$. Then $w(\phi)\geq 0$, with equality if and only if $\phi\in \Aut(\A^{2}_{R})$.

	We need to show that there exists $\alpha\in \Aut^{G}(\A^{2}_{\kk})$ such that $\psi\circ\alpha\in \End(\A^{2}_{R})$ and $w(\psi\circ\alpha)<w(\psi)$. The lemma will then follow by induction on $w(\psi)$. 
	\smallskip

	Since $\psi\not\in \Aut(\A^2_{R})$, we have $w(\psi)>0$, so, say, $w_{1}(\psi)>0$. Let $\tau$ be as in the Claim above. Define 
	\begin{equation*}
		\gamma\in \Aut(\A^{2}_{\kk}), \quad \gamma(z_1,z_2)=(t^{-r_1}z_1,t^{-r_2}z_2),\quad\mbox{where}\quad r_j=v((\tau \circ \psi)^{*}z_{j});
	\end{equation*}
	and put $\alpha\de (\tau\circ \psi)^{-1}\circ \gamma\circ (\tau\circ \psi)\in \Aut(\A^{2}_{\kk})$. Now \eqref{eq:lin_R} and the fact that $\gamma$ commutes with $\tau\circ \rho(g)\circ\tau^{-1}$ since they are both diagonal, imply that  $\alpha\in \Aut^{G}(\A^{2}_{\kk})$. Indeed, $\tau^{-1}\circ \gamma\circ \tau\circ\rho(g)=\rho(g)\circ \tau^{-1}\circ \gamma\circ\tau$, so%, so by \eqref{eq:lin_R}:
	\begin{equation*}	
		\alpha\circ g =  
		\alpha \circ \psi^{-1} \circ \rho(g)\circ \psi =
		\psi^{-1}\circ\tau^{-1}\circ \gamma\circ \tau\circ\rho(g)\circ\psi=
		\psi^{-1}\circ \rho(g)\circ \tau^{-1}\circ \gamma\circ\tau\circ \psi=
		g\circ \alpha.
	\end{equation*}
	Put $w_j=w_j(\psi)$, $w_j'=w_j(\psi\circ\alpha)$. Since $\tau\in \Aut(\A^2_{\kappa})\subseteq \Aut(\A^2_R)$, we have $w_j=w_j(\tau\circ\psi)$ and $w'_{j}=w_{j}(\tau\circ\psi\circ\alpha)=w_{j}(\gamma\circ\tau\circ\psi)$. 
	Definition of $r_j$ implies that $w_{j}'\geq 0$, so $\psi\circ\alpha\in \End(\A^2_{R})$. Since $\psi\in \End(\A^{2}_{R})$, too, we have $r_j\geq 0$, hence $w_j'\leq w_j(\tau\circ\psi)=w_j$. It remains to show that, say, $w_1'< w_1$.
	
	 Write $\tau\circ\psi(z_1,z_2)=(\tilde{y}_1,\tilde{y}_2)$ and $\gamma\circ\tau\circ\psi(z_1,z_2)=(y_1,y_2)$. Since $t^{w_1}z_1\in (\tau\circ \psi)^{*}R[z_1,z_2]$, we have
	\begin{equation}\label{eq:w}
	t^{w_1}z_1=\sum_{i,j\geq 0} a_{ij}\tilde{y}_1^{i}\tilde{y}_2^{j}=
	\sum_{i,j\geq 0} a_{ij}t^{r_{1}i+r_{2}j}y_{1}^{i}y_{2}^{j}
	\end{equation}
	for some $a_{ij}\in R$. %, not all contained in $\m$. 
	We have $w_1>0$ by assumption, and $r_1>0$ by the Claim. If $r_2>0$, too, then \eqref{eq:w} gives $\bar{a}_{00}=0$, and dividing \eqref{eq:w} by $t$ we infer that $t^{w_1-1}z_1\in R[y_{1},y_{2}]=(\gamma\circ\tau\circ\psi)^{*}R[z_1,z_2]$, so $w'_1\leq w_1-1$, as claimed. Assume $r_2=0$. By definition of $r_2$, $t\nmid (\gamma\circ \tau\circ \psi)^{*}z_{2}$, i.e.\ $\bar{y}_2\neq 0$. Now \eqref{eq:w} gives $\sum_{j} \bar{a}_{0j}\bar{y}_{2}^{j}=0$, so $\bar{a}_{0j}=0$ for all $j$. Again, dividing \eqref{eq:w} by $t$ gives $w_1'\leq w-1$; which ends the proof.
\end{proof}

\begin{rem}[{cf.\ \cite[Remark 3.1]{KR_families-of-group-actions}}]\label{rem:deg_alpha}
	The most difficult part of Lemma \ref{lem:KR}, when \cite{AbhMoh_the_line_thm} and \cite{Sathaye_DVR} are used, is the case when $\rho(G)$ acts on the curve $C=\bar{\psi}(\A^{2}_{\kk})\cong \A^{1}_{\kappa}$, of degree at least two in $\A^{2}_{\kappa}$. The action of $\rho(G)$ on $C$ is faithful, since it is such on $\sspan_{\kappa}(C)=\A^{2}_{\kappa}$. Thus in this case, $G$ embeds in $\kappa^{*}$.% can be realized as a subgroup of $\kappa^{*}$. 
	
	On the other hand, if $G\not\subseteq \kappa^{*}$, then the required $\alpha$ is constructed before the Claim. The corresponding $\rho(G)$-equivariant automorphism $\beta\de \psi\circ \alpha\circ \psi^{-1}$ is \emph{affine} (in fact, diagonal). In Lemma \ref{lem:Aut_Lie-group} we will see that in this case \emph{all} $\rho(G)$-equivariant automorphisms of $\A^{2}_{\kk}$ are affine.%, if $G$ acts linearly and $G\not\subseteq \kappa^{*}$. 
	
	However, if $G\subseteq \kappa^{*}$ then the corresponding $\beta\in \Aut^{\rho(G)}(\A^{2}_{\kappa})$ is constructed inductively, by composing maps of the form $\tau^{-1}\circ \gamma\circ \tau$, where $\gamma$ is affine, but $\tau$ comes from the Abhyankar--Moh--Suzuki theorem. Thus $\deg\beta$ depends both on the degrees of $\tau$'s, and  on the number of inductive steps, which in turn depends on the order $w(\psi)$ of the pole of $\psi^{-1}$. Therefore, it seems that this method cannot be used to remove infinitely many poles at once, see Remark \ref{rem:alg_open}.
\end{rem}	

\section{Equivariant automorphisms of the affine plane}\label{sec:FM}

In this section, we use the Jung -- van der Kulk theorem to investigate the structure of $\Aut^{G}(\A^{2}_{\kk})$ in more detail. Our approach extends the one of Friedland and Milnor \cite{FM_Dynamical-properties} to the equivariant setting. 

Here, $\kk$ is an arbitrary field of characteristic zero. The results of \cite{FM_Dynamical-properties} are stated over $\kk=\R$ or $\C$, but those which are relevant for us hold over $\kk$, too. We keep the notation introduced in Section \ref{sec:prelim_lin}.%: in any case, we will outline the proofs, partially to settle the notation.
\smallskip

By \cite{van_der_Kulk}, any $\phi\in \Aut(\A^{2})\setminus \Aff$ can be written as $a_{m+1}\circ e_{m}\circ a_{m}\circ\dots\circ e_{1}\circ a_{1}$ for some $m\geq 0$, with elementary $e_{j}\in E\setminus \Aff$ and affine $a_{j}\in \Aff$, such that $a_{2},\dots, a_{m}\not\in E$. Moreover, this decomposition is unique up to replacing pairs $(e_{j},a_{j})$ by $(e_{j}\circ s, s^{-1}\circ a_{j})$ for some $s\in \Aff\cap E$. Therefore, the sequence  $(\deg e_{1},\dots, \deg e_{m})$, called a \emph{polydegree} of $\phi$, does not depend on the decomposition, and is invariant under a linear change of coordinates. For convenience, we define a polydegree of elements of $\Aff$  as an empty sequence. The set of all automorphisms of polydegree $d=(d_{1},\dots, d_{m})$ will be denoted by $\cA_{d}$. 

For every $\phi\in \Aut(\A^{2})$ of degree at least $2$, there is a unique line $\ll_{\phi}$ through the origin such that  $\deg\phi(\ll_{\phi})<\deg \phi$. By \cite[p.\ 72]{FM_Dynamical-properties}, for each nonempty polydegree $d$, the map
\begin{equation}\label{eq:F-bundle}
\gamma_{d} \colon \cA_{d} \ni \phi \mapsto (\ll_{\phi^{-1}},\ll_{\phi})\in \P^{1}\times \P^{1}
\end{equation}
is a locally trivial fiber bundle. To write down its sections, parametrize $\P^{1}$ so that the lines $\{z_1=0\}$, $\{z_2=0\}$ correspond to $\infty,0\in \P^{1}$. More precisely, fix isomorphisms $\A^{1}\ni \lambda\mapsto \{z_2=\lambda z_1\}\in U_0=\P^{1}\setminus \{\infty\}$ and $\A^{1}\ni \lambda\mapsto \{z_2=\lambda z_1\}\in U_1=\P^{1}\setminus \{0\}$. Put $\cF_{d}=\gamma_{d}^{-1}\{(0,0)\}$. Then sections of $\gamma_d$ can be written as 
	\begin{equation}\label{eq:section}
	U_{\eta}\times U_{\theta}\times \cF_{d} \ni (\lambda,\mu,f)\mapsto a_{\eta, \lambda}\circ f \circ a_{\theta,\mu}^{-1} \in \cA_d,
	\end{equation}
where for $\lambda \in \kk$ we put $a_{0,\lambda}(z_1,z_2)=(z_1,\lambda z_1+z_2)$ and $a_{1,\lambda}(z_1,z_2)=(\lambda z_1+z_2,z_2)$. 

To describe the fiber $\cF_{d}$ over $(0,0)$, we introduce the following notation. First, put 
\begin{equation*}
\hat{S}= \{(z_1,z_2)\mapsto (\alpha z_1 +\alpha', \beta z_2 +\beta') : \alpha,\beta\in \kk^{*}, \alpha',\beta'\in \kk\}\subseteq \Aff\cap E.
\end{equation*} 
Next, for $q\in \kk^{[1]}$ define $e_{q}\in E$ by
\begin{equation*}
	e_{q}(z_1,z_2)=(z_1+z_2q(z_2),z_2).
\end{equation*}
For $s\in \hat{S}$ and an $m$-tuple $q=(q_{1},\dots, q_{m})\in (\kk^{[1]})^{m}$, put
\begin{equation*}
f_{s,q}=s\circ e_{q_m}\circ \tau\circ e_{q_{m-1}}\circ \tau \dots \tau\circ e_{q_1}, \quad \mbox{where } \tau\colon (z_1,z_2)\mapsto (z_1,z_2).
\end{equation*}
Now,  \cite[Lemma 2.10]{FM_Dynamical-properties} asserts that for a polydegree $d=(d_{1},\dots, d_{m})$
\begin{equation}\label{eq:F}
\cF_{d}=\{f_{s,q}: s\in \hat{S},\ q=(q_1,\dots, q_m)\in (\kk^{[1]})^{m},\ \deg q_{j}=d_{j}-1\},
\end{equation}
and $s,q$ are determined uniquely by $f_{s,q}\in \cF_{d}$. %In particular, $\cF_{d}=(\kk^{*})^{N}\times \kk^{M}$ for some $N,M$, so $\cF_{d}$ is an Oka manifold. Therefore, $\cA_{d}$ is Oka, too, see \cite[Lemma 7.1]{FL_Oka-for-groups}. In the following lemmas we adapt this result to the equivariant setting. 
\smallskip

Having introduced the above notation, we pass to the equivariant setting. For $G\subseteq \Gl_{2}(\kk)$, write
%\begin{equation*}
$\cA_{d}^{G}=\cA_{d}\cap\Aut^{G}(\A^{2})$, $\cF_{d}^{G}=\cF_{d}\cap \Aut^{G}(\A^{2})$.
%\end{equation*}
By \eqref{eq:F}, $\cF_{d}\cong (\A^{1}_{*})^{N}\times \A^{M}$ for some $N,M\geq 0$. Our goal is to show that $\cF^{G}_{d}$ is a vanishing set of some of these coordinates.

We will use the following subgroups of $\hat{S}$:
\begin{equation*}\begin{split}
T&= \{(z_1,z_2)\mapsto (\alpha z_1 +\alpha', \beta z_2) : \alpha,\beta\in \kk^{*}, \alpha'\in \kk\}\\
D&= \{(z_1,z_2)\mapsto (\alpha z_1, \beta z_2) : \alpha,\beta\in \kk^{*}\},\quad Z=\{(z_1,z_2)\mapsto (\alpha z_1, \alpha z_2) : \alpha\in \kk^{*}\}=Z(\Gl_{2}(\kk)).
\end{split}
\end{equation*}
%For integers $a,b$, we denote by $\tfrac{1}{k}(a,b)$ the subgroup of $D$ generated by $(x,y)\mapsto (\zeta_{k}^{a}x,\zeta_{k}^{b}y)$. Of course, $\tfrac{1}{k}(a,b)$ depends only on the remainders of $a,b$ mod $k$.

\begin{lem}\label{lem:gf}
	Fix $g\colon (z_1,z_2)\mapsto (\lambda_0 z_1,\lambda_1 z_2)$, $s\colon (z_1,z_2)\mapsto (\alpha z_1+\alpha',\beta z_2 + \beta')\in \hat{S}$ and $q\in (\kk^{[1]})^{m}$. Then 
	\begin{equation*}
	g^{-1}\circ f_{s,q} \circ g=f_{\sigma,p}\quad \mbox{for}\quad 
	\sigma(z_1,z_2)=(\tfrac{\alpha \lambda_{m-1}}{\lambda_{0}} z_1+\tfrac{\alpha'}{\lambda_{0}}, \tfrac{\beta\lambda_{m}}{\lambda_{1}}z_2+\tfrac{\beta'}{\lambda_{1}})\quad\mbox{and}\quad
	p_{j}(z_2)=\tfrac{\lambda_{j}}{\lambda_{j+1}}q_{j}(\lambda_{j}z_2)
	\end{equation*}
	for $j\in \{1,\dots, m\}$, where we put $\lambda_{j}=\lambda_0$ if $2|j$ and $\lambda_j=\lambda_{1}$ if $2\nmid j$.
\end{lem}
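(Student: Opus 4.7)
The plan is to propagate $g$ from right to left through the factorization
\begin{equation*}
f_{s,q} = s \circ e_{q_m} \circ \tau \circ e_{q_{m-1}} \circ \tau \circ \cdots \circ \tau \circ e_{q_1}
\end{equation*}
using two elementary commutation identities. First, for any diagonal map $h(z_1,z_2) = (a z_1, b z_2)$ with $a,b \in \kk^{*}$ and any $q \in \kk^{[1]}$, a direct computation gives $e_q \circ h = h \circ e_{\tilde q}$, where $\tilde q(z_2) = (b/a)\, q(b\, z_2)$. Second, since $\tau$ swaps coordinates, $\tau \circ h = h' \circ \tau$, where $h'$ is $h$ with its diagonal entries transposed.

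Applying these identities, I would set $g_{(0)} := g$ and define $g_{(j)}$ inductively as the diagonal obtained after propagating $g_{(j-1)}$ leftward first through $e_{q_j}$ and then, if $j<m$, through the following $\tau$. The combinatorial heart of the argument is the following claim, proved by induction on $j$: one has $g_{(j)}(z_1,z_2) = (\lambda_j z_1,\lambda_{j+1} z_2)$, and each elementary factor emerges in the new composition as $e_{p_j}$ with $p_j(z_2) = (\lambda_j/\lambda_{j+1})\, q_j(\lambda_j z_2)$. The $e_{q_j}$-passage preserves the diagonal entries and reshapes the polynomial via the first identity, using that $\lambda_{j-1} = \lambda_{j+1}$ (they share parity); the subsequent $\tau$-passage swaps them, sending $(\lambda_{j-1}, \lambda_j)$ to $(\lambda_j, \lambda_{j+1})$.

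Once all $m$ elementary factors and $m-1$ swaps have been crossed, the surviving diagonal immediately to the right of $s$ is $g_{(m)}(z_1,z_2) = (\lambda_{m-1} z_1,\lambda_m z_2)$. A direct computation of $g^{-1} \circ s \circ g_{(m)}$ then produces exactly the $\sigma$ displayed in the statement. The only obstacle is the notational bookkeeping of the alternating indices $\lambda_j$; there is no conceptual difficulty, since the explicit form of $f_{s,q}$ makes the propagation mechanical.
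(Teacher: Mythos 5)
Your proposal is correct and is essentially the paper's own argument: the paper likewise propagates $g$ from right to left through the factorization of $f_{s,q}$ via the identities $e_{q_j}\circ g_{j-1}=g_{j-1}\circ e_{p_j}$, $\tau\circ g_{j-1}=g_j\circ\tau$ and $s\circ g_{m-1}=g_0\circ\sigma$ for $g_j=(\lambda_j z_1,\lambda_{j+1}z_2)$, using $\lambda_{j-1}=\lambda_{j+1}$ exactly as you do. The only nitpick is that your inductive formula $g_{(j)}=(\lambda_j z_1,\lambda_{j+1}z_2)$ should be asserted only for $j<m$ (the last elementary factor is not followed by a $\tau$), but your final paragraph already uses the correct diagonal $(\lambda_{m-1}z_1,\lambda_m z_2)$ next to $s$, so the conclusion stands.
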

\begin{proof}
Put $g_{j}=(\lambda_{j}z_1,\lambda_{j+1}z_2)$, so $g_{0}=g$. A direct computation shows that for $j\in \{1,\dots, m\}$ we have $e_{q_{j}}\circ g_{j-1}=g_{j-1}\circ e_{p_{j}}$, $\tau\circ g_{j-1}=g_{j}\circ \tau$ and $s\circ g_{m-1}=g_{0}\circ \sigma$. The result follows.	
\end{proof}

If $\kk$ contains a $k$-th root of unity, we fix a primitive one $\zeta_{k}$.

\begin{lem}\label{lem:G}
%Fix a generalized polydegree $(d_{1},\dots, d_{m})$. 
Fix $\{\id\} \neq G\subseteq D$ and a polydegree $d=(d_1,\dots,d_m)$ with $m>0$. Assume $\cF_{d}^{G}\neq \emptyset$. Then 
\begin{equation}\label{eq:G}
G=H_{d_{1}}\de \{(z_1,z_2)\mapsto(\lambda^{d_{1}}z_1,\lambda z_2) :\lambda\in H\} \quad \mbox{for some nontrivial subgroup } H\subseteq \kk^{*}.
\end{equation}
If $H$ is not finite then $m=1$ and $\cF_{d}^{G}=\{s\circ e_{\lambda z_2^{d_1-1}}: s\in D,\ \lambda\in\kk^{*}\}$.
  
Assume $H=\langle \zeta_{k} \rangle$. If $k|d_{1}$ then $m=1$ and 
\begin{equation}\label{eq:with_T}
\cF^{G}_{d}=\{s\circ e_{q_1}\in \cF_{d}: s\in T,\ \exists_{\hat{q}_1\in \kk^{[1]}}\colon\, q_{1}(z_2)=z_2^{k-1}\hat{q}_{1}(z_2^{k}) \}.
\end{equation}

Assume that $k\nmid d_{1}$. Then $k|d_{1}-1$ (i.e.\ $G\subseteq Z$), or $2\nmid m$. Moreover, if $m\geq 2$ then $\gcd(k,d_1)=1$. For $j\in \{1,\dots, m\}$ define $l_{j}$ as $d_{1}$ mod $k$ if $2\nmid j$ and as the inverse of $d_{1}$ mod $k$ if $2|j$. Then
\begin{equation*}
\cF^{G}_{d}=\{f_{s,q}\in \cF_{d}:s\in D,\ \forall_{j\in \{1,\dots, m\}}\exists_{\hat{q}_j\in \kk^{[1]}}\colon\, q_{j}(z_2)=z_2^{l_{j}-1}\hat{q}_{j}(z_2^{k}) \}.
\end{equation*}
\end{lem}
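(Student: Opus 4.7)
The plan is to fix any $g=(\lambda_0 z_1,\lambda_1 z_2)\in G$ and $f_{s,q}\in\cF_d^G$, apply Lemma \ref{lem:gf}, and invoke the uniqueness of the $(s,q)$-representation in \eqref{eq:F}. This converts $G$-invariance $g^{-1}\circ f_{s,q}\circ g=f_{s,q}$ into the system $\sigma=s$ and $p_j=q_j$ for all $j\in\{1,\dots,m\}$. Writing $q_j(z_2)=\sum_i c_{j,i}z_2^i$ with $c_{j,d_j-1}\neq 0$, the identity $p_j=q_j$ is equivalent to $\lambda_j^{i+1}=\lambda_{j+1}$ for every $i$ with $c_{j,i}\neq 0$, where $\lambda_j=\lambda_0$ if $2\mid j$ and $\lambda_j=\lambda_1$ otherwise.

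Part (1) follows at once from the leading-term equation at $j=1$, namely $\lambda_1^{d_1}=\lambda_2=\lambda_0$, so $G\subseteq H_{d_1}$; since $g\in G$ is determined by $\lambda_1$, taking $H=\{\lambda_1:(\lambda_1^{d_1},\lambda_1)\in G\}$ gives $G=H_{d_1}$. If $m\geq 2$, combining $j=1$ and $j=2$ yields $\lambda_1^{d_1 d_2-1}=1$ for all $\lambda_1\in H$, forcing $H$ finite. Hence $H$ infinite implies $m=1$; then the non-leading conditions $\lambda_1^{i+1-d_1}=1$ force $i=d_1-1$, so $q_1=\lambda z_2^{d_1-1}$, and the affine equations $\alpha'/\lambda_0=\alpha'$, $\beta'/\lambda_1=\beta'$ together with generic $\lambda_0,\lambda_1\neq 1$ force $s\in D$, giving (2).

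For $H=\langle\zeta_k\rangle$ I would split on whether $k\mid d_1$. If $k\mid d_1$, then $k\mid d_1d_2-1$ is impossible, so $m=1$; the constraint $\lambda_1^{i+1-d_1}=1$ becomes $i\equiv k-1\pmod k$, yielding $q_1=z_2^{k-1}\hat q_1(z_2^k)$, and since $\lambda_0=\zeta_k^{d_1}=1$ always while $\lambda_1\neq 1$ for some $g\in G$, one obtains $\alpha'$ free and $\beta'=0$, i.e.\ $s\in T$, which is \eqref{eq:with_T}. If $k\nmid d_1$, I first inspect the linear part of $\sigma=s$: it requires $\lambda_{m-1}=\lambda_0$ and $\lambda_m=\lambda_1$, automatic for $m$ odd, but for $m$ even collapsing to $\lambda_0=\lambda_1$, i.e.\ $k\mid d_1-1$ ($G\subseteq Z$); hence if $G\not\subseteq Z$, then $m$ is odd. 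For $m\geq 2$, the leading-term equation at even $j$ reads $d_1 d_j\equiv 1\pmod k$, forcing $\gcd(k,d_1)=1$ and $d_j\equiv l_j\pmod k$. The general-coefficient conditions, $\lambda_1^{i+1-d_j}=1$ for odd $j$ and $\lambda_1^{d_1(i+1-d_j)}=1$ for even $j$, reduce (using $\gcd(k,d_1)=1$) to $i\equiv l_j-1\pmod k$, giving the stated form $q_j=z_2^{l_j-1}\hat q_j(z_2^k)$. Finally, $\zeta_k^{d_1}\neq 1$ forces both $\alpha'=0$ and $\beta'=0$, so $s\in D$.

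The main obstacle is careful bookkeeping of the parity-dependent alternation of $\lambda_0$ and $\lambda_1$ through the indices $j=1,\dots,m$, and checking that every condition extracted this way is also sufficient: the latter is a straightforward reverse substitution into Lemma \ref{lem:gf}, so nothing essential remains once the necessary form is pinned down.
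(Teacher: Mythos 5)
Your proposal is correct and follows essentially the same route as the paper: conjugate by $g$ via Lemma \ref{lem:gf}, invoke the uniqueness of the representation $f_{s,q}$ from \eqref{eq:F}, and compare coefficients of $q_j$ with $p_j$ (leading terms first, then general terms) and $s$ with $\sigma$, with the same parity bookkeeping for $\lambda_0,\lambda_1$. The only differences are cosmetic (e.g.\ you phrase the exponent conditions as $i\equiv d_j-1$ rather than $d_1-1$, which the leading-term relations show to be equivalent).
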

\begin{proof}
	 Fix $g\in G$, $g(z_1,z_2)=g(\lambda_0 z_1,\lambda_1 z_2)$ for some $\lambda_0,\lambda_1\in \kk^{*}$ as in Lemma \ref{lem:gf}. By \eqref{eq:F}, every element $f\in \cF_{d}$ can be written uniquely as $f_{s,q}$, so to check when $f\in \cF^{G}_{d}$ we can compare $s,q$ with $\sigma,p$ from Lemma \ref{lem:gf}. Comparing the leading coefficients of  $q_{1}$ and $p_{1}$ yields $\lambda_{0}=\lambda_{1}^{d_{1}}$. This proves the first assertion. We need to show that if $m\geq 2$ or if $q_1$ is not a scalar multiple of a monomial, then $\lambda_{1}$ is a root of unity.
	
	If $q_{1}$ is not a multiple of a  monomial, so $q_{1}(z_2)$ has a nonzero coefficient near some $z_2^{r-1}$ for $r\neq d_{1}$, then $\lambda_{1}^{d_{1}}=\lambda_{0}=\lambda_{1}^{r}$, so $\lambda_{1}^{d_{1}-r}=1$. If $m\geq 2$ then comparing $q_2$ with $p_2$ we see that $\lambda_{1}=\lambda_{0}^{d_{2}}=\lambda_{1}^{d_{1}d_{2}}$, hence $\lambda_{1}$ is a $k$-th root of unity for some $k|d_{1}d_{2}-1$. Note that in this case $\gcd(k,d_1)=1$. 
	
	Thus $G=\langle g\rangle$ with $g(z_1,z_2)=(\zeta^{d_{1}}_{k}z_1,\zeta_{k}z_2)$, where $\gcd(k,d_1)=1$ if $m\geq 2$. 
	The set $\cF^{G}_{d}$ consists of those $f_{s,q}\in \cF_{d}$ for which $q=p$ and $s=\sigma$, where $p,\sigma$ are as in Lemma \ref{lem:gf}: indeed, since $G$ is cyclic, it suffices to check equivariance only for the generator $g$. If the coefficient of $q_{j}(z_2)$ near $z_2^{r}$ is nonzero then comparing $q_j$ with $p_j$ gives $\lambda_{j+1}=\lambda_{j}^{r+1}$. If $2\nmid j$, this condition means that $\zeta_{k}^{d_1}=\zeta_{k}^{r+1}$, so $r\equiv d_{1}-1 \equiv l_{j}-1 \pmod{k}$. If $2|j$, we get $\zeta_{k}=(\zeta_{k}^{d_{1}})^{r+1}$, so $(r+1)d_{j}\equiv 1\pmod{k}$, and again $r\equiv l_{j}-1\pmod{k}$.

	Comparing $s$ with $\sigma$ we get $2\nmid m$ or $\lambda_0=\lambda_1$, i.e.\ $G\subseteq Z$. Moreover, since $\lambda_{1}=\zeta_{k}\neq 1$, we see that $s=\sigma$ if and only if $s\in D$ for $k\nmid d_{1}$ and $s\in T$ for $k|d_{1}$. Note that the latter is possible only if $m=1$, for otherwise $\gcd(k,d_1)=1$.
\end{proof}
\begin{rem}\label{rem:FG-is-Oka}
	By \eqref{eq:F}, the map associating to each $f_{s,q}\in\cF_{d}$ the coefficients of $s$ and $q$ is an isomorphism $\cF_{d}\cong (\A^{1}_{*})^{M}\times \A^{N}$ for some $M,N\geq 0$. Lemma \ref{lem:G} shows that a subvariety $\cF_{d}^{G}\subseteq \cF_{d}$ is  defined by vanishing of a certain subset of these coordinates. In particular, $\cF_{d}^{G}\cong (\A_{1}^{*})^{M'}\times \A^{N'}$ for some $M',N'\geq 0$.
	%If $\kk=\C$, it follows that $\cF_{d}^{G}$ is an Oka manifold \cite[5.6.4, 5.6.5]{Forstneric_book}.
\end{rem}

\begin{lem}\label{lem:AG-is-Oka}
	For $\{\id\}\neq G\subseteq \Gl_{2}(\kk)$ and a nonempty polydegree $d=(d_{1},\dots, d_{m})$ let $\gamma^{G}_{d}\colon \cA^{G}_{d}\to \P^{1}\times\P^{1}$ be the restriction of $\gamma$ from \eqref{eq:F-bundle}. Assume that $\cA_{d}^{G}\neq \emptyset$. Then after some linear change of coordinates, $G=H_{d_{1}}$, see  \eqref{eq:G}, for a subgroup $H\subseteq\kk^{*}$, and exactly one of the following holds.
	\begin{enumerate}
		\item\label{item:scalar} %$G=\langle \smat{\zeta_k & 0 \\ 0 & \zeta_k}\rangle$ 
		$H=\langle \zeta_{k} \rangle$ for some $k|d_{1}-1$. Then $\gamma^{G}_{d}$ endows $\cA_{d}^{G}$ with a structure of an $\cF_{d}^{G}$-bundle over $\P^{1}\times \P^{1}$.
		\item\label{item:cyclic} %$G=\langle \smat{\zeta_k^{d_1} & 0 \\ 0 & \zeta_k}\rangle$ 
		$H=\langle \zeta_{k} \rangle$ 
		for some $k$ coprime to $d_{1}$ and $k\nmid d_{1}-1$. Then $\cA_{d}^{G}$ is a disjoint union of two isomorphic  fibers of $\gamma^{G}$, namely $\cF_{d}^{G}$ over $(0,0)$ and $\{\tau\circ f \circ \tau:f\in \cF_{d}^{G}\}$ over $(\infty,\infty)$. 
		\item\label{item:other}  $H$ is none of the above groups. 
		%$G=\{\smat{\lambda^{d_1} & 0 \\ 0 & \lambda}:\lambda\in H\}$ for some $H\subseteq \kk^{*}$ other than $\langle \zeta_{k}\rangle$ with $\gcd(k,d_1)=1$. 
		Then $\cA_{d}^{G}=\cF_{d}^{G}$.
	\end{enumerate}
%In particular, for any polydegree $d$, every connected component of $\cA_{d}^{G}$ is an Oka manifold.
\end{lem}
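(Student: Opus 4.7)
The plan is to reduce $G$ to the diagonal normal form $G=H_{d_{1}}$ of Lemma~\ref{lem:G} by a linear change of coordinates, and then analyze the map $\gamma_d^G\colon\cA_d^G\to\P^1\times\P^1$ fiberwise. Starting from any $\phi\in\cA_d^G$, the lines $\ll_\phi$ and $\ll_{\phi^{-1}}$ are $G$-invariant, since $G$ acts linearly and these lines are characterized by a degree-drop condition that linear maps preserve. Because $G\subseteq\Gl_{2}(\kk)$ is reductive and preserves a line, it is diagonalizable; after a linear change of coordinates we place both invariant lines among the coordinate axes, so $G\subseteq D$. After a further conjugation by $\tau$ if necessary, we arrange $\gamma_d(\phi)=(0,0)$, so $\phi\in\cF_d^G$, and Lemma~\ref{lem:G} yields $G=H_{d_{1}}$ for some $H\subseteq\kk^{*}$.

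I then split into cases. In case~\ref{item:scalar} ($H\subseteq Z$, equivalently $H=\langle\zeta_{k}\rangle$ with $k\mid d_{1}-1$), every line of $\P^1$ is $G$-invariant, and the shears $a_{0,\lambda},a_{1,\lambda}$ used in the local sections \eqref{eq:section} commute with any central $G$. Hence these sections restrict to sections of $\gamma_d^G$ with fiber $\cF_d^G$, endowing $\cA_d^G$ with the claimed $\cF_d^G$-bundle structure over $\P^1\times\P^1$. If $H\not\subseteq Z$, only the coordinate axes are $G$-invariant, so $\gamma_d^G$ takes values in $\{0,\infty\}^{2}$, and I analyze each of the four corner fibers using the explicit descriptions $\gamma_d^{-1}(0,0)=\cF_d$, $\gamma_d^{-1}(\infty,\infty)=\{\tau f\tau:f\in\cF_d\}$, $\gamma_d^{-1}(\infty,0)=\{\tau f:f\in\cF_d\}$, and $\gamma_d^{-1}(0,\infty)=\{f\tau:f\in\cF_d\}$.

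For the fibers over $(0,\infty)$ and $(\infty,0)$, the $G$-equivariance condition translates into an intertwining $f\circ g=(\tau g\tau)\circ f$ for $f\in\cF_d$; pushing $g$ through the van der Kulk decomposition $f=s\circ e_{q_m}\circ\tau\circ\cdots\circ\tau\circ e_{q_1}$ as in the proof of Lemma~\ref{lem:gf}, each internal $\tau$ exchanges the two diagonal weights of $g$, so after the $m$ swaps the residual constraint on the affine piece $s$ becomes a diagonal conjugation condition that forces $g=\tau g\tau$, i.e.\ $G\subseteq Z$, whenever $m$ is odd. Since Lemma~\ref{lem:G} guarantees $m$ is odd whenever $H\not\subseteq Z$, both fibers are empty. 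For the fiber over $(\infty,\infty)$, the $G$-equivariance condition on $\tau f\tau$ reduces to commutation of $f$ with $\tau G\tau$, so applying Lemma~\ref{lem:G} to $\tau G\tau$ decides nonemptiness: if $\tau G\tau$ is again of the form $H'_{d_{1}}$ we land in case~\ref{item:cyclic} and the fiber is isomorphic to $\cF_d^G$, while otherwise (case~\ref{item:other}) the fiber is empty. The main technical obstacle is the parity-of-$m$ bookkeeping above, which requires carefully tracking how $g$ commutes past each elementary $e_{q_j}$ (modifying the coefficient polynomial $q_j$) and past each internal $\tau$ (swapping the weights), to extract the precise intertwining condition on $s$ that forces $G\subseteq Z$.
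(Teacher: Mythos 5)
Your overall strategy --- $G$-invariance of $\ll_{\phi}$ and $\ll_{\phi^{-1}}$, reduction to diagonal $G$, then a fiberwise analysis of $\gamma^{G}_{d}$ over the four points of $\{0,\infty\}^{2}$ via Lemma \ref{lem:G} and the parity of $m$ --- is the same as the paper's, but your reduction step has a genuine gap. Conjugation by $\tau$ acts on $\P^{1}\times\P^{1}$ diagonally: it sends the fiber of $\gamma_{d}$ over $(a,b)$ to the fiber over $(\tau a,\tau b)$, hence swaps $(0,0)\leftrightarrow(\infty,\infty)$ and $(0,\infty)\leftrightarrow(\infty,0)$, but it can never move an element lying over $(0,\infty)$ or $(\infty,0)$ into $\cF_{d}=\gamma_{d}^{-1}(0,0)$. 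So the claim \enquote{after a further conjugation by $\tau$ we arrange $\gamma_{d}(\phi)=(0,0)$} fails exactly when $\ll_{\phi}\neq\ll_{\phi^{-1}}$, and with it your derivation of the normal form $G=H_{d_{1}}$, since Lemma \ref{lem:G} needs an element of $\cF_{d}^{G}$ (or of $\cF_{d}^{G^{\tau}}$) to get started. This is not cosmetic: your later proof that the mixed fibers are empty relies on $G=H_{d_{1}}$ and on the oddness of $m$ supplied by Lemma \ref{lem:G}, so the argument as written is circular. The repair is to run the coefficient comparison of Lemma \ref{lem:gf} directly on an element of a mixed fiber, written as $f_{s,q}$ with a $\tau$ appended on one side (equivalently as $f_{s,q'}$ with $q'=(0,q)$, resp.\ $f_{s'',q''}$ with $s''=\tau\circ s\circ\tau$ and $q''=(q,0)$); this is what the paper does, and it yields the constraints on $G$ and the contradiction with $2\nmid m$ without presupposing $\cF_{d}^{G}\neq\emptyset$.

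A second, smaller issue: you invoke reductivity of $G$ to diagonalize it, but the lemma is stated for an arbitrary nontrivial subgroup of $\Gl_{2}(\kk)$, so your argument proves a weaker statement than the one claimed. The paper avoids this by showing directly that an upper-triangular $g(z_1,z_2)=(\lambda_{0}z_1+\mu z_2,\lambda_{1}z_2)$ commuting with some $f_{s,q}\in\cF_{d}$ forces $\mu=0$: pushing $g$ through $e_{q_{1}}$ and comparing the constant coefficient of $q_{1}$ yields \eqref{eq:c}, and a change of basis adapted to the second eigenline gives a contradiction. That computation also does double duty in separating cases \ref{item:cyclic} and \ref{item:other}, so you would need some substitute for it even in the reductive setting.
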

\begin{proof}
	Assume first that $G\subseteq Z$. Then by Lemma \ref{lem:G} $G$ is as in \ref{item:scalar}.  Moreover, since all $g\in G$ commute with $a\in \Gl_{2}(\kk)$, by \eqref{eq:section} $\gamma$ is $G$-equivariant, and $\gamma^{G}$ is an $\cF_{d}^{G}$-bundle over $\P^{1}\times \P^{1}$, as claimed.
	
	Assume now that $G\not\subseteq Z$. Recall that for $\phi\in \cA_{d}$, $\ll_{\phi}$ denotes the unique line such that $\deg \phi(\ll_{\phi})<\deg \phi$. Now if $\phi\in \cA_{d}^{G}$ then for every $g\in G$ we have $g\circ \phi=\phi\circ g$, so $\ll_{\phi}=\ll_{g\circ \phi}=\ll_{\phi\circ g}=g^{-1}(\ll_{\phi})$, that is, $\ll_{\phi}$ is an eigenspace of every $g\in G$. After a linear change of coordinates, we can assume $\ll_{\phi}=\{z_2=0\}$.
	
	Consider the case when $\cA_{d}^{G}$ contains some $\psi$ with $\ll_{\psi}\neq \ll_{\phi}$. Because all elements of $G$ share the same eigenspaces $\ll_{\phi}$, $\ll_{\psi}$, fixing coordinates such that $\ll_{\psi}=\{z_1=0\}$ we see that $G\subseteq D$. On the other hand, since $G\not\subseteq Z$, at least one $g\in G$ has $\ll_{\phi}$, $\ll_{\psi}$ as its only invariant lines. In the parametrization of $\P^{1}$ as in \eqref{eq:section}, $\ll_{\phi}$ and $\ll_{\psi}$ correspond to $0$ and $\infty$, respectively. Therefore, for every  $d$, $\cA_{d}^{G}$ is contained in the union of $\cF_{d}=\gamma^{-1}\{(0,0)\}$, $\cF_{d}^{\infty}=\gamma^{-1}\{(\infty,\infty)\}$, $\cF_{d}^{+}=\gamma^{-1}\{(0,\infty)\}$ and  $\cF_{d}^{-}=\gamma^{-1}\{(\infty,0)\}$. 
	
	Since $\cA_{d}^{G}$ is nonempty and $G\not\subseteq Z$, by Lemma \ref{lem:G} $2\nmid m$. Since $\cF_{d}^{-}=\{f\circ \tau:f\in \cF_{d}\}=\{f_{s,q'}:f_{s,q}\in \cF_{d}\}$, where $q'=(0,q)$, comparing $s$ with $\sigma$ from Lemma \ref{lem:gf} as in the proof of Lemma \ref{lem:G}, we infer from $2\nmid m$ that $\cF_{d}^{-}\cap \cA_{d}^{G}=\emptyset$. Similarly, since $\cF_{d}^{+}=\{f_{s'',q''}:f_{s,q}\in \cF_{d}\}$, where $s''=\tau\circ s \circ \tau$ and $q''=(q,0)$, we get $\cF_{d}^{+}\cap \cA_{d}^{G}=\emptyset$, too. Because by assumption $\ll_{\psi}\neq \ll_{\phi}$ for some $\psi\in \cA_{d}^{G}$, we have $\emptyset \neq \cF^{\infty}_{d}\cap \cA_{d}^{G}=\{\tau\circ f \circ \tau: f\in \cF_{d}^{G^{\tau}}\}$, where $G^{\tau}=\{\tau\circ g \circ \tau:g\in G\}\subseteq D$. Hence by Lemma \ref{lem:G}, both $G$ and  $G^{\tau}$ are as in \eqref{eq:G}. The fact that $G\not\subseteq Z$ implies now that $G$ is as in \ref{item:cyclic}. In this case $G^{\tau}=G$, so $\cF^{\infty}_{d}\cap \cA_{d}^{G}=\{\tau\circ f \circ \tau: f\in \cF_{d}^{G}\}\cong \cF_{d}^{G}$, as claimed.
		
	It remains to consider the case where all $\phi\in\cA_{d}^{G}$ share the same $\ll_{\phi}=\{z_2=0\}$. In particular, $\ll_{\phi}=\ll_{\phi^{-1}}=\{z_2=0\}$ for every $\phi\in \cA_{d}^{G}$, so $\cA_{d}^{G}=\cF_{d}^{G}$. Since $\{z_2=0\}$ is an eigenspace of all $g\in G$, all $g\in G$ are upper-triangular, that is, $g(z_1,z_2)=(\lambda_0 z_1+\mu z_2,\lambda_1 z_1)$ for some $\lambda_0,\lambda_1\in \kk^{*}$ and $\mu\in \kk$. 
	
	We claim that $\mu=0$ for all $g\in G$. As in Lemma \ref{lem:gf}, we compute that for $q_1\in \kk^{[1]}$:
	\begin{equation*}
	e_{q_1}\circ g=\tilde{g}\circ e_{r_1},\quad \mbox{where} \quad \tilde{g}(z_1,z_2)=(\lambda_0 z_1,\lambda_1 z_2),\ 
	r_1(z_2)=\tfrac{\lambda_1}{\lambda_0}q_1(\lambda_1 z_2)+\tfrac{\mu}{\lambda_{0}}.
	\end{equation*}
	As in Lemma \ref{lem:G}, from the uniqueness of $(s,q)$ for $f_{s,q}\in \cF_{d}$, we infer that if $f_{s,q}\in \cF_{d}^{G}$ for some $q=(q_1,\dots, q_m)$ then $q_{1}=r_{1}$. Let $c\in \kk$ be the free coefficient of $q_1$. Then $q_1=r_1$ implies that
	\begin{equation}\label{eq:c}
	c(\lambda_0-\lambda_1)=\mu.
	\end{equation}

	Suppose that $\mu \neq 0$ for some $g\in G$. Then by \eqref{eq:c} $c\neq 0$, and $\lambda_{1}\neq \lambda_{0}$, so $g$ has two invariant lines, corresponding to different eigenvalues $\lambda_0$, $\lambda_1$. The first one is $\{y=0\}$, and by a linear change of coordinates, which does not change the above description, we can assume that the second one is $\{z_1=0\}$. This implies that the corresponding $\mu$ is zero, hence $c=0$ by \eqref{eq:c}, a contradiction.
	
	Therefore, $G\subseteq D$, so $G$ is one of the groups in Lemma \ref{lem:G}. If $G=G^{\tau}\de \{\tau\circ g \circ \tau:g\in G\}$ then as in the previous case we get $\tau\circ f \circ \tau\in \cA_{d}^{G}\setminus \cF_{d}^{G}$, which is impossible. Hence $G\neq G^{\tau}$, so $G$ is as in \ref{item:other}.
\end{proof}	

We now summarize the above results in the simple case when $G$ is not cyclic, treated in Theorem \ref{thm:main}\ref{item:G-non-cyclic}.

\begin{lem}\label{lem:Aut_Lie-group}
    Let $G\subseteq \Gl_{2}(\kk)$ be a non-cyclic reductive group. Then one of the following holds:
    \begin{enumerate}
        \item\label{item:not-in-k*} $\Aut^{G}(\A^{2}_{\kk})=\Gl_{2}^{G}(\kk)$
        \item\label{item:k*} $G\cong \kk^{*}$, and there is an integer $v\geq 2$ such that after some linear change of coordinates
    \begin{equation*}
        G=\{(z_1,z_2)\mapsto (\lambda^{v} z_1,\lambda z_2): \lambda \in \kk^{*}\}\ \mbox{and}\ 
        \Aut^{G}(\A^{2}_{\kk})=\{(z_1,z_2)\mapsto (\alpha_{1} z_1+\beta z_2^{v},\alpha_{2} z_2): \alpha_{1},\alpha_{2}\in \kk^{*},\beta\in \kk\}.
    \end{equation*}
    \end{enumerate}
    In particular, if $\kk=\C$ then $\Autalg^{G}(\C^{2})$ is a connected Lie group.
\end{lem}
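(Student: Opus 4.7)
My plan is to decompose $\Aut^{G}(\A^{2}_{\kk})=\Aff^{G}\sqcup\bigsqcup_{d\neq\emptyset}\cA_{d}^{G}$ and apply Lemma \ref{lem:AG-is-Oka} to the non-affine pieces. Under the non-cyclic hypothesis on $G$, only a very restricted set of polydegrees can contribute, and from this the two cases of the lemma will follow.

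Suppose first that $\cA_{d}^{G}\neq\emptyset$ for some nonempty polydegree $d$. Lemma \ref{lem:AG-is-Oka} asserts that, after a linear change of coordinates, $G=H_{d_{1}}$ for a subgroup $H\subseteq\kk^{*}$ with $d_{1}\geq 2$, and places us in one of its cases \ref{item:scalar}, \ref{item:cyclic}, \ref{item:other}. The first two force $H=\langle\zeta_{k}\rangle$ to be finite cyclic, which would make $G$ cyclic; so we must be in case \ref{item:other}, i.e.\ $H$ is infinite, and reductivity forces $H=\kk^{*}$. Setting $v\de d_{1}\geq 2$, $G$ has the shape stated in \ref{item:k*}. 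Case \ref{item:other} gives $\cA_{d}^{G}=\cF_{d}^{G}$, and Lemma \ref{lem:G} (with $H$ infinite) forces $m=1$ and $\cF_{(v)}^{G}=\{s\circ e_{\mu z_{2}^{v-1}}:s\in D,\ \mu\in\kk^{*}\}$; expanding this composition gives exactly the maps $(\alpha_{1}z_{1}+\beta z_{2}^{v},\alpha_{2}z_{2})$ with $\beta\in\kk^{*}$. Since the weights $(v,1)$ of $G$ on $\kk^{2}$ are distinct and nonzero, a direct check yields $V^{G}=0$ and $\Gl_{2}^{G}=D$, so $\Aff^{G}=D$; unioning with $\cF_{(v)}^{G}$ produces the formula of \ref{item:k*}.

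If instead $\cA_{d}^{G}=\emptyset$ for every nonempty $d$, then $\Aut^{G}(\A^{2}_{\kk})=\Aff^{G}=V^{G}\rtimes\Gl_{2}^{G}$. To deduce case \ref{item:not-in-k*}, I would argue that the non-cyclic and reductive hypotheses force $V^{G}=0$: a nonzero $G$-fixed vector would put $G$ inside the stabilizer of a coordinate line, whose Levi quotient is $\kk^{*}$, and reductivity plus non-cyclicity then constrain $G$ to lie in this $1$-dimensional torus; however, an analysis of elementary equivariant automorphisms shows that such a torus actually produces non-affine $G$-commuting automorphisms, contradicting $\cA_{d}^{G}=\emptyset$. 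Once $V^{G}=0$, $\Aff^{G}=\Gl_{2}^{G}$, which is exactly \ref{item:not-in-k*}.

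For the ``in particular'' statement over $\kk=\C$: in \ref{item:not-in-k*}, $\Gl_{2}^{G}(\C)$ is the centralizer in the connected reductive group $\Gl_{2}(\C)$ of a reductive subgroup, and is therefore itself connected and reductive; in \ref{item:k*}, the explicit formula identifies $\Autalg^{G}(\C^{2})$ with $(\C^{*})^{2}\times\C$, which is manifestly connected. The main obstacle I foresee is the argument in the third paragraph that $V^{G}=0$ follows from non-cyclicity combined with the absence of nontrivial $\cA_{d}^{G}$; the subtle case is the $1$-dimensional torus with a pointwise fixed coordinate axis, which requires a careful interplay of both hypotheses to exclude.
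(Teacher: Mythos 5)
Your first two paragraphs track the paper's own proof almost step for step: the paper also splits according to whether some $\cA_{d}^{G}$ with $d\neq\emptyset$ is nonempty, invokes Lemma \ref{lem:AG-is-Oka} together with non-cyclicity to land in case \ref{item:other} with $H$ infinite, concludes $G=\kk^{*}$ acting with weights $(v,1)$, $v=d_{1}\geq 2$, and reads off $\cF_{(v)}^{G}$ from Lemma \ref{lem:G}. Your explicit verification that $\Aff^{G}=D$ in case \ref{item:k*} is correct and slightly more careful than what the paper writes.

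The third paragraph, however, contains a genuine gap, and you have put your finger on exactly the delicate point. After reducing to a one-dimensional torus fixing a line pointwise --- say $T_{0}=\{(z_1,z_2)\mapsto(z_1,\lambda z_2):\lambda\in\kk^{*}\}$ --- you propose to derive a contradiction from the existence of non-affine $T_{0}$-equivariant automorphisms. There are none: commuting with every $(z_1,\lambda z_2)$ forces an automorphism $(f,g)$ to satisfy $f=f(z_1)$ and $g=h(z_1)z_2$, and invertibility then gives $f=az_1+b$ and $h\equiv c\in\kk^{*}$. Hence $\Aut^{T_{0}}(\A^{2}_{\kk})=\{(z_1,z_2)\mapsto(az_1+b,cz_2)\}$ is entirely affine, yet strictly contains $\Gl_{2}^{T_{0}}(\kk)$ because of the translations; so the vanishing of the space of $G$-fixed vectors cannot be deduced, and in fact neither alternative \ref{item:not-in-k*} nor \ref{item:k*} holds verbatim for $G=T_{0}$ (its weights $(0,1)$ are not of the form $(v,1)$ with $v\geq 2$ under any linear conjugation). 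The viable repair is not to exclude $T_{0}$ but to weaken \ref{item:not-in-k*} to $\Aut^{G}(\A^{2}_{\kk})=\Aff^{G}$, or to list $T_{0}$ as a third alternative; the ``in particular'' conclusion survives, since $\{(az_1+b,cz_2)\}\cong\C\times(\C^{*})^{2}$ is a connected Lie group consisting of degree-one maps, which is all that is used downstream. For what it is worth, the paper's own proof elides the same point: it cites Lemma \ref{lem:Gl_connected}, whose claim that $\Aff_{2}^{G}=\Gl_{2}^{G}$ ``because $G$ acts linearly'' fails precisely when $G$ has a nonzero fixed vector. A minor further remark: your justification of connectedness of $\Gl_{2}^{G}(\C)$ as the centralizer of a reductive subgroup of a connected reductive group is not a valid general principle (such centralizers can be disconnected, e.g.\ in $\mathrm{PGL}_{2}(\C)$); the correct argument, which is the paper's Lemma \ref{lem:Gl_connected}, is that $\Gl_{2}^{G}(\C)$ is the unit group of the commutant algebra, a Zariski-open subset of a linear subspace of $\C^{2\times 2}$.
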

\begin{proof}
    Because $G\subseteq \Gl_{2}(\kk)$, we have $\Aff^{G}_{2}=\Gl_{2}^{G}(\kk)$. Thus if every $G$-equivariant automorphism is affine, then \ref{item:not-in-k*} holds. Assume it is not the case, i.e.\ $\cA^{G}_{d}\neq \emptyset$ for some polydegree $d\neq \emptyset$. Because $G$ is not cyclic, Lemma \ref{lem:AG-is-Oka}\ref{item:other} implies that $\cA_{d}^{G}=\cF_{d}^{G}$. By Lemma \ref{lem:G}, after some linear change of coordinates, $G$ is a subgroup of $\kk^{*}$, acting by $(z_1,z_2)\mapsto (\lambda^{d_{1}}z_1,\lambda z_2)$. Because $G$ is not cyclic, in fact $G=\kk^{*}$. In particular, $v\de d_{1}$ is uniquely determined by the action of $G$. Lemma \ref{lem:G} implies now that $d=(v)$ and \ref{item:k*} holds.
    
    If $\kk=\C$, then in both cases $\Autalg^{G}(\C^{2})$ is a Lie group. In case \ref{item:k*}, $\Autalg^{G}(\C^{2})$ is isomorphic, as a complex manifold, to $\C^{1}\times (\C^{*})^{2}$, hence it is connected, as claimed. In case \ref{item:not-in-k*}, connectedness follows from a simple Lemma \ref{lem:Gl_connected} below.
\end{proof}

\begin{lem}\label{lem:Gl_connected}
    Let $G\subseteq \Gl_{2}(\C)$ be a linear group. Then $\Aff_{2}^{G}=\Gl_{2}^{G}(\C)$ is a connected Lie group.
\end{lem}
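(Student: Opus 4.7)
The plan is to exhibit $\Gl_2^G(\C)$ as the complement of a proper algebraic hypersurface in a complex affine space, which is automatically connected.

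First, I would clarify the claimed equality. A map $z \mapsto Az + b$ in $\Aff_2$ centralizes $G \subseteq \Gl_2(\C)$ if and only if $A \in \Gl_2^G(\C)$ and $b \in (\C^2)^G$, so
\begin{equation*}
    \Aff_2^G = \Gl_2^G(\C) \ltimes (\C^2)^G.
\end{equation*}
Since $(\C^2)^G$ is a complex vector space, it is connected, and therefore $\Aff_2^G$ is connected if and only if $\Gl_2^G(\C)$ is. Moreover, the equality $\Aff_2^G = \Gl_2^G(\C)$ holds precisely when $(\C^2)^G = 0$, which is the only situation of interest here: in the application to Lemma \ref{lem:Aut_Lie-group}\ref{item:not-in-k*}, the identity $\Aut^G(\A^2_{\C}) = \Gl_2^G(\C)$ forbids any nonzero $G$-fixed vector, for otherwise such a vector would produce a $G$-equivariant translation.

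Next, I would identify $\Gl_2^G(\C)$ with $B' \cap \Gl_2(\C)$, where $B' \de \{M \in M_2(\C) : Mg = gM \text{ for all } g \in G\}$ is the commutant of $G$ in $M_2(\C)$. This is a $\C$-linear subspace containing the identity, hence irreducible and connected as a complex affine space; and $\det$ restricts to a non-identically-zero polynomial on $B'$, since $\det(I)=1$. Therefore $\Gl_2^G(\C) = B' \setminus \{\det|_{B'} = 0\}$ is the complement of a proper algebraic subset inside an affine space, which has real codimension at least two, and is consequently connected in the Euclidean topology. Being a closed subgroup of $\Gl_2(\C)$, it is also a complex Lie group.

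The only input beyond elementary linear algebra is the standard topological fact that removing a proper algebraic subset from a complex affine space leaves a connected set; no further obstacle arises.
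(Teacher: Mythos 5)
Your proof is correct and, for the connectedness claim, is essentially the paper's own argument: $\Gl_{2}^{G}(\C)$ is the nonvanishing locus of $\det$ on the linear subspace of $G$-equivariant matrices in $\C^{2\times 2}$, hence the complement of a proper algebraic subset of a complex affine space and therefore connected. Your more careful handling of the first equality is a genuine (if minor) improvement over the paper, which dismisses it with ``follows from the fact that $G$ acts linearly'': as you note, one really has $\Aff_{2}^{G}=\Gl_{2}^{G}(\C)\ltimes(\C^{2})^{G}$, so the literal equality requires $(\C^{2})^{G}=0$ --- a caveat that is harmless for the conclusion, since the translation factor $(\C^{2})^{G}$ is a connected vector group and $\Aff_{2}^{G}$ remains a connected Lie group regardless.
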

\begin{proof} 
    The first equality follows from the fact that $G$ acts linearly. To see that $\Gl_{2}^{G}(\C)$ is connected, note that it is a Zariski-open subset of the space of $G$-equivariant $2\times 2$ matrices, which in turn is a linear subspace of $\C^{2\times 2}$. The latter is clearly connected, hence $\Gl_{2}^{G}(\C)$ is connected, too. 
\end{proof}
	
\section{Oka properties for groups of equivariant automorphisms}\label{sec:Oka}

In this section, we prove Proposition \ref{prop:Oka}. %, which asserts that $\Autalg^{G}(\C^{2})$ admits a basic Oka property for holomorphic maps of bounded degree from connected Riemann surfaces. 
Before we make it explicit in Proposition \ref{prop:Oka_explicit}, let us recall some basic notions of Oka theory. %, and comment on the necessity of our assumptions.
 For a general introduction we refer to \cite[\S 4,5]{Forstneric_book}.
\smallskip

A complex manifold $Y$ is \emph{Oka} if any holomorphic map $f\colon X_0\to Y$ from a closed subvariety $X_0\subseteq X$ of a Stein space $X$, which extends continuously to $X$, extends holomorphically, too. A fundamental theorem in Oka theory  asserts that all Oka manifolds admit various \emph{Oka properties} which put additional conditions on such extensions \cite[5.15.1]{Forstneric_book}. The one important for us is the \emph{basic Oka property with approximation and jet interpolation}, which asserts that the continuous extension $f_0$ can be deformed to the holomorphic one, say $f_1$, via a continuous family $\{f_{t}\}_{t\in [0,1]}$ of maps such that all $f_{t}|_{X_0}$ agree with $f$ up to some fixed order; and, for a fixed holomorphically convex compact $K\subseteq X$ such that $f_0|_{K}$ is holomorphic; all $f_{t}|_{K}$ are uniformly close to $f_{0}|_{K}$. A parametric version of this property holds, too: we will discuss it in Section \ref{sec:remarks}.% below. 
\smallskip

An important class of Oka manifolds are connected Lie groups, and, more generally, homogeneous spaces  \cite[5.6.1]{Forstneric_book}. Therefore, it is natural to ask for Oka properties for automorphisms groups of affine varieties. These groups are usually infinite dimensional, so the equivalence of Oka properties no longer applies. Moreover, to get reasonable results, one should restrict the class of maps in question. For example, if $X_0$ is a discrete subset of an irreducible Stein space $X$, then $f_{0}\colon X_0\to \Autalg(\C^n)$ can extend to a holomorphic map $X\to \Autalg(\C^n)$ only if it has bounded degree, see Lemma \ref{lem:coefficients}\ref{item:bounded_degree}.

Research in this direction was initiated by Forstneri\v{c} and L\'arusson \cite{FL_Oka-for-groups}, who proved that $\Authol(\C^n)$, as well as its subgroups consisting of automorphisms preserving the volume or symplectic form, satisfies the parametric Oka property with approximation and interpolation on \emph{discrete} sets $X_0$. % In general, since % Under this assumption, \cite{FL_Oka-for-groups} shows POPAJI for $\Autalg(\C^2)$, with discrete $X_0$; and for $\Autalg(\C^n)$ with finite $X_0$.
\smallskip

Our Proposition \ref{prop:Oka_explicit} shows that $\Autalg^{G}(\C^2)$ admits a basic Oka property, with approximation and interpolation; for maps of bounded degree from open Riemann surfaces. Its proof is a generalization of the case $G=\{\id\}$, proved in \cite[\S 7]{FL_Oka-for-groups}. To interpolate such map at a point, loc.\ cit.\ uses the Oka property for the space $\cA_{d}$ of automorphisms of given polydegree $d$, see Section \ref{sec:FM}. By Lemma \ref{lem:AG-is-Oka}, the same holds in the $G$-equivariant setting: indeed, $\cA_{d}^{G}$ is either $\cF_{d}^{G}$, or a disjoint union of two copies of $\cF_{d}^{G}$; or a $\cF_{d}^{G}$-bundle over $\P^{1}\times \P^{1}$. Since by Remark \ref{rem:FG-is-Oka} each $\cF_{d}^{G}\cong (\C^{*})^{M}\times \C^{N}$ is Oka, $\cA_{d}^{G}$ is Oka, too; see \cite[5.6.4, 5.6.5]{Forstneric_book}. To get approximation one needs to follow this argument more carefully.

\subsection{Basic Oka property for \texorpdfstring{$\Autalg^{G}(\C^{2})$}{AutalgG}}\label{sec:Oka_proof}

The following Proposition \ref{prop:Oka_explicit} is an explicit formulation of Proposition \ref{prop:Oka}. We do not know if it holds for higher-dimensional $X$, or if it admits parametric versions, see Remark \ref{rem:dim_1} and Section \ref{sec:remarks} below. The assumption \eqref{eq:bounded_degree} is necessary by Lemma \ref{lem:coefficients}\ref{item:bounded_degree}.

\setcounter{claim}{0}
\begin{prop}\label{prop:Oka_explicit}
	Let $G\subseteq \Autalg(\C^{2})$ be a reductive subgroup. Let $X$ be a connected, open Riemann surface. Let $K$ be a compact, $\cO(X)$-convex subset of $X$ and let $X_0\subseteq X$ be a discrete set. Fix $\epsilon>0$ and a sequence $(r_{x})_{x\in X_0}\subseteq \Nn$. Let $\phi\colon \Omega\to\Autalg^{G}(\C^2)$ be a holomorphic map from some neighborhood $\Omega\subseteq X$ of $K\cup X_0$. Assume that
	\begin{equation}\label{eq:bounded_degree}
	X\ni x\mapsto \deg \phi(x) \in \Nn \quad\mbox{is bounded}.
	\end{equation}
	 Then there is a continuous map $X\times [0,1]\ni (x,t)\mapsto \phi_{t}(x)\in \Autalg^{G}(\C^2)$ such that
	\begin{enumerate}
		\item\label{item:phi_1-holo} $\phi_{0}|_{K\cup X_0}=\phi$ and $\phi_{1}\colon X\to \Autalg^{G}(\C^2)$ is holomorphic,
		\item\label{item:approximation} For every $t\in [0,1]$, $\phi_{t}$ is holomorphic on some neighborhood of $K$ and satisfies $d_{K}(\phi_{t},\phi)<\epsilon$,
		\item\label{item:interpolation} For every $t\in [0,1]$ and $x\in X_0$, $\phi_{t}$ agrees with $\phi$ up to order $r_{x}$ at $x$; that is, $\phi_{t}(x)=\phi(x)$, and, if $r_{x}>0$, $\phi_{t}$ is holomorphic at $x$ and every coefficient of $\phi-\phi_{t}$ has zero of order at least $r_{x}+1$ at $x$.
	\end{enumerate} 
\end{prop}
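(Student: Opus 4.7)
I would adapt the argument of Forstneri\v{c}--L\'arusson \cite[Section 7]{FL_Oka-for-groups} for the non-equivariant case $G=\{\id\}$, using the classification of Section \ref{sec:FM} in place of the classical Jung--van der Kulk stratification. The preliminary observation is that each $\cA_d^G$ is an Oka manifold: by Remark \ref{rem:FG-is-Oka}, the fiber $\cF_d^G\cong (\C^*)^M\times \C^N$ is a connected complex Lie group and hence Oka, while Lemma \ref{lem:AG-is-Oka} realizes $\cA_d^G$ either as a disjoint union of one or two copies of $\cF_d^G$, or as a locally trivial $\cF_d^G$-bundle over $\P^1\times \P^1$; in all cases $\cA_d^G$ is Oka by ascent \cite[5.6.5]{Forstneric_book}. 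Similarly, $\Aff^G=\Gl_2^G(\C)$ is a connected Oka Lie group by Lemma \ref{lem:Gl_connected}.

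With this in hand, I would induct on a uniform bound $N$ for $\deg\phi(x)$, available thanks to \eqref{eq:bounded_degree}. The base case $N=1$ reduces $\phi$ to a map into the Oka Lie group $\Aff^G$, so the conclusion is immediate from the basic Oka property with approximation and jet interpolation for maps from open Riemann surfaces into Oka manifolds \cite[5.15]{Forstneric_book} (the $\cO(X)$-convexity of $K$ makes $K\cup X_0$ a suitable interpolation set, since $X_0$ is discrete). For the inductive step, I identify the generic polydegree $d^*$ of $\phi$ on its domain; on the open set $V\subseteq\Omega$ where $\phi(x)$ has polydegree exactly $d^*$, the map $\phi|_V$ lands in the Oka manifold $\cA_{d^*}^G$. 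Using the bundle structure \eqref{eq:F-bundle} and the section formula \eqref{eq:section}, I would factor $\phi|_V$ holomorphically as $a\circ f\circ b$ with $a,b$ affine and $f$ taking values in the fiber $\cF_{d^*}^G$, then apply the basic Oka property for $\cF_{d^*}^G$ and $\Aff^G$ separately with appropriate approximation and jet interpolation data. Composing $\phi$ with the inverse of a judiciously interpolated outer elementary $G$-equivariant factor produces a family of strictly lower polydegree on $V$, to which the inductive hypothesis applies.

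The principal obstacle is controlling the extension across the discrete strata-jumping locus $\Sigma=\Omega\setminus V$, where $\phi(x)\notin\cA_{d^*}^G$. On $\Sigma$ the map $\phi$ has polydegree strictly smaller than $d^*$, and its values lie in $\cA_{d^*}^G$ only as degenerate limits (with leading coefficients of the elementary factors vanishing), so the Oka principle for $\cA_{d^*}^G$ cannot be applied naively across $\Sigma$. Since $X$ is one-dimensional, $\Sigma$ is discrete, and I would handle it by enlarging $X_0$ to include $\Sigma\cap(K\cup X_0)$ and invoking Lemma \ref{lem:perturbing_composition} to ensure that jet interpolation of sufficiently high order at these points preserves the required polydegree drop under composition. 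Keeping this control throughout the inductive reduction, together with the $G$-equivariance of each factor (guaranteed by Lemmas \ref{lem:G} and \ref{lem:AG-is-Oka}), is what I expect to be the technically most delicate part of the argument.
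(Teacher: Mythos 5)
Your general framework --- stratify $\Autalg^{G}(\C^2)$ by polydegree, observe via Lemma \ref{lem:AG-is-Oka} and Remark \ref{rem:FG-is-Oka} that each stratum $\cA_d^G$ is Oka, and then run an interpolation--approximation argument --- is the natural starting point, and your preliminary observations are correct. The genuine gap is in the inductive step, exactly at the place you flag as delicate. At a point $x_0$ of the strata-jumping locus $\Sigma$ the Jung--van der Kulk factors of $\phi$ are only \emph{meromorphic}: the outer elementary factor typically has a pole at $x_0$, since its coefficients are rational in the coefficients of $\phi$ and the amalgamated-product decomposition does not extend continuously across a polydegree drop. Consequently there is no holomorphic ``judiciously interpolated outer elementary factor'' agreeing with the true one to any order at $x_0$, and composing $\phi$ with the inverse of a holomorphic approximation does \emph{not} lower the polydegree near $x_0$ --- generically it leaves it unchanged or raises it. Lemma \ref{lem:perturbing_composition} cannot repair this: it guarantees that a composition of meromorphic factors \emph{stays holomorphic} when each factor is perturbed to sufficiently high order, but it says nothing about preserving a polydegree drop when a factor is replaced by a holomorphic map at one of its poles. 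Two secondary issues: $\Omega$ is merely a neighborhood of $K\cup X_0$ with $X_0$ possibly infinite, so ``the generic polydegree of $\phi$ on its domain'' is not well defined (different components may carry different polydegrees); and the required approximation on $K$ includes the points of $\Sigma\cap K$, where your factorization is unavailable.

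The paper circumvents all of this by a different mechanism. Instead of peeling off factors inductively, it decomposes $\phi|_{U}$ \emph{once} as an element of $\Aut^{G}(\A^{2}_{\cM(U)})$ over the \emph{field} of meromorphic functions (Lemmas \ref{lem:G} and \ref{lem:AG-is-Oka} applied with $\kk=\cM(U)$), records the finitely many nonzero coefficients $c_1,\dots,c_m\in\cM(U)$ of the factors, and proves the key algebraic fact \eqref{eq:key_claim}: substituting \emph{arbitrary} values $\gamma_j$ (including $0$) for the $c_j$ still yields a $G$-equivariant automorphism $\phi^{\langle\gamma\rangle}$ fixing the origin. This coefficient parametrization is insensitive to polydegree drops, so the whole problem reduces to deforming finitely many scalar meromorphic functions via the Mittag-Leffler/Weierstrass Lemma \ref{lem:Mittag-Leffler}, with Lemma \ref{lem:perturbing_composition} used only for its intended purpose of keeping the composition holomorphic at the interpolation points. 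The remaining bookkeeping over the finitely many polydegrees occurring among the germs at $X_0$ is handled by a finite product $\beta^{(n)}_{t}\circ\dots\circ\beta^{(1)}_{t}\circ\alpha$ rather than by induction on a degree bound. If you want to salvage your approach, the statement you must prove is essentially \eqref{eq:key_claim}; at that point you will have reproduced the paper's argument.
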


By Lemma \ref{lem:generic_linearization} we can choose coordinates on $\C^{2}$ in such a way that $G$ in Proposition \ref{prop:Oka_explicit} acts linearly. 
\smallskip

If $G$ is not cyclic, then by Lemma \ref{lem:Aut_Lie-group} $\Autalg^{G}(\C)$ is a connected Lie group, hence an Oka manifold \cite[5.6.1]{Forstneric_book}. Therefore, Proposition \ref{prop:Oka_explicit} holds for such $G$. The difficult part is to prove it for cyclic $G$.
%The following lemma settles the case  when $G$ is not cyclic.
%
%\begin{lem}\label{lem:G-not-cyclic}
%	Let $G\subseteq \Gl_{2}(\C^{2})$ be a linear group.
%	\begin{enumerate}
%		\item\label{item:GL_Oka} $\Aff_{2}^{G}=\Gl_{2}^{G}(\C)$, which is a connected Oka manifold
%		\item\label{item:G-not-cyclic} Assume that $G$ is not cyclic. Then $\Autalg^{G}(\C^2)=\Gl_{2}^{G}(\C)$.
%	\end{enumerate}
%	In particular, Proposition \ref{prop:Oka_explicit} holds if $G$ is not cyclic.
%\end{lem}
%\begin{proof}
%	\ref{item:GL_Oka} The first equality follows from the fact that $G$ acts linearly. Because $\Gl_{2}^{G}(\C)$ is a Lie group, it is Oka by \cite[5.6.1]{Forstneric_book}. To see that it is connected, note that  it is a Zariski-open subset of the space of $G$-equivariant $2\times 2$ matrices, which in turn is a linear subspace of $\C^{2\times 2}$, and the latter is clearly connected. %
%
%	\ref{item:G-not-cyclic} By Remark \ref{rem:G_abelian}, $\Autalg^{G}(\C^2)\subseteq \Aff^{G}$, which equals $\Gl_{2}^{G}$ by \ref{item:GL_Oka}.
%\end{proof}
\medskip

We will use the following combination of Mittag-Leffler and Weierstrass theorems, see \cite[Exercise 2.16.7 and Theorem 3.12.1]{NR_Intro_to_Riemann_Surfaces} or \cite{Florack}.
\begin{lem}\label{lem:Mittag-Leffler}
	Let $K,X_{0},\Omega \subseteq X$, $(r_{x})_{x\in X_0}$ and $\epsilon$ be as in Proposition \ref{prop:Oka_explicit}. Fix $f\in \cM(\Omega)$ which is holomorphic on $K$ and does not vanish on $K\setminus X_{0}$. Then there is a continuous map $X\times [0,1]\ni (x,t)\mapsto f_{t}(x)\in \P^{1}$ such that each $f_{t}\in \cM(\Omega)$ is holomorphic on $K$; $f-f_{t}$ has zero of order at least $r_{x}+1$ at every $x\in X_{0}$, satisfies $|f-f_{t}|<\epsilon$ on $K$, and $f_{1}\in \cM(X)\cap \cO(X\setminus X_{0})^{*}$.
\end{lem}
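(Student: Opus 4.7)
I would produce a single $f_1 \in \cM(X) \cap \cO(X\setminus X_0)^*$ satisfying the required data---approximation of $f$ to within $\epsilon$ on $K$ and agreement with $f$ to order $r_x$ at each $x \in X_0$---and then set $f_t \de (1-t)f + t f_1$, viewed as a $\P^1$-valued map near the poles. Linearity gives every conclusion at once: $f_0 = f$, $f_1$ is as required, $f - f_t = t(f - f_1)$ vanishes to order $\geq r_x + 1$ at each $x \in X_0$, and $|f - f_t|_K \leq |f - f_1|_K < \epsilon$. The common principal parts of $f$ and $f_1$ at each $x\in X_0$ (forced by the jet condition up to order $r_x$) guarantee that $f_t$ has the same pole structure for every $t$ and is continuous into $\P^1$. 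So the task reduces to constructing $f_1$.

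\textbf{Step 1 (reduction to units).} After shrinking $\Omega$, I can assume $f$ is meromorphic on $\Omega$ with zeros and poles only at $X_0$; let $n_x \de \mathrm{ord}_x f \in \Z$. By the classical Weierstrass theorem for open Riemann surfaces---every divisor is principal (Florack \cite{Florack}, cf.\ \cite[Thm.~3.12.1]{NR_Intro_to_Riemann_Surfaces})---I choose $g \in \cM(X) \cap \cO(X\setminus X_0)^*$ with divisor $\sum_{x\in X_0} n_x\cdot [x]$. Then $h \de f/g$ is holomorphic and nowhere vanishing on some open neighborhood $\Omega' \supseteq K \cup X_0$, so the problem is reduced to producing $\tilde h \in \cO(X)^*$ that approximates $h$ on $K$ within $\epsilon/\sup_K|g|$ and interpolates the Taylor expansion of $h$ at each $x\in X_0$ to order $r_x + |n_x| + 1$; then $f_1 \de g\tilde h$ will lie in $\cM(X)\cap \cO(X\setminus X_0)^*$ and satisfy the approximation and jet conditions.

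\textbf{Step 2 (Oka for $\C^*$).} The existence of $\tilde h$ is the basic Oka property with approximation and jet interpolation for holomorphic maps from the Stein manifold $X$ into the connected complex Lie group $\C^*$, which is Oka. To apply it one needs a continuous extension of $h$ to a map $X\to \C^*$; since an open Riemann surface is homotopy equivalent to a $1$-dimensional CW complex, all obstructions in cohomological degree $\geq 2$ vanish automatically, and the only possible obstruction lies in $H^1(X,\Z)$. This remaining obstruction can be absorbed, if necessary, by a preliminary modification of $g$ (e.g.\ adjoining a Weierstrass factor with divisor supported on a carefully chosen discrete set in $X\setminus \Omega'$) so that the new $h$ is globally null-homotopic. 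Alternatively, the whole Step 2 can be read off from a combination of Mittag--Leffler and Runge on open Riemann surfaces, exactly as in \cite[Exercise 2.16.7]{NR_Intro_to_Riemann_Surfaces}.

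\textbf{Main obstacle.} The subtle point is ensuring that $f_1$ has no zeros on $X\setminus X_0$. A straightforward Runge-type approximation of $h$ by some $\tilde h \in \cO(X)$ would generically introduce extra zeros outside $X_0$; deleting them by a further Weierstrass factor would in turn spoil both the approximation on $K$ and the jet interpolation at $X_0$. Invoking the Oka property directly for the target $\C^*$ is what breaks this circularity and yields $\tilde h \in \cO(X)^*$ in a single stroke. Once $\tilde h$ is in hand, the linear homotopy $f_t$ trivially delivers all conclusions of the lemma.
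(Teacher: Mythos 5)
Your proposal is correct and takes essentially the same route as the paper: the paper views $f$ as a nowhere-vanishing section, over a neighbourhood of $K\cup X_0$, of the line bundle $\cL$ attached to the divisor $\sum_x (\mathrm{ord}_x f)\, x$ and applies the basic Oka property to such sections, which on an open Riemann surface (where $\cL$ is trivial, a trivialization being exactly your Weierstrass factor $g$) is the same as your Step 2 for the $\C^{*}$-valued function $h=f/g$, and your linear homotopy $f_t=(1-t)f+tf_1$ is a legitimate substitute for the Oka homotopy since $f-f_1$ is holomorphic at each $x\in X_0$. The one imprecise spot is your treatment of the extension obstruction in Step 2: it lives in the cokernel of $H^{1}(X;\Z)\to H^{1}(\Omega';\Z)$ and vanishes automatically because $K$ is $\cO(X)$-convex and $X_{0}$ is discrete (so this restriction is surjective for a suitably shrunk $\Omega'$); no preliminary modification of $g$ is needed, and the one you sketch would not obviously realize an arbitrary class.
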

\begin{proof}
	For $x\in X_{0}$ let $k_{x}$ be the order of zero of $f$ at $x$ (so $k_{x}<0$ if $f$ is not holomorphic at $x$), and let $\mathcal{L}$ be the line bundle associated to the divisor $\sum_{x\in X_0}k_{x}x$. On some neighborhood $\Omega'\subseteq \Omega$ of $X_0\cup K$, $f$ trivializes $\mathcal{L}$. Since $H^{2}(X,\Z)=0$ as $X$ is a Stein space of dimension $1$, $f$ extends to a continuous trivialization $f_{0}$ of $\mathcal{L}$. The result follows from the basic Oka property for sections of $\mathcal{L}$ \cite[5.4.4]{Forstneric_book}.
\end{proof}

\begin{proof}[Proof of Proposition \ref{prop:Oka_explicit}]
%	If $G$ is not cyclic, then by Remark \ref{rem:G_abelian} $\Autalg^{G}(\C^2)$ is a closed subgroup of a Lie group $\Aff_{2}(\C)$, and thus an Oka manifold \cite[5.6.1]{Forstneric_book}. Therefore, we can assume that $G$ is cyclic.
	Recall that by Lemma \ref{lem:generic_linearization} we can assume that $G\subseteq \Gl_{2}(\C)$ acts linearly. If $G$ is trivial then the proposition follows from \cite[Theorem 1.2]{FL_Oka-for-groups}, so we assume that $G\neq \{\id\}$.
%	
%	First, we make the following technical simplification, which will allow us to apply Lemma \ref{lem:composition_agrees}.
	
	\begin{claim}\label{cl:origin}
		We can assume that $\phi$ fixes the origin, i.e.\ $\phi(x)(0)=0$ for every $x\in \Omega$.
	\end{claim}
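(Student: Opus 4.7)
The plan is to reduce to the origin-fixing case by post-composing $\phi$ with a holomorphic family of $G$-equivariant translations. Since $G\subseteq \Gl_{2}(\C)$ acts linearly, $G$ fixes the origin, so for every $g\in G$ and $x\in \Omega$ the equivariance of $\phi(x)$ gives $g(\phi(x)(0))=\phi(x)(g\cdot 0)=\phi(x)(0)$. Therefore the holomorphic map $a\colon \Omega\to \C^{2}$ defined by $a(x)\de \phi(x)(0)$ takes values in the fixed subspace $V\de (\C^{2})^{G}$.

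The first step is to extend $a$ to a homotopy $a_{t}\colon X\to V$ enjoying the analogues of properties \ref{item:phi_1-holo}--\ref{item:interpolation}. Such a homotopy exists because $V$ is a finite-dimensional complex vector space, in particular an Oka manifold (being a connected complex Lie group), so the basic Oka property with approximation and jet interpolation applies to maps $X\to V$; alternatively, Lemma \ref{lem:Mittag-Leffler} applied coordinate-wise suffices on the open Riemann surface $X$. Next, set $\phi'(x)(z)\de \phi(x)(z)-a(x)$; this lies in $\Autalg^{G}(\C^{2})$ because translation by a vector of $V$ commutes with $G$, satisfies $\phi'(x)(0)=0$, and has $\deg\phi'(x)=\deg\phi(x)$, so the degree bound \eqref{eq:bounded_degree} is inherited. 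Assuming the proposition in the origin-fixing case and applying it to $\phi'$ with $\epsilon/2$ in place of $\epsilon$ would yield a homotopy $\phi'_{t}\colon X\to \Autalg^{G}(\C^{2})$ with the required properties for $\phi'$.

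To recombine, set $\phi_{t}(x)(z)\de \phi'_{t}(x)(z)+a_{t}(x)$. Each $\phi_{t}(x)$ lies in $\Autalg^{G}(\C^{2})$ since both summands are $G$-equivariant; on $K\cup X_{0}$ one has $\phi_{0}=(\phi-a)+a=\phi$; the map $\phi_{1}$ is holomorphic on $X$; the triangle inequality turns the two $(\epsilon/2)$-bounds into an $\epsilon$-approximation on $K$; and the jet interpolation at each $x\in X_{0}$ splits additively between $\phi'_{t}$ and $a_{t}$. I do not anticipate any real obstacle here: this is a standard translation trick, whose only non-formal ingredients are the observations that $a$ takes values in $V$ and that $V$ is Oka. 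Thus the remainder of the proof of Proposition \ref{prop:Oka_explicit} may safely assume $\phi(x)(0)=0$ throughout $\Omega$.
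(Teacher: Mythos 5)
Your proposal is correct and follows essentially the same route as the paper: observe that $\phi(x)(0)$ lies in the $G$-fixed subspace (the paper uses $G\neq\{\id\}$ to write it as $\lambda(x)v$ for a scalar $\lambda\in\cO(\Omega)$ and applies Lemma \ref{lem:Mittag-Leffler} to $\lambda$, which is your coordinate-wise variant), then subtract the $G$-equivariant translation and recombine. The only cosmetic caveat is that the $\epsilon/2$ triangle-inequality bookkeeping should be replaced by choosing the tolerance via the continuity of composition (Lemma \ref{lem:topology}), since the metric \eqref{eq:metric} also involves inverses; the paper handles this equally implicitly.
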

	\begin{proof}
		Because $\phi(x)$ is equivariant with respect to the linear action of $G$, for every $g\in G$ we have $g(\phi(x)(0))=g(\phi(x)(g^{-1}0))=(g\circ\phi(x)\circ g^{-1})(0)=\phi(x)(0)$, that is, $\phi(x)(0)$ is a common eigenvector of all $g\in G$, with eigenvalue $1$. Since $G\neq \{\id\}$, it follows that $\phi(x)(0)=\lambda(x)v$ for some $\lambda\in \cO(\Omega)$ and a fixed $v\in \C^{2}$. By Lemma \ref{lem:Mittag-Leffler}, there is a homotopy $\{\lambda_{t}\}_{t\in [0,1]}$ such that $\lambda_{1}\in\cO(X)$, $\lambda_{t}$ is close to $\lambda$ on $K$, and $\lambda_{t}$ agrees with $\lambda$ up to order $r_{x}$ at each $x\in X_0$. Thus to prove Proposition \ref{prop:Oka_explicit}, it suffices to prove it for a holomorphic map $\Omega \ni x\mapsto [z\mapsto \phi(x)(z)-\phi(x)(0)]\in \Autalg^{G}(\C^{2})$, which clearly fixes the origin. 
	\end{proof}

	By Claim \ref{cl:origin} we can, and therefore will, prove our proposition with $\Autalg^{G}(\C^2)$ replaced by its subgroup $\mathcal{G}$ consisting of those automorphisms which fix the origin of $\C^{2}$. More precisely, we assume that $\phi$ takes values in $\mathcal{G}$, and we will construct $\phi_{t}$ with values in $\mathcal{G}$, too.
	
	\begin{claim}
		For some neighborhood $U\subseteq \Omega$ of $K\cup X_0$, $\phi|_{U}$ extends to a continuous map $\phi_0\colon X\to \mathcal{G}$.% such that $\phi_0|_{U}=\phi|_{U}$ for some neighborhood $U\subseteq \Omega$ of $K\cup X_0$.
	\end{claim}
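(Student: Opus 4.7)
The claim is a purely topological extension problem: shrink $\Omega$ to a smaller neighborhood $U$ of $K\cup X_0$, then extend the continuous map $\phi|_U$ to a continuous $\phi_0\colon X\to \mathcal{G}$. My plan is to reduce to a cellular extension using (a) the structural description of $\mathcal{G}$ from Section~\ref{sec:FM}, which I would use to establish path-connectedness of $\mathcal{G}$ in the compact-open topology; and (b) the fact that $X$, being a non-compact Riemann surface, deformation retracts onto a 1-dimensional CW complex.

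First, I would verify path-connectedness of $\mathcal{G}$. By Lemma~\ref{lem:AG-is-Oka} and the explicit form of elements of $\cF_d^G$ furnished by Lemma~\ref{lem:G}, every $\phi\in \mathcal{G}$ can be written using $G$-equivariant elementary factors $e_{q_j}$ and diagonal/affine pieces. Scaling each $q_j$ to $t q_j$ preserves the linear $G$-equivariance constraints on the coefficients of $q_j$ from Lemma~\ref{lem:G}, and yields a continuous path (in compact-open topology) from $\phi$ to an affine element $s\circ \tau^{m-1}$ of $\mathcal{G}$; since $\phi$ fixes the origin, this affine limit lies in $\Gl_2^G(\C)$, which is path-connected by Lemma~\ref{lem:Gl_connected}. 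A small bundle-structure verification from Lemma~\ref{lem:AG-is-Oka}\ref{item:scalar}--\ref{item:cyclic} handles the case where $\phi$ lies in an ``other fiber'' of $\gamma^G_d$.

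Next, I would choose $U=U_K\cup \bigsqcup_{x\in X_0} D_x$, where $U_K$ is a relatively compact open neighborhood of $K$ with $\bar{U}_K\subseteq \Omega$ and every connected component of $X\setminus \bar{U}_K$ non-compact (possible by $\cO(X)$-convexity of $K$ via the Behnke--Stein theorem), and each $D_x$ is a small open disk around $x\in X_0$ with pairwise disjoint closures in $\Omega\setminus \bar{U}_K$. Every connected component of $X\setminus U$ is then a non-compact connected 2-manifold with smooth boundary, which deformation retracts onto a 1-dimensional CW complex $\Gamma$ containing $\partial U$ as a subcomplex.

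Finally, I would extend $\phi|_{\partial U}$ to $\Gamma$ by assigning arbitrary values in $\mathcal{G}$ at the interior 0-cells and joining the endpoints of each 1-cell via a path in $\mathcal{G}$; such paths exist by the path-connectedness of $\mathcal{G}$. Pulling back along the retraction $X\setminus U\to \Gamma$ and gluing with $\phi|_U$ gives the desired continuous $\phi_0\colon X\to \mathcal{G}$. The main obstacle would have been disconnectedness of $\mathcal{G}$: if $\phi|_K$ and some $\phi(x)$ lay in different path-components, extension along any 1-cell of $\Gamma$ joining $\partial U_K$ to $\partial D_x$ would fail. This is precisely circumvented by the path-connectedness step above, which itself rests on the explicit equivariant description provided by Lemmas~\ref{lem:AG-is-Oka} and~\ref{lem:G}.
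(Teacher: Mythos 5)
Your proposal is correct, and its topological skeleton is the same as the paper's: choose $U$ with smooth boundary, retract $X\setminus U$ onto an embedded graph $\Gamma\supseteq \partial U$ (the paper invokes Whitehead's results via \cite[Lemma 4]{KuL}), and extend over the one-dimensional complex $\Gamma$ using connectedness of the target. Where you genuinely differ is in how that connectedness is obtained. The paper does not touch the Jung--van der Kulk decomposition at this point: it composes $\phi$ with the retraction $d_{0}\colon\mathcal{G}\ni\alpha\mapsto\alpha'(0)\in\Gl_{2}^{G}(\C)$ from \cite[Lemma 2.1]{FL_Oka-for-groups} (which lands in the equivariant linear group because $G$ acts linearly), so that only the connectedness of the finite-dimensional group $\Gl_{2}^{G}(\C)$, i.e.\ Lemma \ref{lem:Gl_connected}, is needed to extend over $\Gamma$. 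You instead prove path-connectedness of all of $\mathcal{G}$ by scaling the elementary factors $e_{q_{j}}$ to $e_{tq_{j}}$; this does work, since the equivariance constraints of Lemma \ref{lem:G} are homogeneous linear conditions on the coefficients of the $q_{j}$ and hence are preserved by scaling, and the disconnectedness of an individual stratum $\cA_{d}^{G}$ in case \ref{item:cyclic} of Lemma \ref{lem:AG-is-Oka} is harmless because your path is allowed to drop the polydegree. In effect you are rebuilding by hand the deformation retraction $\alpha_{t}(z)=t^{-1}\alpha(tz)$ of $\mathcal{G}$ onto $\Gl_{2}^{G}(\C)$ that underlies $d_{0}$. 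Your variant buys a cleaner gluing along $\partial U$: you extend $\phi|_{\partial U}$ itself over $\Gamma$, so $\phi_{0}$ agrees with $\phi$ there on the nose, whereas the paper's $\tilde{\phi}\circ r$ equals $d_{0}\circ\phi$ on $\partial U$ and implicitly requires a collar interpolation (or the alternative the paper mentions, contracting $\Gamma$ back into $\bar{U}$). The paper's version buys brevity and independence from the structure theory of Section \ref{sec:FM}, which is not otherwise needed for this purely topological step.
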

	\begin{proof}
		Let $U$ be an open neighborhood of $K\cup X_{0}$ such that $\bar{U}\subseteq \Omega$ and $\d U$ is smooth. Whitehead's results give a retraction $r\colon X\setminus U\to \Gamma$ onto an embedded graph $\Gamma$ containing  $\d U$: for a very detailed proof of this fact see e.g. \cite[Lemma 4]{KuL}. On the other hand, we have a retraction $d_{0}\colon \mathcal{G}\ni \alpha\mapsto \alpha'(0)\in \Gl_{2}^{G}(\C)$, see \cite[Lemma 2.1]{FL_Oka-for-groups}: indeed, since $G$ acts linearly, $\alpha'(0)$ is $G$-equivariant whenever $\alpha$ is. By Lemma \ref{lem:Gl_connected}, $\Gl_{2}^{G}(\C^2)$ is a connected manifold, so $d_{0}\circ \phi|_{\bar{U}}$ extends to a continuous map $\tilde{\phi}\colon \bar{U}\cup \Gamma \to \Gl_{2}(\C)$. Now, $\phi_{0}\de \tilde{\phi} \circ r$ is the required extension. 
		
		We remark that, since $K\cup X_0$ has no relative holes in $X$, we could have chosen $U$ in such a way that there is a retraction $h\colon \bar{U} \cup \Gamma \to \bar{U}$. This way, instead of extending $d_0\circ \phi$, one could simply put $\tilde{\phi}=d_{0}\circ \phi|_{\bar{U}}\circ h$, i.e.\ contract the graph.
	\end{proof}

	\begin{claim}\label{cl:fixed_polydegree}
	Let $U$ be a neighborhood of $K$ such that $\phi|_{U}$ is holomorphic; and let $d$ be the polydegree of $\phi|_{U}$, viewed as an element of $\Aut(\A^{2}_{\cM(U)})$. Then our proposition holds under following additional assumption: $X_0=X_{\id}\cup X_{d}$, $\phi$ agrees up to order $r_x$ with $\id$ at every $x\in X_{\id}$ and with some $\alpha_{x}\in \cA^{G}_{d}$ at every $x\in X_{d}$.
	\end{claim}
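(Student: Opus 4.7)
The plan is to combine the explicit parametrization of $\cA_d^G$ given by Lemma \ref{lem:AG-is-Oka} and Remark \ref{rem:FG-is-Oka} with coefficient-by-coefficient applications of Lemma \ref{lem:Mittag-Leffler}.

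The first step is to reduce to the situation where $\phi$ maps into a single fiber $\cF_d^G$ of the bundle $\gamma_d^G$. By Lemma \ref{lem:AG-is-Oka}, after a linear change of coordinates on $\C^{2}$ (harmless since $G$ acts linearly and conjugation by a linear map preserves $\mathcal{G}$ up to relabelling) we are in one of three cases. Cases \ref{item:cyclic} and \ref{item:other} reduce directly to the fiber case: $\cA_d^G$ is either $\cF_d^G$ itself or a disjoint union of two copies of it, and by connectedness $\phi(U)$ lies in one component. In case \ref{item:scalar}, the map $U \ni x \mapsto (\ll_{\phi(x)^{-1}}, \ll_{\phi(x)}) \in \P^1 \times \P^1$ is holomorphic, and since $\P^1 \times \P^1$ is a homogeneous Oka manifold, I can extend it to a holomorphic map $X \to \P^1 \times \P^1$ that approximates the original on $K$, sends each $x \in X_d$ to the point corresponding to $\alpha_x$, and sends each $x \in X_{\id}$ to a fixed base point, say $(0,0)$. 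A local trivialization of $\gamma_d^G$ along the image, together with the explicit sections \eqref{eq:section}, then reduces the problem to constructing a map with values in $\cF_d^G$.

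For the second step, by \eqref{eq:F} and Lemma \ref{lem:G} the $G$-equivariance constraints leave only certain coefficients of $s \in \hat{S}$ and of $q_1,\ldots,q_m$ free, and the isomorphism $\cF_d^G \cong (\C^*)^{M'} \times \C^{N'}$ for some $M', N' \geq 0$ records these as coordinates. Writing $\phi|_U = f_{s(x),q(x)}$ expresses each surviving coefficient as a holomorphic function on $U$. The interpolation data at $X_d$ is read off by writing each $\alpha_x$ in this parametrization, while at $X_{\id}$ it amounts to requiring the $\C^*$-valued leading coefficients of the $q_j$'s to vanish (collapsing the polydegree) while the remaining coefficients combine via $f_{s,q}$ to give $\id$. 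Applying Lemma \ref{lem:Mittag-Leffler} coefficient by coefficient extends each coefficient from $U$ to a meromorphic function on $X$, holomorphic and nonvanishing on $X \setminus X_{\id}$ where required, with zeros of prescribed order at $X_{\id}$, values at $X_d$ matching the interpolation data up to order $r_x$, and approximating the original on $K$. Reassembling through $f_{s,q}$ yields the holomorphic map $\phi_1 \colon X \to \mathcal{G}$, and the coefficient-wise homotopies combine into a homotopy $(\phi_t)_{t\in[0,1]}$; Lemma \ref{lem:topology} guarantees that coefficient-level closeness translates into uniform closeness on compacts.

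The main obstacle is the polydegree mismatch at $X_{\id}$: the reassembled map should lie in $\cA_d^G \subseteq \mathcal{G}$ generically while specialising to $\id \in \Aff \setminus \cA_d^G$ at each $x \in X_{\id}$. This is precisely why Lemma \ref{lem:Mittag-Leffler} is phrased as a combined Mittag--Leffler and Weierstrass result: it simultaneously allows zeros of controlled order at $X_{\id}$ (needed to collapse the polydegree there) and prescribed finite nonzero values at $X_d$, without introducing spurious zeros or poles elsewhere. Once this is in place, the rest of the argument is coordinate-wise bookkeeping enabled by Lemma \ref{lem:topology}.
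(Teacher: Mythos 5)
Your overall architecture is close to the paper's: both arguments parametrize $\cA_d^G$ by the coefficients surviving the constraints of Lemma \ref{lem:G}/Remark \ref{rem:FG-is-Oka} and then run Lemma \ref{lem:Mittag-Leffler} coefficient by coefficient. But there is a genuine gap at the heart of your second step: the assertion that writing $\phi|_U=f_{s(x),q(x)}$ (or more generally $\sigma^{-1}\circ a\circ f_{s,q}\circ\sigma$) ``expresses each surviving coefficient as a holomorphic function on $U$'' is false. The Jung--van der Kulk decomposition is taken over the \emph{field} $\cM(U)$, so the individual factors --- the chart coordinates $\lambda,\mu$ of the lines $\ll_{\phi(x)^{\pm1}}$, and the coefficients of $s$ and the $q_j$ extracted by successive divisions when $m\geq 2$ --- are in general only meromorphic on $U$, with poles at points where $\phi(x)$ degenerates (e.g.\ where $\ll_{\phi(x)}$ hits the chart's point at infinity), even though $\phi$ itself is holomorphic there. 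The paper's proof is built around exactly this difficulty: it collects the pole set $X_0'$ of these coefficients, invokes Lemma \ref{lem:perturbing_composition} to produce orders $s_x$ such that any coefficient tuple agreeing with the original one to order $s_x$ at $x\in X_0'$ still reassembles to a map holomorphic at $x$, and feeds these extra interpolation conditions into Lemma \ref{lem:Mittag-Leffler}. Without this step your reassembled $\phi_1$ may simply fail to be holomorphic at the points of $X_0'$ (which can even meet $K$), so conclusions \ref{item:phi_1-holo} and \ref{item:approximation} are not established. This is not a cosmetic omission: Remark \ref{rem:dim_1} identifies the control of these poles as the reason the whole proposition is restricted to one-dimensional $X$.

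A secondary problem is your reduction to the fiber in case \ref{item:scalar} of Lemma \ref{lem:AG-is-Oka}. The bundle $\gamma_d^G$ is only trivialized over the affine charts $U_\eta\times U_\theta$, and the extended map $X\to\P^1\times\P^1$ need not land in a single chart, so ``a local trivialization along the image'' does not give a global reduction to $\cF_d^G$. The paper avoids this by not reducing at all: it keeps the chart coordinates $\lambda_j$ of $a$ among the meromorphic coefficients $c_j$ and treats their poles on the same footing as the others, via $X_0'$ and Lemma \ref{lem:perturbing_composition}. Finally, note that at points where leading coefficients vanish (your $X_{\id}$, but also possibly elsewhere) the map leaves $\cF_d^G$, so $G$-equivariance of the reassembled map needs the separate observation \eqref{eq:key_claim} (proved in the paper by a limiting argument); you use this implicitly without justification, though this part is easy to repair.
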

	\begin{proof}
		If $K=\emptyset$, we choose $U$ to be a neighborhood of some $x\in X_0$ such that $\phi|_{U}$ is holomorphic. Thus in any case, we can assume $U\neq \emptyset$. We can also assume that $d\neq \emptyset$. Indeed, otherwise $\phi|_{U\cup X_0}\in $ is affine, so the claim follows from the Oka property of $\Aff^{G}_{2}$. The latter is a connected Lie group by Lemma \ref{lem:Gl_connected}, hence Oka by \cite[5.6.1]{Forstneric_book}.
		
		Applying Lemma \ref{lem:AG-is-Oka} over the field $\cM(U)$, we see that $\phi|_{U}=\sigma^{-1}\circ a\circ f\circ \sigma$ for some $\sigma,a\in \Gl_{2}(\cM(U))$ and $f\de f_{s,q}\in \cF^{G}_{d}$ as in Lemma \ref{lem:G}. Here $a$ comes from \eqref{eq:section}, so it equals $\id$ in cases \ref{item:cyclic},\ref{item:other} of Lemma \ref{lem:AG-is-Oka}, and in case \ref{item:scalar} it is a composition of maps denoted in \eqref{eq:section} by $a_{\theta,\mu}$ and $a_{\eta,\lambda}$, which are of type $(x,y)\mapsto(x,\lambda_{j} x+y)$ or $(\lambda_{j} x+y,x)$ for some $\lambda_{j}\in \cM(U)$, $j\in \{1,2\}$. In cases \ref{item:cyclic},\ref{item:other} put $\lambda_{1}=\lambda_{2}=0$.
		
		Let $c\de(c_{1},\dots, c_{m})\in \cM(U)^{m}$ be a list of all nonzero coefficients of $s$, of the polynomials in $q$, and $\lambda_{j}$'s. Let $\kk=\cM(U)$ or $\cM_{X,x}$. For $\gamma\in \kk^{m}$ define $\phi^{\langle \gamma \rangle}$ by the same formula as $\phi|_{U}$, but with each $c_{j}$ replaced by $\gamma_{j}$. We claim that
		\begin{equation}\label{eq:key_claim}
		\mbox{for any } \gamma\in \kk^{m}, \phi^{\langle \gamma \rangle}\in \Aut^{G}(\A^{2}_{\kk}) \mbox{ and } \phi^{\langle \gamma \rangle}(0)=0.
		\end{equation}
		
		Clearly, $\phi^{\langle \gamma \rangle}\in \Aut(\A^{2}_{\kk})$. To see that $\phi^{\langle \gamma \rangle}(0)=0$, recall that by Lemma \ref{lem:G} $\phi(0)=0$ means that $s\in D$ in \eqref{eq:with_T}, so it is equivalent to vanishing of certain coefficient of $s$, and therefore, it holds for $\phi^{\langle \gamma \rangle}$, too. 
		
		To prove $G$-equivariance, assume first that $\gamma\in(\kk^{*})^{m}$. Then the polydegrees of $\phi^{\langle \gamma \rangle}$ and $\phi|_{U}$ are the same, so $\phi^{\langle \gamma \rangle}\in \cA_{d}$. By Lemma \ref{lem:AG-is-Oka}, modifying $a$ does not change $G$-equivariance: indeed, in cases \ref{item:cyclic},\ref{item:other} $a$ stays equal to $\id$, and in case \ref{item:scalar} any $a$ is good. Eventually, recall that by Remark \ref{rem:FG-is-Oka} $\cF_{d}^{G}\subseteq \cF_{d}$ is cut out by vanishing of certain coefficients. This vanishing holds for the ones in $\tilde{\phi}$, too, because $\tilde{\phi}$ differs from $\phi_{x}$ only by the nonzero ones. Hence $\phi^{\langle \gamma \rangle}\in \cA_{d}^{G}$. %, as claimed.
		
		Now, for any $\gamma\in \kk^{m}$, $\phi^{\langle \gamma \rangle}$ is a limit of $\phi^{\langle \tilde{\gamma} \rangle}$ with $\tilde{\gamma}\in (\kk^{*})^{m}$. We have proved that $\phi^{\langle\tilde{\gamma}\rangle}\in \cA^{G}_{d}$ are $G$-equivariant, hence their limit $\phi^{\langle \gamma \rangle}$ is $G$-equivariant, too. This ends the proof of  \eqref{eq:key_claim}. Alternatively, one can observe that $\phi^{\langle \gamma \rangle}$ satisfies the conditions of Lemma \ref{lem:G}, too, even though its polydegree may drop: hence the argument used above for $\gamma\in (\kk^{*})^{m}$ works for $\gamma\in \kk^{m}$,  too. 
		\smallskip
		
		Put $X_0'=\bigcup_{j=1}^{m}X'_{j}$, where $X'_{j}$ is the set of poles of $c_{j}\in \cM(U)$. By Lemma \ref{lem:perturbing_composition}, for each $x\in X_0'$ there is an integer $s_{x}>0$ such that if $\gamma_{j}$ agrees with $c_{j}$ up to order $s_{x}$ at $x$, then $\phi^{\langle \gamma \rangle}$ is holomorphic. Now for every $x\in X_0\cup X_0'$ define $v_{x}$ as $r_{x}$ if $x\in X_0\setminus X_0'$, $s_{x}$ if $x\in X_0'\setminus X_0$ and $\max\{s_{x},r_{x}\}$ if $x\in X_0\cap X_0'$.
		
		Recall that for any $x\in X_{0}$ we have $\alpha_{x}\in \cA^{G}_{d}$, so $\alpha_{x}=\phi^{\langle \eta \rangle}$ for some $\eta\in \cM_{X,x}^{m}$. It is also clear that $\id=\phi^{\langle \delta\rangle}$ for some $\delta\in \{0,1\}^{m}\subseteq \cM_{X,x}^{m}$, $x\in X_{\id}$. Thus for all $x\in X_0\cup X_0'$ there exists $\theta^{x}\in \cM_{X,x}^{m}$ such that the germ of $\phi$ at $x$ agrees with $\phi^{\langle \theta^{x} \rangle}$ up to order $v_{x}$.

		By Lemma \ref{lem:Mittag-Leffler}, for any $\epsilon_0>0$ there is a (continuous) homotopy $\{\gamma^{t}\}_{t\in [0,1]}$ such that for each $j\in \{1,\dots, m\}$, $\gamma_{j}^{0}|_{U}=c_{j}$, $\gamma^{1}_{j}\in \cM(X)\cap\cO(X\setminus X_0')$, $\gamma_{j}^{t}$ agrees with $\theta_{j}^{x}$ up to order $v_{x}$, and $|c_{j}-\gamma_{j}^{t}|<\epsilon_0$ on $K$. Put $\phi_{t}=\phi^{\langle \gamma^{t} \rangle}$. By Lemma \ref{lem:topology}, $(t,\phi)\mapsto \phi_{t}$ is continuous, and we can choose $\epsilon_0$ in such a way that $d_{K}(\phi,\phi_{t})<\epsilon$, so \ref{item:approximation} holds. Now, the choice of $v_{x}$ guarantees that \ref{item:phi_1-holo} and \ref{item:interpolation} hold, too.
	\end{proof}

	Applying Claim \ref{cl:fixed_polydegree} to $X_0\cap K$ and some smaller $\epsilon_0>0$, we get a holomorphic map $\alpha\colon X\to \mathcal{G}$ such that $d_{K}(\phi,\alpha)<\epsilon_0$ and $\alpha$ agrees with $\id$ up to order $r_{x}$ for every $x\in X_0\cap K$. The map $\beta\de \alpha^{-1}\circ\phi$ is holomorphic on $K\cup X_0$. Since $\alpha$ is holomorphic, it is of bounded degree, so $\beta$ is of bounded degree, too. Thus we can write $X_0$ as a finite disjoint union $\bigsqcup_{j=1}^{n} X_{j}$, such that for any $x\in X_{j}$, the germ of $\beta$ at $x$ %, treated as an element of $\Aut(\A^{2}_{\cM_{X,x}})$ 
	has polydegree $d_{j}$. Claim \ref{cl:fixed_polydegree} gives a homotopy $\{\beta^{(j)}_{t}\}_{t\in [0,1]}$ from $\beta^{(j)}_{0}=\beta$ to a holomorphic map $\beta^{(j)}_{1}\colon X\to \mathcal{G}$, such that $d_{K}(\beta,\beta^{(j)}_{1})<\epsilon_{0}$; and a holomorphic germ at $x\in X_{0}$ of $\beta^{(j)}_{t}$ agrees up to order $r_{x}$ with $\beta$ if $x\in X_{j}$, and with $\id$ if $x\in X_0\setminus X_{j}$. Therefore, at each $x\in X_0$, the map%Now, put
	\begin{equation*}
	\phi_{t}\de\beta^{(n)}_{t}\circ\dots\circ\beta^{(1)}_{t}\circ \alpha.
	\end{equation*}
	agrees with $\beta\circ\alpha=\phi$ up to order $r_x$, so \ref{item:interpolation} holds. 
	Because $\beta_{1}^{(j)}$ is holomorphic, $\phi_{1}$ is holomorphic, too, so \ref{item:phi_1-holo} holds. By Lemma \ref{lem:topology}, we can choose $\epsilon_0$ in such a way that $d_{K}(\phi_{t},\phi)<\epsilon$, so \ref{item:approximation} holds as well.
\end{proof}

\begin{rem}\label{rem:dim_1}
	Although \cite[Theorem 1.2]{FL_Oka-for-groups} shows that interpolation on discrete sets works for maps from arbitrary Stein spaces; to get approximation we need to restrict our attention to maps from open Riemann surfaces. Indeed, our idea is to treat such map as an element of $\Aut(\A^{2}_{\cM(X)})$, see Lemma \ref{lem:coefficients}\ref{item:iso_of_schemes}, and decompose it according to Jung's theorem, so the factors are necessarily meromorphic. If $X$ has dimension one, we only need to control the order of their poles, which is managed by Lemma \ref{lem:perturbing_composition}. However, this approach no longer works if the local rings are not DVRs.%in this approach we are not able e.g. to deal with indeterminacy sets of the factors. 
\end{rem}

\subsection{Remarks on the parametric Oka property}\label{sec:remarks}

In view of Proposition \ref{prop:Oka_explicit}, it is natural to ask if $\Autalg^{G}(\C^{2})$ admits any kind of \emph{parametric} Oka properties. 
\smallskip

Let us now recall this notion, see  \cite[5.4.4]{Forstneric_book} for details. Fix a closed subset $P_0$ of a topological space $P$. We say that $Y$ satisfies a \emph{parametric Oka property with interpolation} if for any subvariety $X_0$ of a Stein space $X$ and any continuous map $f_0\colon P\times X\to Y$ such that $f_0(p_0,\cdot)$ and $f_0(p,\cdot)|_{X_0}$ are holomorphic for every $p_0\in P_0$ and $p\in P$; there is a homotopy $\{f_t\}_{t\in [0,1]}$ such that $f_{1}(p,\cdot)$ is holomorphic for every $p\in P$, and $f_{t}=f_0$ on $P_0\times X \cup P\times X_0$, for every $t\in [0,1]$. As in the basic case $(P,P_0)=(\{\mathrm{pt}\},\emptyset)$, one can require $f_t$ to be uniformly close to $f_0$ on some compact set, too. 

For complex manifolds $Y$, these properties are equivalent to the basic ones, and characterize Oka manifolds \cite[5.15]{Forstneric_book}. However, in our case $Y=\Autalg^{G}(\C^2)$, the parametric version seems more subtle. %In particular, our proof does not easily generalize to this setting. 

In particular, our proof of Proposition \ref{prop:Oka_explicit} does not easily generalize to this setting. Indeed, we use an explicit description of $\Aut(\A^2_{\C})$ given by the van der Kulk theorem. In order to obtain a \emph{parametric} Oka property, one would need it for $\Aut(\A^{2}_{R})$, where $R$ is the ring of continuous functions on the parameter space. But, as we have seen in Section \ref{sec:prelim_lin}, it is very difficult to get such description when $R$ is not a field.
\smallskip

Up to now, the only parametric Oka properties for $\Autalg(\C^n)$ were obtained by Forstneri\v{c} and L\'arusson in \cite{FL_Oka-for-groups}. The main tool used for approximation (see e.g.\ in \S 3 loc.\ cit) is the  Anders\'en-Lempert theory, which builds on the fact that every polynomial vector field on $\C^n$ is a finite sum of complete algebraic vector fields. Moreover, this decomposition can be made holomorphically depending on a parameter by choosing for each homogeneous part of degree $m\geq 2$ a basis consisting of algebraic vector fields of a special kind, so called \emph{shears} and \emph{overshears}, see \cite[4.9.5]{Forstneric_book} and references there. For an algebraic proof of  this statement see \cite[Lemmas 7.5 and 7.6]{KuLo}. 

In our situation, however, there are not enough $G$-equivariant shears and overshears, as shown by the following example.
\begin{ex}
	Let $G\cong \Z_3$ be a group generated by $\zeta=\exp(2\pi i/3)$; acting on $\C^{2}$ by $\zeta\cdot (z_1,z_2)=(\zeta z_1,\zeta^{2}z_2)$. Then the $G$-equivariant shears and overshears are  $z_1^{2}p(z_1^3)\tfrac{\d}{\d z_2}$, $z_2^{2}p(z_2^3)\tfrac{\d}{\d z_1}$ and $z_2p(z_1^3)\tfrac{\d}{\d z_2}$, $z_1p(z_2^3)\tfrac{\d}{\d z_1}$ for $p\in \C^{[1]}$. Clearly, they do not span the space of all $G$-equivariant algebraic vector fields.
	
	In fact, one can show that the (non-complete) $G$-equivariant vector field $z_1^{2}z_2\tfrac{\d}{\d z_1}+z_1z_2^{2}\tfrac{\d}{\d z_2}$ does not lie in the Lie algebra generated by any complete $G$-equivariant algebraic vector fields, let alone shears and overshears. This follows from the classification of such fields, given in \cite[Theorem 5.4]{KLL}.
\end{ex}
More generally, if $G=\langle \zeta\rangle \cong \Z_d$ is a cyclic group acting on $\C^2$ by $\zeta_{d}\cdot (z_1,z_2)=(\zeta z_1,\zeta^{e}z_2)$, where $\zeta_{d}=\exp(2\pi i/d)$ and $e\in \{1,\dots,d-1\}$ is coprime to $d$, then \cite[Corollary 5.5(i)]{KLL} implies that $\C^{2}/G$ has strong algebraic density property if and only if $e|d+1$ and $e^{2}\neq d+1$. Here strong ADP for $\C^{2}/G$ means that all algebraic vector fields on $\C^{2}/G$ (equivalently, all $G$-equivariant ones on $\C^{2}$) lie in the Lie algebra generated by complete ones. 

In particular, if  $e\nmid d+1$ or $e^{2}=d+1$, e.g.\ for $(d,e)=(3,2)$, the Anders\'en-Lempert lemma does not work for $G$-equivariant vector fields; even if one allows arbitrary complete vector fields instead of just shears and overshears. This shows that the Forstneri\v{c}-L\'arusson approach cannot be applied here.

\section{Proof of Theorem \ref{thm:main}}\label{sec:proof}
	
In this section, we combine Lemma \ref{lem:KR} and Proposition \ref{prop:Oka_explicit} to remove all poles of $\psi$ in \eqref{eq:psi}.% First, we fix the following notation.

Fix an open Riemann surface $X$, a reductive group $G$ and a holomorphic map $\nu\colon G\times X\to \Autalg(\C^{2})$ defining a holomorphic family of algebraic $G$-actions on $\C^{2}$ parametrized by $X$, see Definition \ref{def:family}. Recall that by Lemma \ref{lem:coefficients}\ref{item:iso_of_schemes}, we can identify holomorphic and meromorphic maps $X\to\Autalg(\C^{2})$ with elements of $\Aut(\A^{2}_{\cO(X)})$ and $\Aut(\A^{2}_{\cM(X)})$, respectively. Thus $\nu$ defines an action of $G$ on $\Aut(\A^{2}_{\cO(X)})$.
\smallskip

We can now list the ingredients of the proof of Theorem \ref{thm:main}.

\begin{lem}\label{lem:ingredients}
	\begin{enumerate}
		\item\label{item:psi} There exists $\psi\in\Aut(\A^{2}_{\cM(X)})$ which linearizes $G$ over $\cM(X)$, see \eqref{eq:lin_R}.
		\item\label{item:alpha} Fix $\psi$ as in \ref{item:psi}. Let $Z$ be a set of poles of $\psi$, and let $\psi_{x}\in \Aut(\A^{2}_{\cM_{X,x}})$ be a germ of $\psi$ at some $x\in Z$. Then there is a germ $\alpha_{x}\in \Aut^{G}(\A^{2}_{\cM_{X,x}})$ such that $\psi_{x}\circ \alpha_{x}$ is holomorphic at $x$.
		\item\label{item:beta} In addition to \ref{item:alpha}, fix an $\epsilon>0$, a compact, $\cO(X)$-convex subset $K\subseteq X\setminus Z$ and a subset $X_{0}\subset Z$ such that $X_0\ni x\mapsto \deg \alpha_{x}\in \Nn$ is bounded. Then there is $\beta\in \Aut^{G}(\A^2_{\cM(X)})$, holomorphic on $X\setminus X_0$, such that $\tilde{\psi}\de \psi\circ \beta$ is holomorphic on $X_0\cup X\setminus Z$ and satisfies $d_{K}(\tilde{\psi},\psi)<\epsilon$. In particular, $\tilde{\psi}$ linearizes $G$ over $\cM(X)$.
	\end{enumerate}
\end{lem}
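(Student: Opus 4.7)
For part \ref{item:psi}, the plan is to apply Lemma \ref{lem:generic_linearization} directly to the field extension $\C \subseteq \cM(X)$. The holomorphic family $\nu$, via Lemma \ref{lem:coefficients}\ref{item:iso_of_schemes}, realizes $G$ as a subgroup of $\Aut(\A^2_{\cO(X)}) \subseteq \Aut(\A^2_{\cM(X)})$, and $G$ is automatically split over $\C$ since the latter is algebraically closed. For part \ref{item:alpha}, I apply Lemma \ref{lem:KR} to the equicharacteristic zero discrete valuation ring $R = \cO_{X,x}$ (whose field of fractions is $\cM_{X,x}$ and residue field is $\C$): the family being holomorphic at $x$ places $G$ inside $\Aut(\A^2_R)$, and $\psi_x$ linearizes $G$ over $\cM_{X,x}$ as the germ of $\psi$ from part \ref{item:psi}. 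These two parts are essentially direct citations.

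The substantial content is in \ref{item:beta}. First I would choose an interpolation order $r_x \geq 0$ at each $x \in X_0$ via Lemma \ref{lem:perturbing_composition}, applied to decompositions of both $\psi_x \circ \alpha_x$ and $\alpha_x^{-1} \circ \psi_x^{-1}$ (both holomorphic at $x$) into elementary and affine factors. The key property is that any $\beta_x \in \Aut(\A^2_{\cM_{X,x}})$ agreeing with $\alpha_x$ up to order $r_x$ at $x$ satisfies $\psi \circ \beta_x \in \Aut(\A^2_{\cO_{X,x}})$. Thus it suffices to construct $\beta \in \Aut^G(\A^2_{\cM(X)})$, holomorphic on $X \setminus X_0$, that interpolates each $\alpha_x$ up to order $r_x$ at $x$ and is close to $\id$ on $K$.

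The construction of $\beta$ adapts the proof of Proposition \ref{prop:Oka_explicit}. The bounded-degree hypothesis partitions $X_0 = \bigsqcup_d X_d$ into finitely many subsets indexed by the polydegree $d$ of $\alpha_x$ as an element of $\Aut^G(\A^2_{\cM_{X,x}})$. For each polydegree $d$, Lemma \ref{lem:AG-is-Oka} together with Remark \ref{rem:FG-is-Oka} encodes $\alpha_x \in \cA^G_d(\cM_{X,x})$ as a tuple of meromorphic germs $c^x \in \cM_{X,x}^{m}$. Lemma \ref{lem:Mittag-Leffler} then produces global meromorphic functions on $X$ interpolating these germ coordinates up to the prescribed order, holomorphic on $X \setminus X_d$, and close to the coordinate tuple of $\id$ on $K$. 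Reassembling via the formulas of Lemma \ref{lem:AG-is-Oka} gives $\beta_d \in \Aut^G(\A^2_{\cM(X)})$, and the composition $\beta = \beta_{d_n} \circ \cdots \circ \beta_{d_1}$, with orders chosen so that later factors do not disturb the interpolation at earlier $X_{d_j}$ (exactly as in the closing paragraph of the proof of Proposition \ref{prop:Oka_explicit}), is the required map. Smallness of $d_K(\tilde\psi, \psi)$ follows from Lemma \ref{lem:topology}\ref{item:composition}. Finally, the assertion that $\tilde\psi$ linearizes $G$ over $\cM(X)$ is immediate: since $\beta$ commutes with every $g \in G$, one has $\tilde\psi \circ g \circ \tilde\psi^{-1} = \psi \circ g \circ \psi^{-1} = \rho(g)$.

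The main obstacle I foresee is that Proposition \ref{prop:Oka_explicit} as stated accepts only holomorphic interpolation data $\phi$, whereas our prescribed germs $\alpha_x$ are meromorphic with poles at $X_0$. The remedy is mild: the proof of Proposition \ref{prop:Oka_explicit} reduces interpolation to Lemma \ref{lem:Mittag-Leffler}, which already handles meromorphic data intrinsically, so the adaptation is more bookkeeping than substance.
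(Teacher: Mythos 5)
Your proposal is correct, and for parts \ref{item:psi} and \ref{item:alpha} it coincides with the paper's proof: both are direct applications of Lemma \ref{lem:generic_linearization} with $\kappa=\C$, $\kk=\cM(X)$, and of Lemma \ref{lem:KR} with $R=\cO_{X,x}$. For part \ref{item:beta} your architecture is also the paper's --- use Lemma \ref{lem:perturbing_composition} to fix orders $r_x$ so that any $G$-equivariant germ agreeing with $\alpha_x$ to order $r_x$ makes $\psi$ holomorphic at $x$, then build such a $\beta$ globally, close to $\id$ on $K$ --- but you invoke the Oka machinery differently. The paper keeps Proposition \ref{prop:Oka_explicit} as a black box: by Lemma \ref{lem:Mittag-Leffler} it chooses $\chi\in\cO(X)$ vanishing at each $x\in X_0$ to the order $k_x$ of the pole of $\alpha_x$, so that the rescaled germs $\chi_x\cdot\alpha_x$ become holomorphic; it glues them with $\chi\cdot\id$ near $K$ into a map $\phi$ on a neighborhood of $K\cup X_0$, applies the proposition with interpolation order $r_x+k_x$, and sets $\beta\de\chi^{-1}\cdot\phi_1$. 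You instead re-open the proof of Proposition \ref{prop:Oka_explicit} and feed the meromorphic germs $\alpha_x$ directly into its coordinate-wise interpolation step, which indeed bottoms out in Lemma \ref{lem:Mittag-Leffler} and so tolerates poles. Both routes work: the paper's is shorter and needs no re-examination of Section \ref{sec:Oka_proof}, at the price of the $\chi$-twist (whose value at the points of $X_0$ one must still check fits the hypotheses of the proposition), while yours avoids the twist but inherits all the bookkeeping of Claim \ref{cl:fixed_polydegree} --- the choice of a common template $\phi^{\langle\gamma\rangle}$ for the germs of a given polydegree, the relevant chart of the bundle $\gamma^G_d$, and the final composition over the finitely many polydegrees. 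Your additional care in applying Lemma \ref{lem:perturbing_composition} to both $\psi_x\circ\alpha_x$ and its inverse, so that $\tilde\psi(x)$ is genuinely invertible rather than merely a holomorphic endomorphism, addresses a point the paper leaves implicit.
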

\begin{proof}
	\ref{item:psi} follows from Lemma \ref{lem:generic_linearization} for $\kappa=\C$, $\kk=\cM(X)$.
	
	\ref{item:alpha} follows from Lemma \ref{lem:KR} for $R=\cO_{X,x}$.
	
	\ref{item:beta} By Lemma \ref{lem:perturbing_composition}, for any $x\in X_{0}$ there is an integer $r_{x}>0$ such that for any $\tilde{\alpha}_{x}$ which agrees with $\alpha_{x}$ up to order $r_{x}$, $\psi_{x}\circ\tilde{\alpha}_{x}$ is holomorphic, too. 
	
	Let $k_{x}$ be the order of the pole of $\alpha_{x}$. By Lemma \ref{lem:Mittag-Leffler}, there is $\chi\in \cO(X)$ such that $\chi$ has a zero of order $k_{x}$ at each $x\in X_{0}$, and does not vanish elsewhere. For $x\in X_0$, let $\chi_{x}$ be a germ of $\chi$ at $x$, so $\chi_{x}\cdot \alpha_{x}\in \Aut(\A^{2}_{\cO_{X,x}})$. Since $X_{0}\subseteq X\setminus K$ is discrete, we can choose a neighborhood $\Omega$ of $K\cup X_{0}$ and a holomorphic map $\phi\in \Aut(\A^{2}_{\cO(\Omega)})$ such that $\phi|_{K}=\chi|_{K}\cdot \id$ and the germ of $\phi$ at each $x\in X_{0}$ equals $\chi_{x}\cdot\alpha_{x}$. Put $M=\sup_{x\in K}\chi(x)^{-1}$.
	
	Fix $\epsilon_{0}>0$. Proposition \ref{prop:Oka_explicit} gives a holomorphic map $\phi_{1}\in \Aut^{G}(\A^{2}_{\cO(X)})$ such that $d_{K}(\phi_{1},\phi)<\tfrac{\epsilon_{0}}{M}$, and $\phi_{1}$ agrees with $\phi$ up to order $r_{x}+k_{x}$ at every $x\in X_0$. It follows that $\beta\de \chi^{-1}\cdot \phi_{1}$ satisfies $d_{K}(\beta,\id)<\epsilon_{0}$, and $\beta$ agrees with $\alpha_{x}$ up to order $r_{x}$. By definition of $r_{x}$, the latter condition means that $\psi\circ\beta$ is holomorphic at $x$. By Lemma \ref{lem:topology}, we can choose $\epsilon_{0}>0$ such that $d_{K}(\psi\circ\beta,\psi)<\epsilon$, as claimed. 
	
	The fact that $\psi\circ \beta$ linearizes $G$ follows from the $G$-equivariance of $\beta$. Indeed, for any $g\in G$ we have $(\psi\circ \beta)\circ g \circ (\psi\circ \beta)^{-1}=\psi\circ (\beta\circ g\circ \beta^{-1})\circ \psi^{-1}=\psi\circ g \circ \psi^{-1}=\rho(g)$ for some representation $\rho\colon G\to\Gl_{2}(\C)$ as in \eqref{eq:lin_R}.
\end{proof}

\begin{proof}[Proof of Theorem \ref{thm:main}] Let $\psi$ and $\alpha_{x}$, $x\in Z$, be as in Lemma \ref{lem:ingredients}. 
	
Assume first that $G$ is not cyclic.   
	By Lemma \ref{lem:Aut_Lie-group} the map $x\mapsto \deg \alpha_{x}$ is bounded (by $1$ if $G\not\subseteq \C^{*}$, and by $v$ if $G=\C^{*}$). Thus Lemma \ref{lem:ingredients}\ref{item:beta}, applied to $K=\emptyset$ and $X_0=Z$ gives $\tilde{\psi}\in \Aut(\A^{2}_{\cO(X)})$ which linearizes $G$; that is, a holomorphic map $X\to \Autalg(\C^2)$ which linearizes $G$ algebraically. %This proves Theorem \ref{thm:main}\ref{item:G-non-cyclic}.
\smallskip

%\textbf{Case $G$ - cyclic.}  
Assume now that $G$ is cyclic.
	Choose an exhaustive family of $\cO(X)$-convex compact subsets  $\emptyset =K_{0}\subseteq K_{1}\subseteq \dots$ such that $K_{j-1}\subseteq \Int K_{j}$. %Fix $\psi$ as in Lemma \ref{lem:ingredients}\ref{item:psi}. 
	For $j\geq 1$ let $Z_{j}\de Z\cap K_{j}$ be the set of poles of $\psi$ which lie in $K_{j}$. Clearly, $Z_{j}$ is finite, and $Z=\bigcup_{j\geq 1} Z_{j}$.
	
	\begin{claim*} For every $j\geq 0$ there is a meromorphic map $\psi_{j}\colon X\to \Autalg(\C^{2})$ which linearizes $G$, such that $\psi_{j}$ is holomorphic on $Z_{j}\cup (X\setminus Z)$ and $d_{K_{j-1}}(\psi_{j},\psi_{j-1})<2^{-j}$ for $j>1$.
	\end{claim*}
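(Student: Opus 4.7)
The plan is to prove the Claim by induction on $j$, using Lemma \ref{lem:ingredients} as the only nontrivial ingredient.

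For the base case $j=0$, I would simply take $\psi_0 = \psi$. Since $K_0 = \emptyset$, we have $Z_0 = Z \cap K_0 = \emptyset$, so the required holomorphy on $Z_0 \cup (X\setminus Z)$ reduces to holomorphy on $X \setminus Z$, which holds by the definition of $Z$ as the pole set of $\psi$. The approximation condition is not asserted for $j = 0$ or $j = 1$ (indeed, $d_{K_0}$ is trivial).

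For the inductive step, suppose $\psi_{j-1}$ has been constructed, and denote its pole set by $P_{j-1}$. By the inductive hypothesis $\psi_{j-1}$ is holomorphic on $Z_{j-1} \cup (X \setminus Z)$, so $P_{j-1} \subseteq Z \setminus Z_{j-1}$. Since $Z_{j-1} = Z \cap K_{j-1}$, the compact $K_{j-1}$ is disjoint from $P_{j-1}$ and remains $\cO(X)$-convex. First, I would apply Lemma \ref{lem:ingredients}\ref{item:alpha} to each $x \in P_{j-1}$ to obtain germs $\alpha_x \in \Aut^G(\A^{2}_{\cM_{X,x}})$ such that $\psi_{j-1} \circ \alpha_x$ is holomorphic at $x$. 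Next, I would apply Lemma \ref{lem:ingredients}\ref{item:beta} with $\psi, K, X_0, \epsilon$ replaced respectively by $\psi_{j-1}, K_{j-1}, P_{j-1}\cap K_j, 2^{-j}$. The boundedness hypothesis on $x \mapsto \deg \alpha_x$ is automatic because $P_{j-1} \cap K_j \subseteq Z \cap K_j = Z_j$ is finite. The lemma then produces $\beta \in \Aut^G(\A^2_{\cM(X)})$ so that $\psi_j \de \psi_{j-1} \circ \beta$ linearizes $G$, satisfies $d_{K_{j-1}}(\psi_j, \psi_{j-1}) < 2^{-j}$, and is holomorphic on $(P_{j-1}\cap K_j) \cup (X \setminus P_{j-1})$.

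To close the induction, I would verify the inclusion $Z_j \cup (X \setminus Z) \subseteq (P_{j-1}\cap K_j) \cup (X \setminus P_{j-1})$. Any $x \in X \setminus Z$ lies in $X \setminus P_{j-1}$, since $P_{j-1} \subseteq Z$. For $x \in Z_j$, either $x \notin P_{j-1}$, giving $x \in X \setminus P_{j-1}$, or $x \in P_{j-1}$, in which case $x \in P_{j-1} \cap Z_j \subseteq P_{j-1} \cap K_j$. In particular, the pole set $P_j$ of $\psi_j$ is contained in $Z \setminus Z_j$, preserving the inductive hypothesis. The argument is essentially formal given Lemma \ref{lem:ingredients}; the only point requiring attention is the choice of a \emph{finite} $X_0 = P_{j-1} \cap K_j$ at each step, which makes the boundedness hypothesis in Lemma \ref{lem:ingredients}\ref{item:beta} automatic. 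Addressing infinitely many poles at once would require stronger control over the germs $\alpha_x$, which as noted in Remark \ref{rem:deg_alpha} is unavailable when $G$ is cyclic; this is precisely why we proceed one compact at a time, to later pass to a limit in $\Authol(\C^2)$.
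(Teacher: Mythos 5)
Your proof is correct and follows essentially the same route as the paper: induction on $j$ with $\psi_{0}=\psi$, and at each step a single application of Lemma \ref{lem:ingredients}\ref{item:beta} with $K=K_{j-1}$, $\epsilon=2^{-j}$ and a finite set of poles inside $K_{j}$ as the interpolation set. The only (harmless) difference is that the paper takes $X_{0}=Z_{j}\setminus Z_{j-1}$ whereas you take the genuine pole set $P_{j-1}\cap K_{j}\subseteq Z_{j}\setminus Z_{j-1}$, which if anything matches the hypotheses of the lemma more literally.
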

	\begin{proof}
	We argue by induction on $j$. For $j=0$ we can take $\psi_{0}=\psi$. Assume that $\psi_{j-1}$ is constructed for some $j\geq 1$. Since $Z_{j}\setminus Z_{j-1}$ is finite, Lemma \ref{lem:ingredients}\ref{item:beta} applied to $X_0=Z_{j}\setminus Z_{j-1}$, $K=K_{j-1}$ and $\epsilon=2^{-j}$ gives $\psi_{j}\de \tilde{\psi}_{j-1}$ with the required properties. 
	\end{proof}	

	Now, each $K_{j}$ admits a neighborhood $\Omega_{j}$ such that $(\psi_{i}|_{\Omega_{j}})_{i=j}^{\infty}$ is a Cauchy sequence with respect to the metric $d_{K_{j}}$. Such a sequence converges uniformly on $K_{j}$ to a holomorphic map $\phi_{j}\colon \Omega_{j}'\to\Authol(\C^{2})$ defined on some (possibly smaller) neighborhood $\Omega_{j}'$ of $K_{j}$. Because $\phi_{i}|_{K_{j}}=\phi_{j}|_{K_{j}}$ for all $i\geq j$, we infer from Lemma \ref{lem:topology} that $(\phi_{j})_{j=0}^{\infty}$ converges uniformly on compacts to a holomorphic map $\phi\colon X\to \Authol(\C^{2})$. Because each $\psi_{j}$ linearizes $G$, so does their limit $\phi$.
\end{proof}

\begin{rem}\label{rem:alg_open}
	Each $\psi_{j}$ in the above proof is a map to $\Autalg(\C^2)$. However, since that space is not complete, the limit $\phi$ of $\psi_{j}$ takes values in $\Authol(\C^2)$. We do not know if one can choose $\phi$ with values in $\Autalg(\C^2)$.% 
\end{rem}

To conclude, we remark that an analogue of Theorem \ref{thm:main} for holomorphic $G$-actions (i.e.\ with target of $\nu$ in Definition \ref{def:family} replaced by $\Authol(\C^n)$) is, for now, still out of reach. Indeed, we do not even know if the individual holomorphic $G$-action on $\C^2$ is linearizable. To illustrate the difficulty here, let us recall the construction \cite[Remark 4.7]{DK} of a candidate for a counterexample.  

\begin{ex}%[{cf.\ \cite[Remark 4.7]{DK}}]
	\label{ex:C2-nonlinearizable}
 Let $C\de \{f=0\}\subseteq \C^2$ be the image of an embedding $\C^1\into \C^2$ which is non-rectifiable, i.e.\ there is no $\alpha\in \Authol(\C^2)$ such that $\alpha(C)$ is a line. Such embeddings were constructed in \cite{FGR_non-rectifiable-C1}, cf.\ \cite[\S 2]{DK}.  Put $X= \{ (z_1,z_2,z_3) \in \C^3: f(z_1,z_2 ) = z_3^2\}$. The $\Z_2$-action $\lambda \cdot  (z_1, z_2, z_3) =  (z_1, z_2, \lambda z_3)$, $\lambda\in \{1,-1\}\cong \Z_2$ has a categorical (here = geometrical) quotient with Luna stratification $C \subset \C^2$, see \cite[\S 5.1]{Ku} for definitions. Since the embedding is non-rectifiable, this quotient cannot be biholomorphic to the Luna quotient of a linear $\Z_2$-action on $\C^2$. Therefore, if we could show that $X$ is biholomorphic to $\C^2$, the above $\Z_2$-action would give a counterexample to the holomorphic linearization problem.
\end{ex}

\bibstyle{amsalpha}
\newcommand{\etalchar}[1]{$^{#1}$}
\providecommand{\bysame}{\leavevmode\hbox to3em{\hrulefill}\thinspace}
\providecommand{\MR}{\relax\ifhmode\unskip\space\fi MR }
% \MRhref is called by the amsart/book/proc definition of \MR.
\providecommand{\MRhref}[2]{%
	\href{http://www.ams.org/mathscinet-getitem?mr=#1}{#2}
}
\providecommand{\href}[2]{#2}

\end{document}